%
%
%
%
%
%

\documentclass[a4paper, 11pt]{amsart}

\usepackage{enumerate}
\usepackage{mathrsfs}
\usepackage{float}
\usepackage{amssymb,amsmath}
\usepackage[all]{xy}
\usepackage{xcolor}
\usepackage{tikz}

\title[]{On the Mori theory and Newton-Okounkov bodies of Bott-Samelson varieties}
\author{Georg Merz, David Schmitz, Henrik Sepp\"anen} 

\thanks{The first author was supported by DFG Research Training Group 1493 ``Mathematical Structures in Modern Quantum Physics''.}

\keywords{Effective cone, Bott-Samelson variety, Mori dream space, Newton-Okounkov body}

\address{Georg Merz,
Mathematisches Institut,
Georg-August-Univer\-sit\"at G\"ot\-tingen,
Bunsenstra\ss e 3-5, 
D-37073 G\"ottingen,
Germany}
\email{georg.merz@mathematik.uni-goettingen.de}

\address{David Schmitz,
Mathematisches Institut,
Universit\"at Bayreuth,
D-95447 Bayreuth,
Germany}
\email{david.schmitz@uni-bayreuth.de}

\address{Henrik Sepp\"{a}nen,
Mathematisches Institut,
Georg-August-Univer\-sit\"at G\"ot\-tingen,
Bunsenstra\ss e 3-5, 
D-37073 G\"ottingen,
Germany}
\email{henrik.seppaenen@mathematik.uni-goettingen.de}


\newcommand{\C}{\mathbb{C}}
\newcommand{\R}{\mathbb{R}}
\newcommand{\N}{\mathbb{N}}
\newcommand{\Z}{\mathbb{Z}}
\newcommand{\Q}{\mathbb{Q}}
\renewcommand{\P}{\mathbb{P}}

\renewcommand{\phi}{\varphi}

\renewcommand{\tilde}{\widetilde}

\newcommand{\Nef}{\mbox{Nef}}
\newcommand{\Pic}{\mbox{Pic}}
\newcommand{\Eff}{\overline{\mbox{Eff}}}

\newcommand{\vol}{\mbox{vol}}
\newcommand{\bs}{\mathbb B}

\newcommand{\proj}{\mbox{Proj}}

\renewcommand{\to}{\longrightarrow}

\newtheorem{prop}{Proposition}[section]
\newtheorem{lemma}[prop]{Lemma}
\newtheorem{cor}[prop]{Corollary}
\newtheorem{thm}[prop]{Theorem}
\newtheorem{thmx}{Theorem}

\theoremstyle{definition}
\newtheorem{defin}[prop]{Definition}
\newtheorem{rem}[prop]{Remark}

\newtheorem{remark}[prop]{Remark}
\newtheorem{question}[prop]{Question}
\newtheorem{conjecture}[prop]{Conjecture}

\definecolor{plum}{RGB}{94,31,157}
\definecolor{brown}{RGB}{144,48,0}
\definecolor{magenta}{RGB}{131,7,144}
\definecolor{green}{RGB}{0,91,0}

\newcommand{\tikzcircle}[2][red,fill=red]{\tikz[baseline=-0.5ex]\draw[#1,radius=#2] (0,0) circle ;}%

\begin{document}
\begin{abstract}
 We prove that on a Bott-Samelson variety $X$ every movable divisor is nef. This enables us to consider Zariski decompositions of effective divisors, which in turn yields a 
 description of the Mori chamber decomposition of the effective cone. This amounts to information on all possible birational morphisms from $X$. Applying this result, we 
 prove the rational polyhedrality of the global Newton-Okounkov body of a Bott-Samelson variety with respect to the so called `horizontal' flag. In fact, we prove the 
 stronger property of the finite generation of the corresponding global value semigroup. 
 \end{abstract}
\maketitle

%

\section{Introduction}
Bott-Samelson varieties arise naturally from the study of flag varieties as resolutions of singularities of Schubert varieties (see e.g. \cite{D74}). Their line bundles have been studied by Lauritzen and Thomsen (\cite{LT04}), and by Anderson (\cite{and}). If $X_w$ is a Bott Samelson variety corresponding to a reduced word $w$, an explicit finite set of generators for the cone of effective/nef divisors is described by the first two authors.

In addition to the rational polyhedrality of the effective/nef cone, Anderson (\cite{and}) and 
the last two authors of this article (\cite{SS17}) showed independently that a Bott-Samelson variety $X=X_w$ is log-Fano. In particular, $X$ is a Mori dream space in the 
sense of \cite{hk}. Consequently, the Cox ring Cox$(X)=\bigoplus_{D\in \mbox{\tiny Div}(X)}H^0(X,\mathcal O_X(D))$ is 
a finitely generated $\C$-algebra.  Moreover, there are only finitely many contracting birational maps from $X$. These 
contractions correspond to a decomposition of the effective cone into finitely many subcones, the so called Mori chambers. More concretely, in \cite{hk} two big divisors $D_1$ and $D_2$ are 
called \emph{Mori-equivalent}, if their induced maps $X\dasharrow \proj(\bigoplus_mH^0(X,\mathcal{O}_X(mD_i)))$ agree. This can be rephrased by saying that their respective Minimal Model Programs (MMP) 
coincide. The closures in the N\'eron-Severi vector space $N^1(X)_\R$ of the equivalence classes are the aforementioned Mori chambers. Despite their straightforward definition, these chambers are 
very hard to determine in almost all concrete cases. 
Especially the existence of small contractions still poses a plethora of challenges one of which the two 
last authors faced when studying  global Newton-Okounkov bodies of Mori dream spaces in \cite{SS17}.
 In their work the main complication consisted in the fact that restricted volumes of divisors with small base loci, i.e. of codimension higher than one, behave unpredictably compared to 
 those of nef divisors. Another consequence of the appearance of small base loci is the fact that in order to define Zariski decompositions of effective divisors, one has to consider higher birational models 
 on which these base loci are resolved. In practice, it is usually very difficult to control these resolutions.

In this article, we apply results about Newton-Okounkov bodies from \cite{KL15} and \cite{SS17} to show that in the case of Bott-Samelson varieties such problems need not concern us. Concretely, we prove the following. 
\begin{thmx}
\label{thma}
 Let $X=X_w$ be a Bott-Samelson variety corresponding to a reduced sequence $w$. Then every movable divisor on $X$ is base-point-free, and hence 
 \begin{align*}
  \mbox{Mov}(X)=\mbox{Nef}(X).
 \end{align*}
\end{thmx}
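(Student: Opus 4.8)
It is enough to prove that every movable divisor is base-point-free: then the movable cone lies in $\Nef(X)$, hence so does its closure $\operatorname{Mov}(X)$, while the reverse inclusion $\Nef(X)\subseteq\operatorname{Mov}(X)$ always holds (the ample cone consists of movable classes), so $\operatorname{Mov}(X)=\Nef(X)$. To prove base-point-freeness I would argue by induction on the length $n$ of the reduced word $w=(s_{i_1},\dots,s_{i_n})$, using the realization of $X=X_w$ as an iterated $\P^1$-bundle. For $n\le 1$ the space is a point or a $\P^1$ and there is nothing to prove, so assume $n\ge 2$, put $w'=(s_{i_1},\dots,s_{i_{n-1}})$ --- again reduced, being a prefix of $w$ --- and let $\pi\colon X_w\to X_{w'}$ be the morphism forgetting the last factor. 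Then $X_w=\P(\mathcal E)$ for a rank-two bundle $\mathcal E$ on $X_{w'}$, the last Bott-Samelson divisor $Z:=Z_n$ is a section of $\pi$ mapping isomorphically onto $X_{w'}$, and $\Pic(X_w)=\pi^*\Pic(X_{w'})\oplus\Z[Z]$. Let $D$ be movable and write $D\equiv\pi^*B+aZ$ with $B\in\Pic(X_{w'})$, $a\in\Z$. Since the stable base locus $\mathbf B(D)$ has codimension at least two, a general fibre $f$ of $\pi$ avoids it, so $mD|_f$ is effective for $m\gg0$ and $a=D\cdot f\ge 0$. If $a=0$ then $D=\pi^*B$; as $\pi$ is a fibration, $\mathbf B(D)=\pi^{-1}\mathbf B(B)$ and $H^0(X_w,mD)=H^0(X_{w'},mB)$ for all $m$, so $B$ is movable on $X_{w'}$, hence base-point-free by induction, and therefore so is $D$.

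Assume now $a\ge 1$. Then $\O_{X_w}(D)$ has non-negative degree on the fibres of $\pi$ and $R^1\pi_*\O_{X_w}(D)=0$, so it is relatively globally generated, i.e.\ $\pi^*\pi_*\O_{X_w}(D)\to\O_{X_w}(D)$ is surjective; moreover $H^0(X_w,D)=H^0(X_{w'},\mathcal F)$, where $\mathcal F:=\pi_*\O_{X_w}(D)$ is locally free of rank $a+1$ on $X_{w'}$. It follows that $D$ is base-point-free as soon as $\mathcal F$ is globally generated on $X_{w'}$, since pulling back a generating set of global sections of $\mathcal F$ and composing with the surjection above generates $\O_{X_w}(D)$.

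To analyse $\mathcal F$, recall that by the construction of $X_w$ the bundle $\mathcal E$ is an extension $0\to A\to\mathcal E\to C\to 0$ of line bundles pulled back from a flag variety. Hence $\operatorname{Sym}^a\mathcal E$ --- and so $\mathcal F$, which is $\operatorname{Sym}^a\mathcal E$ twisted by a fixed line bundle from $X_{w'}$ --- carries a filtration with line-bundle quotients $Q_0,\dots,Q_a$; a short computation identifies the top quotient as $Q_0=D|_Z$, the bottom subsheaf as $Q_a=B$, and gives $Q_i=\tfrac{i}{a}\,Q_a+\tfrac{a-i}{a}\,Q_0$ in $N^1(X_{w'})_{\R}$. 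Therefore, provided the two extreme classes $D|_Z$ and $B$ are nef on $X_{w'}$, every $Q_i$ is nef; since on a Bott-Samelson variety a nef line bundle is globally generated and has vanishing higher cohomology (see \cite{LT04}), each $Q_i$ is then globally generated with $H^1(X_{w'},Q_i)=0$, and a descending induction along the filtration --- lifting sections through the short exact sequences, which is possible exactly because the relevant $H^1$'s vanish --- shows that $\mathcal F$ is globally generated. Thus $D$ is base-point-free, completing the inductive step once the following claim is proved.

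The crux is: \emph{if $D\equiv\pi^*B+aZ$ is movable on $X_w$, then $D|_Z$ and $B$ are nef on $X_{w'}$.} This is the one place where the special geometry of Bott-Samelson varieties must enter --- movability of $D$ on $X_w$ need not restrict to movability on the section $Z\cong X_{w'}$ --- and I expect it to be the main obstacle. I would attack it with the two inputs advertised in the introduction. First, using the explicit generators of $\Nef(X_{w'})$ and $\Eff(X_{w'})$ from the work of the first two authors: one writes $D|_Z$ and $B$ in the distinguished basis of $\Pic(X_{w'})$ furnished by the lower Bott-Samelson divisors and translates the movability of $D$ --- i.e.\ the vanishing of the asymptotic multiplicities $\operatorname{ord}_{Z_j}(D)$ along all the $Z_j$ --- into linear inequalities on the coefficients, then checks against the explicit cones that these are exactly the defining inequalities of $\Nef(X_{w'})$. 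Second, using the Newton--Okounkov body criteria of \cite{KL15}: there, nefness of $D$ is equivalent to the origin lying in every Newton--Okounkov body of $D$, and computing these bodies along the horizontal flag (built from the iterated sections of the $\P^1$-bundle tower), whose value semigroup is controlled by that tower as in \cite{SS17}, one sees that the origin is attained precisely because the asymptotic multiplicities along the $Z_j$ vanish, i.e.\ precisely because $D$ is movable. Carrying out this cone-and-semigroup bookkeeping in the necessary generality is the real work; everything else is formal.
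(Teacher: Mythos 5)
Your overall strategy is genuinely different from the paper's, and the formal skeleton of your argument is sound, but there is a real gap at exactly the point you flag as the ``crux,'' and that gap is not merely computational.

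The paper also inducts on $n$, but it uses the \emph{other} decomposition: the projection $\pi\colon X_w\to P_1/B=\P^1$ that forgets the \emph{first} factor, whose general fibre $Y$ is the Bott--Samelson variety for the word with the first letter deleted. The $P_1$-action on $X_w$ is equivariant for this $\pi$, so the stable base locus $\mathbb B(D)$ is $P_1$-invariant, hence $\mathbb B(D)=P_1(\mathbb B(D)\cap Y)$ with matching codimensions; consequently $D|_Y$ is automatically movable, the inductive hypothesis gives $\mathbb B(D|_Y)=\emptyset$, and the remaining content is to propagate this back up to $X_w$, which is done via the restricted-volume identity of \cite{SS17} and Lemma~\ref{lemma:baseloci} (built on \cite{KL15}). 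The $P$-equivariance is what lets movability on $X$ descend to the fibre, and it is essential.

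You instead project to $X_{w'}$ by forgetting the \emph{last} factor and work with the section $Z=E_n$ rather than a fibre. Your reduction of base-point-freeness of $D$ to global generation of $\mathcal F=\pi_*\O(D)$, the filtration of $\mathcal F$ with line-bundle quotients $Q_0,\dots,Q_a$, the convexity of the $Q_i$'s in $N^1$, and the $H^1$-vanishing argument from \cite{LT04} to lift sections along the filtration are all fine as far as they go. The trouble is the reduction to the unproved claim that movability of $D$ forces $D|_Z$ and $B$ to be nef on $X_{w'}$. Unlike a fibre of the first projection, $Z=E_n$ is a \emph{fixed} divisor --- it is the unique member of its linear system and does not move in a family (the paper makes exactly this horizontal/vertical contrast explicit when discussing the two flags). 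So there is no reason for $\mathbb B(D)$ to meet $Z$ in the expected codimension: a priori $\mathbb B(D)$ could be entirely contained in $Z$, in which case $D|_Z$ would have a divisorial base locus and need not even be movable, let alone nef, on $X_{w'}$. Likewise, $B$ appears as the \emph{sub}-line-bundle (not the quotient) in your filtration of $\mathcal F$, so global generation of $\mathcal F$ would not force $B$ to be nef, and movability of $D$ gives no direct handle on $B$. The two routes you sketch for the crux --- explicit cone bookkeeping, and Newton--Okounkov criteria via the horizontal flag --- are not carried out, and in particular the Newton--Okounkov route runs into the same section-vs-fibre issue, since the key input used by the paper (the identity $\vol_{X|Y}(D)=\vol_Y(D|_Y)$ from \cite{SS17}) is a statement about the moving fibre of the vertical flag and has no counterpart here; the horizontal-flag cohomology vanishing of Lemma~\ref{lemcohomology} only applies once $D$ is already known to be nef, which is what you are trying to prove. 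So as written the argument is circular at the crux, and the missing idea --- some mechanism to transport movability of $D$ across the fixed divisor $E_n$ --- is precisely what the paper's $P$-equivariance supplies on the other projection.
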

At a first glance it may seem that, by realizing this fact, the struggle in \cite{SS17} to deal with small modifications on Bott-Samelson varieties 
was in vain. Note however that in order to prove the above theorem we rely on a result from  \cite{SS17} for which the existence of small contractions could not be ruled out from the outset.  

Once Theorem \ref{thma} is established, we have access to Zariski decompositions, which then just consist of the decomposition of a big divisor into its \mbox{($\Q$--)movable} and ($\Q$--)fixed parts. 
This enables us to describe the pseudo-effective cone of a Bott-Samelson variety in detail and in particular to give criteria for divisor classes to span a common Mori chamber. It will turn 
out that Mori chambers are uniquely determined by stable base loci occurring in their interior. More concretely, we prove the following.
\begin{thmx}
Let $X=X_w$ be a Bott-Samelson variety for a reduced word $w$. Then each Zariski chamber defines a Mori chamber and vice versa.

\end{thmx}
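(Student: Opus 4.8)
The plan is to show that the finite collections of maximal Mori chambers and of Zariski chambers of $X$ coincide, as sets of subcones of $N^1(X)_\R$, by comparing the data each chamber induces on the big divisors in its interior.

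First I record the consequences of Theorem~\ref{thma}. Since $X$ is a Mori dream space with $\mbox{Mov}(X)=\Nef(X)$, the identity is its only small $\Q$-factorial modification, so in the Hu--Keel decomposition every birational contraction is already a morphism $f\colon X\to X'$ onto a $\Q$-factorial Mori dream space, with Mori chamber $\mathcal C_f=f^*\Nef(X')+\sum_{E\in\operatorname{Exc}(f)}\R_{\ge 0}E$, the sum over the prime $f$-exceptional divisors. Moreover, by Theorem~\ref{thma} each big $\Q$-divisor $D$ admits a Zariski decomposition $D=P_D+N_D$ on $X$ itself: $P_D$ is nef, hence (as $X$ is a Mori dream space) semiample and big, $N_D\ge 0$, $P_D$ is the largest nef divisor $\le D$ with $h^0(\lfloor mD\rfloor)=h^0(\lfloor mP_D\rfloor)$ for all $m$, and the stable base locus $\mathbf B(D)$ equals $\operatorname{Supp}(N_D)$. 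Let $\phi_D\colon X\to X_D$ be the ample model of $P_D$, a birational morphism onto one of the $\Q$-factorial models above with $P_D=\phi_D^*A_D$, $A_D$ ample; since $h^0(\lfloor mD\rfloor)=h^0(\lfloor mP_D\rfloor)$, the rational map $X\dashrightarrow\proj\bigoplus_m H^0(X,mD)$ is $\phi_D$, so the Mori chamber of $D$ is $\mathcal C_{\phi_D}$. Finally, for a set $\mathcal E$ of prime divisors put $\Sigma_{\mathcal E}=\{D\text{ big}:\operatorname{Supp}(N_D)=\mathcal E\}$, whose closure is the Zariski chamber attached to $\mathcal E$.

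Next I would prove the two mutually reverse inclusions. Given $f\colon X\to X'$ and $D$ in the interior of $\mathcal C_f$, write $D=f^*A+\sum_{E\in\operatorname{Exc}(f)}a_EE$ with $A$ ample and all $a_E>0$. Then $f^*A$ is nef and $\le D$, and $h^0(\lfloor mD\rfloor)=h^0(\lfloor mf^*A\rfloor)$ because the difference is $f$-exceptional; writing $P_D=f^*A+G$ with $G\ge 0$, the divisor $G$ is $f$-exceptional and $f$-nef (as $P_D$ is nef and $f^*A$ is $f$-trivial), hence $G=0$ by the negativity lemma. Thus $P_D=f^*A$, $N_D=\sum a_EE$, $\operatorname{Supp}(N_D)=\operatorname{Exc}(f)$, so $\mathrm{int}\,\mathcal C_f\subseteq\Sigma_{\operatorname{Exc}(f)}$ and $\mathcal C_f\subseteq\overline{\Sigma_{\operatorname{Exc}(f)}}$. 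For the other direction the key point is that $\operatorname{Supp}(N_D)\subseteq\operatorname{Exc}(\phi_D)$ for every big $D$, whence $D=\phi_D^*A_D+N_D\in\mathcal C_{\phi_D}$: pushing $N_D$ forward by $\phi_D$ and comparing sections, $h^0(\lfloor mA_D\rfloor)=h^0(\lfloor m(\phi_D)_*D\rfloor)=h^0(\lfloor m(A_D+(\phi_D)_*N_D)\rfloor)$, but $A_D$ is ample and the volume of an ample class strictly increases upon adding a nonzero effective divisor, so $(\phi_D)_*N_D=0$, i.e. $N_D$ is $\phi_D$-exceptional. (Equivalently one can argue directly on $X$: for $E\subseteq\operatorname{Supp}(N_D)$, monotonicity of the volume forces $\vol(P_D+\varepsilon E)=\vol(P_D)$ for small $\varepsilon>0$, so $P_D^{\,n-1}\cdot E=0$ and $E\subseteq\mathbf B_+(P_D)=\operatorname{Exc}(\phi_D)$.)

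Combining the two inclusions: every Mori chamber lies inside a Zariski chamber, and a full-dimensional $\Sigma_{\mathcal E}$ meets the interior of some $\mathcal C_f$, which then has $\operatorname{Exc}(f)=\mathcal E$; more generally each Zariski chamber $\overline{\Sigma_{\mathcal E}}$ is a finite union of Mori chambers $\mathcal C_{f_1},\dots,\mathcal C_{f_k}$, all with $\operatorname{Exc}(f_j)=\mathcal E$. It remains to show $k=1$: that $f\mapsto\operatorname{Exc}(f)$ is injective on the birational morphisms from $X$, equivalently that two distinct Mori chambers are always separated by the stable base locus of the big divisors in their interior. This is the crux, and I expect it to be the main obstacle: on an arbitrary Mori dream space with $\mbox{Mov}=\Nef$ it is not automatic --- two birational morphisms from $X$ onto targets of different dimensions could in principle contract exactly the same prime divisors, and then their distinct Mori chambers would both sit inside one Zariski chamber --- whereas for a Bott--Samelson variety it follows from the explicit description of $\overline{\Eff}(X_w)$ and the criteria for two divisor classes to share a Mori chamber obtained earlier in this section. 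Granting it, each Zariski chamber equals the unique $\mathcal C_{f_1}$ with $\operatorname{Exc}(f_1)=\mathcal E$, and since by the first inclusion every Mori chamber arises this way, the two chamber decompositions agree. The more routine ingredients --- the identification of the Zariski negative part of a divisor in $\mathrm{int}\,\mathcal C_f$ with its $f$-exceptional part, and the inclusion $\operatorname{Supp}(N_D)\subseteq\operatorname{Exc}(\phi_D)$ --- are exactly where $\mbox{Mov}=\Nef$ enters decisively, in that the relevant Zariski decompositions live on $X$ and no higher birational model is needed.
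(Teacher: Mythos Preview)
Your outline is correct and follows essentially the same route as the paper. Both arguments show that the interior of a Mori chamber $\mathcal C_f$ lies in the Zariski chamber indexed by $\operatorname{Exc}(f)$---you do this by computing the Zariski decomposition of $D\in\mathrm{int}\,\mathcal C_f$ directly via the negativity lemma, whereas the paper simply cites Okawa's result that strongly Mori-equivalent big divisors share their stable base locus---and both then invoke the Hu--Keel shape $\mathcal C_f=f^*\Nef(X')+\operatorname{Cone}(\operatorname{Exc}(f))$ with $f^*\Nef(X')$ a face of $\Nef(X)$, together with the simplex description $\Sigma_{\mathcal E}=F_{\mathcal E}+\operatorname{Cone}(\mathcal E)$ established earlier in the section, to conclude $\mathcal C_f=\Sigma_{\mathcal E}$. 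One correction to your deferral: what is ``obtained earlier in this section'' is the structure of the \emph{Zariski} chambers, not Mori-chamber criteria; that simplex description is exactly what delivers your injectivity of $f\mapsto\operatorname{Exc}(f)$, since then $f^*\Nef(X')\subseteq\Sigma_{\mathcal E}\cap\Nef(X)=F_{\mathcal E}$ and both are $(n-|\mathcal E|)$-dimensional faces of the simplicial cone $\Nef(X)$, hence equal. (Your aside about ``targets of different dimensions'' is a slip---birational targets have the same dimension; the genuine worry is two contractions with the same exceptional divisors but different images of those divisors, and it is the simpliciality of $\Nef(X)$ on a Bott--Samelson variety that rules this out.)
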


As an application of Theorem \ref{thma}, we turn to Newton-Okounkov bodies on Bott-Samelson varieties and the question of finite generation of the semigroups of valuation vectors coming up in the construction. 

The question of finite generation of this semigroup has been intensely studied in the last years since D. Anderson's observation in \cite{A13} that the finite 
generation of the value semigroup of an ample divisor implies the existence of a toric degeneration and the  construction of a related integrable system in this situation in \cite{HaKa}. 

Also the study of Newton-Okounkov bodies on Bott-Samelson varieties has recently become an active field of research.
In \cite{A13} a particular Bott-Samelson variety is considered as an example. A more thorough analysis of Newton-Okounkov bodies for Bott-Samelson varieties was initiated by Kaveh in \cite{K11}, where he showed that 
Littelmann's string polytopes (cf. \cite{Li98}) can be realized as Newton-Okounkov bodies for divisors with respect to a certain valuation. This valuation is however not defined by a flag of subvarieties in terms of order of vanishing. 
In \cite{HY}, the authors describe Newton-Okounkov bodies of Bott-Samelson varieties for divisors $D$ satisfying a certain condition.
In contrast to Kaveh's work, they use a flag to define the valuation, which we will call the `horizontal' flag. In particular they prove the finite generation of the value semigroup in this context. In \cite{SS17}, the rational polyhedrality of the global Newton-Okounkov with respect to the so-called `vertical' flag was proven.

Although both the `vertical'- and the `horizontal' flag consists of Bott-Samelson varieties, their embeddings into $X$ are very different: whereas the divisor 
$Y_1$ in the `vertical' flag is a fibre of a bundle $X \to \P^1$, and thus moves in a natural family, the divisor in the horizontal flag is  fixed, i.e., it is the only element in its linear system.

In this article, we consider the `horizontal' flag and generalize the results of \cite{HY} to all effective divisors $D$. We prove the rational polyhedrality of the global Newton-Okounkov 
body, similarly as in \cite{SS17}. Note, however, that our proof will be substantially less technical since we can make use of Theorem A and the fact, 
derived in \cite{LT04} (see Lemma \ref{lemcohomology}), that the restriction morphism of global sections of nef divisors to  $Y_1$ is surjective. This property is significantly stronger than 
the corresponding identity of restricted volumes $\text{vol}_{X|Y_1}(D)=\text{vol}_{Y_1}(D|Y_1)$ which holds for the `vertical' flag.
Indeed, it will give us the following result.
\begin{thmx}
\label{thmc}
Let $X=X_w$ be a Bott-Samelson variety for a reduced word $w$, and let $Y_\bullet$ be the horizontal flag. Then, the semigroup
\begin{align*}
\Gamma_{Y_\bullet}(X_w):=\{(\nu(s),D) \ \vert \ D\in \Pic(X_w), \ s\in H^0(X,\mathcal{O}_X(D))\setminus\{0\} \}
\end{align*}
is finitely generated.
\end{thmx}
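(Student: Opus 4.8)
The plan is to prove finite generation of the global value semigroup $\Gamma_{Y_\bullet}(X_w)$ by induction on the length $\ell(w)$ of the word, using the structure of the horizontal flag $Y_\bullet$: the top piece $Y_1\subset X_w$ is itself a Bott-Samelson variety $X_{w'}$ for the truncated word $w'$ (of length $\ell(w)-1$), and the remaining flag $Y_1\supset Y_2\supset\cdots$ restricts to the horizontal flag on $X_{w'}$. The base case $\ell(w)=0$ (a point) is trivial, and $\ell(w)=1$ (where $X_w\cong\P^1$) is immediate. For the inductive step, the first task is to separate the valuation vector $\nu(s)=(\nu_1(s),\nu')$ into its first coordinate $\nu_1(s)=\operatorname{ord}_{Y_1}(s)$ and the remaining coordinates $\nu'$, which by definition of the flag only depend on the restriction of $s$ (suitably twisted) to $Y_1=X_{w'}$.

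The key step is then the following. Fix a divisor $D$ and write an effective divisor $D$ via its Zariski decomposition, which by Theorem A (so $\mathrm{Mov}(X)=\mathrm{Nef}(X)$) is simply $D = P_D + N_D$ with $P_D$ $\Q$-nef and $N_D$ the $\Q$-fixed part; in particular the fixed part is, up to scaling, supported on the finitely many generators of $\Eff(X_w)$ described in the Merz--Schmitz work. For a section $s$ of $\O_X(D)$ one has $\operatorname{ord}_{Y_1}(s)\geq \operatorname{ord}_{Y_1}(N_D)=:c$, and after dividing out the fixed component one reduces to studying sections of the nef divisor $P_D$ together with the single new generator coming from the equation of $Y_1$ when $Y_1$ appears in $\mathrm{Supp}(N_D)$. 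Now the crucial input is Lemma \ref{lemcohomology} (from \cite{LT04}): for a nef divisor $P$ the restriction map $H^0(X_w,\O_X(P))\to H^0(Y_1,\O_X(P)|_{Y_1})$ is surjective. This lets us fit the first coordinate into a short-exact-sequence-type argument: the sub-semigroup of $\Gamma$ with $\nu_1=\operatorname{ord}_{Y_1}(N_D)$ (minimal possible, i.e. sections not vanishing on $Y_1$ beyond the forced amount) surjects onto the global value semigroup $\Gamma_{Y_\bullet\cap Y_1}(X_{w'})$ of the smaller Bott-Samelson variety, which is finitely generated by the induction hypothesis; increasing $\nu_1$ by one corresponds to multiplying by the fixed section cutting out $Y_1$, which contributes one further generator; and the finitely many generators of $\Eff(X_w)$ control the remaining fixed-part directions.

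The main obstacle I expect is bookkeeping the interaction between the two mechanisms producing generators of $\Gamma$: the "vertical" direction (the first coordinate $\nu_1$, governed by powers of the defining section of $Y_1$) and the "horizontal" directions inherited from $X_{w'}$, while simultaneously allowing $D$ itself to vary over all of $\Pic(X_w)$ rather than being fixed. Concretely, one must check that a finite generating set can be assembled from (i) a lift to $X_w$ of a finite generating set of $\Gamma_{Y_\bullet\cap Y_1}(X_{w'})$ — possible by the surjectivity in Lemma \ref{lemcohomology} applied to the nef/movable parts — (ii) the section defining $Y_1$, and (iii) sections cutting out the other finitely many extremal effective divisors of $X_w$; and that every $(\nu(s),D)$ decomposes as a finite $\N$-combination of these once one peels off the $\Q$-fixed part using Theorem A. The surjectivity statement is what makes this work in the horizontal case without passing to higher birational models: it is exactly the strengthening of the restricted-volume identity that was unavailable in the vertical setting of \cite{SS17}, so the proof is genuinely shorter than its vertical counterpart.
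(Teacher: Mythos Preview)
Your proposal assembles the right ingredients—induction along the horizontal flag, Theorem~A to get Zariski decompositions, and Lemma~\ref{lemcohomology} for surjectivity of restriction—and your ``peeling'' description is essentially the content of the paper's Proposition~\ref{propsemigroup}. However, the paper does \emph{not} attempt to exhibit an explicit finite generating set. Instead it proceeds in two steps: first it proves (Theorem~\ref{thmglobal}) that the closed cone $\overline{\mathrm{Cone}(\Gamma_{Y_\bullet}(X_w))}=\Delta_{Y_\bullet}(X_w)$ is rational polyhedral; then it uses the peeling identity inductively (Corollary~\ref{corvalpoints}) to show that every rational point of this cone has an integer multiple lying in $\Gamma_{Y_\bullet}(X_w)$, so that the cone is already closed and hence itself rational polyhedral. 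Finite generation of $\Gamma_{Y_\bullet}(X_w)$ then follows from a general fact about submonoids of $\Z^N$ with finitely generated cone (\cite[Corollary~2.10]{BG09}), with no generators ever written down.

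Your direct route has a genuine gap precisely at the ``bookkeeping'' step you flag. Suppose $\nu_1(s)=0$, so that $s$ restricts to a nonzero section on $Y_1$ and $(\nu_{{Y_1}_\bullet}(s|_{Y_1}),\,D|_{Y_1})$ lies in $\Gamma(X_{w'})$. By induction this equals $\sum n_j(\nu'_j,D'_j)$ for your chosen generators on $X_{w'}$, and you lift each $(\nu'_j,D'_j)$ to some $((0,\nu'_j),\tilde D_j)\in\Gamma(X_w)$. But then $\sum n_j\tilde D_j$ agrees with $D$ only after restriction to $Y_1$: the difference $D-\sum n_j\tilde D_j$ lies in the rank-one kernel of $\Pic(X_w)\to\Pic(Y_1)$, and you have no mechanism forcing this discrepancy to be a \emph{nonnegative} combination of your generators (indeed, a generator of that kernel is typically not effective). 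So the asserted $\N$-decomposition of $(\nu(s),D)$ does not follow. The same issue resurfaces if you try to use the $\Q$-Zariski decomposition to split $D$ first: clearing denominators only shows that a multiple of $(\nu(s),D)$ lies in the sub-semigroup your generators span, not $(\nu(s),D)$ itself. The paper's indirect argument via rational polyhedrality of the cone plus \cite{BG09} is exactly what absorbs both of these difficulties.
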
 
To our best knowledge, apart from the toric case, no known examples of varieties admitting a finitely generated global semigroup $\Gamma_{Y_\bullet}(X)$ have been studied 
in the literature so far.  It also remains unclear to us whether the above theorem also holds with the `vertical' flag. 

Note also that our result goes in line with the recent work of Postinghel 
and Urbinati (\cite{PU16}). Apart from also 
showing that for each Mori dream space $X$ there is a flag on a birational model of $X$ such that the  corresponding global Newton-Okounkov body is rational polyhedral, they prove the finite 
generation of the value semigroup $\Gamma(D)$ of any big divisor $D$.
However, their work does not imply that the global semigroup $\Gamma(X)$ is finitely generated.

It is a well-known fact that the finite generation of the value semigroup, does not induce a toric degeneration to a  \emph{normal} toric variety.
This is directly related to the fact that the value semigroup itself need not be normal despite being finitely generated. Hence, it would be desirable to find a 
criterion for normality of value semigroups. We prove a sufficient criterion in the case of Bott-Samelson varieties. Namely, if the Zariski decomposition of any integral effective divisor
on $X$ is integral, then the normality of the global value semigroup $\Gamma_{Y_\bullet}(X)$ with respect to the `horizontal' flag follows. Thus, in this case any ample divisor yields a 
degeneration to a normal toric variety.

We can generalize the picture described so far to the setting of flag varieties and Schubert varieties contained therein. Given a parabolic 
subgroup $P\subset G$ containing $B$, and a reduced expression $w=(s_1,\dots,s_n)$ such that we have a birational resolution
$
	p: X_w\to Z_{\overline w}
$
of the Schubert variety corresponding to $\overline w$, the `horizontal' flag on $X_w$ induces a $\Z^n$-valued valuation-like function $\nu$ on $Z_{\overline w}$. Then, 
the following is a consequence of Theorem \ref{thmc}.
\begin{thmx}
	Let $Z_{\overline{w}} \subseteq G/P$ be the Schubert variety for the reduced word $w$, then the global semigroup $\Gamma_\nu(Z_{\overline{w}})$ is finitely generated. In 
	particular, $\Delta_\nu(Z_{\overline{w}})$ is rational polyhedral.
\end{thmx}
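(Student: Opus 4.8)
The plan is to deduce the statement from Theorem~\ref{thmc} by transporting sections and valuations along the resolution $p\colon X_w\to Z_{\overline{w}}$. Since $p$ is a proper birational morphism from the smooth variety $X_w$ and, by a classical theorem, the Schubert variety $Z_{\overline{w}}$ is normal, we have $p_*\O_{X_w}=\O_{Z_{\overline{w}}}$; by the projection formula this gives $p_*p^*L=L$ for every $L\in\Pic(Z_{\overline{w}})$, so that
\[
 p^*\colon H^0(Z_{\overline{w}},L)\to H^0(X_w,p^*L)
\]
is an isomorphism, compatible with multiplication of sections, and $p^*\colon\Pic(Z_{\overline{w}})\to\Pic(X_w)$ is injective. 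As $p$ is birational, $\C(Z_{\overline{w}})=\C(X_w)$, and the horizontal flag on $X_w$ induces the function $\nu$ on $Z_{\overline{w}}$ precisely so that $\nu(s)=\nu_{Y_\bullet}(p^*s)$ for every section $s$, where $\nu_{Y_\bullet}$ is the valuation of the horizontal flag on $X_w$.

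With this in place, $(\nu(s),L)\mapsto(\nu_{Y_\bullet}(p^*s),p^*L)$ is an injective homomorphism of semigroups $\Gamma_\nu(Z_{\overline{w}})\hookrightarrow\Gamma_{Y_\bullet}(X_w)$, and since every global section of $p^*L$ is a pullback $p^*s$, its image equals
\[
 \Gamma_{Y_\bullet}(X_w)\cap\bigl(\Z^n\times p^*\Pic(Z_{\overline{w}})\bigr).
\]
This is the intersection of the finitely generated semigroup $\Gamma_{Y_\bullet}(X_w)$ (Theorem~\ref{thmc}) with a subgroup of the finitely generated abelian group $\Z^n\times\Pic(X_w)$ in which it sits, so it remains to prove the elementary fact that $S\cap H$ is finitely generated whenever $S$ is a finitely generated subsemigroup of a finitely generated abelian group $\Lambda$ and $H\le\Lambda$ is a subgroup. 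To see this, pick generators $a_1,\dots,a_k$ of $S$, let $\phi\colon\Z^k\to\Lambda$ send the standard generators to the $a_i$, and set $G:=\phi^{-1}(H)$. Then $S=\phi(\N^k)$ and $S\cap H=\phi(\N^k\cap G)$; since $G$ is a subgroup of $\Z^k$, the set $\N^k\cap G=(\R_{\ge0}^k\cap G_{\R})\cap G$ consists of the lattice points, in the lattice $G$, of the rational polyhedral cone $\R_{\ge0}^k\cap G_{\R}$, hence is a finitely generated monoid by Gordan's lemma, and so is its image $S\cap H$. Thus $\Gamma_\nu(Z_{\overline{w}})$ is finitely generated; the rational polyhedrality of $\Delta_\nu(Z_{\overline{w}})$ follows at once, since $\Delta_\nu(Z_{\overline{w}})$ is the closed convex cone generated by $\Gamma_\nu(Z_{\overline{w}})$ (or a slice thereof), which — $\Gamma_\nu(Z_{\overline{w}})$ being finitely generated — is rational polyhedral.

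Modulo Theorem~\ref{thmc}, the argument is essentially formal; its two non-automatic inputs are (i) the equality of $\Gamma_\nu(Z_{\overline{w}})$ with $\Gamma_{Y_\bullet}(X_w)\cap(\Z^n\times p^*\Pic(Z_{\overline{w}}))$, which hinges on the surjectivity of $p^*$ on global sections and hence on the normality of Schubert varieties, together with the routine but necessary check that $\nu$ on $Z_{\overline{w}}$ really is $\nu_{Y_\bullet}\circ p^*$ on sections; and (ii) the combinatorial lemma on intersections of finitely generated semigroups with subgroups — elementary, but not a consequence of Theorem~\ref{thmc} alone, since $\Gamma_{Y_\bullet}(X_w)$ need not be saturated and so one cannot simply intersect its cone with $H_{\R}$ and take lattice points. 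I would expect step (i), and specifically making the identification of $\nu$ exact rather than merely approximate, to be the part requiring the most care.
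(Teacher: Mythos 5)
Your proposal is correct, and it starts exactly where the paper does: normality of Schubert varieties gives $p_*\mathcal O_{X_w}=\mathcal O_{Z_{\overline w}}$, hence $p^*$ is an isomorphism on global sections and injective on Picard groups, so $\Gamma_\nu(Z_{\overline w})$ is identified with the sub-semigroup of $\Gamma_{Y_\bullet}(X_w)$ sitting over $p^*\Pic(Z_{\overline w})$. Where you diverge is in the final finite-generation step. The paper says this follows \emph{``completely analogously to the proof of Theorem~\ref{thmglobalsem}''}, i.e.\ by re-running the argument that the cone of the semigroup is already closed (via Corollary~\ref{corvalpoints}) and then invoking \cite[Cor.\ 2.10]{BG09}. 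You instead deduce it as a genuinely formal consequence of Theorem~\ref{thmc}: $\Gamma_\nu(Z_{\overline w})=\Gamma_{Y_\bullet}(X_w)\cap(\Z^n\times p^*\Pic(Z_{\overline w}))$ is the intersection of a finitely generated monoid with a subgroup, and such intersections are finitely generated — proved by pulling back to $\N^k$ via the generator map $\phi$ and applying Gordan's lemma inside the sublattice $\phi^{-1}(H)$. This is a cleaner ending and actually delivers on the paper's own promise that Theorem D ``is a consequence of Theorem C'': you use Theorem C as a black box rather than re-deriving the underlying closedness statement. You also correctly anticipate and avoid the pitfall that one cannot simply slice the polyhedral cone and take lattice points, because the global value semigroup need not be saturated. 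One minor remark: the paper intersects with the monoid $\Gamma(p^*D_1,\dots,p^*D_k)$ generated by the pulled-back effective generators, while you intersect with the full subgroup $p^*\Pic(Z_{\overline w})$; these yield the same semigroup since sections vanish for non-effective classes, so there is no discrepancy, but the identification is worth stating.
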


Consequently, for any partial flag variety $G/P$ the global semigroup $\Gamma_{\nu}(G/P)$ is finitely generated.

In order to illustrate the results of this paper, we apply them to two concrete Bott-Samelson varieties given as incidence varieties. Their Mori chamber structure is described 
in Sections \ref{s:ex1}, whereas the generators of their global value semigroups and their global Newton-Okounkov bodies are determined in Section \ref{s:ex2}. 

\section*{Acknowledgements} We thank Marcel Maslovari\'c for valuable discussions.

\section{Preliminaries and notation}\label{secprelim}

\subsection{Bott-Samelson varieties}
Let $G$ be a connected and simply connected reductive complex linear group,
let $B \subseteq G$ be a Borel subgroup, and let $W$ be the Weyl group of $G$.
Then for a sequence $w=(s_1,\dots,s_n)$ in $W$, we can associate the Bott-Samelson variety $X_w$ as follows.
Let $P_i$ be the  minimal parabolic subgroup containing $B$ corresponding to the simple reflection $s_i$.
Let $P_w:=P_1 \times \cdots \times P_n$ be the product of the corresponding parabolic subgroups, 
and consider the right action of $B^n$ on $P_w$ given by
\begin{align*}
(p_1,\ldots, p_n)(b_1,\ldots, b_n):=(p_1b_1, b_1^{-1}p_2b_2,b_2^{-1}p_3b_3,\ldots, b_{n-1}^{-1}p_nb_n).
\end{align*}
The Bott-Samelson variety $X_w$ is the quotient
\begin{align*}
X_w:=P_w/B^n=P_1 \times^B (P_2 \times^B \times \cdots \times^B P_n).
\end{align*}
We can represent points in $X_w$ by  tuples $[(p_1,\dots,p_n)]$ for $p_i\in P_i$ and the square brackets denote taking the class in the quotient.
For more details on this construction we refer to \cite{LT04} or \cite{SS17}. 

Originally, Damazure constructed these varieties for a sequence $w$ which is reduced \cite{D74}. In this case, he proved that $X_w$ is a desingularization of the Schubert variety $Z_{\overline{w}}$. In this article, whenever we talk about a Bott-Samelson variety $X_w$, the sequence $w$ will be assumed to be reduced.

Let now $X=X_w$ be a Bott-Samelson variety of dimension $n$. Then $\Pic(X)\cong \mathbb{Z}^n$. 
There are two important bases of $\Pic(X)$.
The first one is called the \emph{effective basis}. It consists of prime divisors $E_1,\dots, E_n$, which can be defined inductively. Justifying its name, the cone spanned by this basis in $N^{1}(X)_{\R}$ is the  cone of effective divisor classes.
The second basis, which we call the \emph{$\mathcal{O}(1)$-basis}, will be denoted by $D_1,\dots, D_n$. These divisors also generate $\Pic(X)$ as a group, whereas the cone they span coincides with the nef cone $\Nef(X)$. 
Note that since $\Pic(X)=N^1(X)$, we will not explicitly distinguish between the divisor $D$ and its class $[D]$.

\subsection{Newton-Okounkov bodies}
For the theory of Newton-Okounkov bodies  we follow the notation and conventions of \cite{LM09}. 
In particular, a flag of irreducible subvarieties $Y_{\bullet} : X = Y_{0} \supseteq Y_{1} \supseteq \dots \supseteq Y_{n-1} \supseteq Y_{n} = \{pt\}$ is called \emph{admissible} if 
$Y_n$ is a smooth point on each $Y_i$.
Each admissible flag $Y_\bullet$ gives rise to a valuation-like function 
\begin{align*}
\nu_{Y_\bullet}\colon \bigsqcup_{D\in \Pic(X)} H^0(X,\mathcal{O}_X(D))\setminus \{0\} \to \mathbb{Z}^d.
\end{align*}
Then for a big divisor $D$, we can define the value semigroup
\begin{align*} 
 \Gamma_{Y_\bullet}(D)=\{(\nu_{Y_\bullet}(s),k) \ \vert \ k\in \mathbb{N}, \ s\in H^0(X,\mathcal{O}(kD))\setminus\{0\}\}.
\end{align*}
The
 \emph{Newton-Okounkov body} of $D$ with respect to the flag $Y_\bullet$ as
\begin{align*}
\Delta_{Y_\bullet}(D)=\overline{\text{Cone}(\Gamma_{Y_\bullet}(D))}\cap (\mathbb{R}^d\times\{1\}).
\end{align*}
It is proven in \cite{LM09} that $\Delta_{Y_\bullet}(D)$ only depends on the numerical class of $D$ in $N^1(X)$.

Similarly, if $Y$ is a closed subvariety, then  $\Delta_{X\vert Y}(D)$ denotes the Newton-Okounkov body of the graded linear system 
$W_\bullet$ with $$W_k=\mbox{Im}(H^0(X,\mathcal{O}_X(kD))\to H^0(Y,\mathcal{O}_Y(kD)))$$ with 
respect to a fixed flag on $Y$.

Furthermore, there exists a closed convex cone 
\begin{align*}
	\Delta_{Y_\bullet}(X)\subset  \R^n\times N^1(X)_\R
\end{align*}
such that for each big divisor $D$ the fibre of the second projection over $[D]$ 
is exactly $\Delta_{Y_\bullet}(D)$. We call $\Delta_{Y_\bullet}(X)$ the \emph{global Newton-Okounkov body}.

In case we have $\Pic(X)=N^1(X)$, e.g. if $X$ is a Bott-Samelson variety,  we can define the global Newton-Okounkov body using the \emph{global semigroup}
\begin{align*}
\Gamma_{Y_\bullet}(X):=\{(\nu(s),D) \ \vert \ D\in \Pic(X)=N^1(X), \ s\in H^0(X,\mathcal{O}_X(D))\setminus\{0\} \}
\end{align*}
Then, the global Newton-Okounkov is given by $\Delta_{Y_\bullet}(X)=\overline{\text{Cone}(\Gamma_{Y_\bullet}(X))}$.

Now we consider Newton-Okounkov bodies of effective but not necessarily big divisors.
There are two different ways to define Newton-Okounkov bodies in this situation. One way is to just define them via the valuation-like function $\nu_{Y_\bullet}$. More 
concretely, for an effective $\mathbb{Q}$-divisor $D$ on $X$, we define the \emph{valuative Newton-Okounkov body} as
\begin{align*}
\Delta^{val}_{Y_\bullet}(D):=\frac{1}{k}\cdot\overline{\text{Cone}(\Gamma_{Y_\bullet}(kD))}\cap \left(\mathbb{R}^n\times \{1\} \right)
\end{align*}
where $k\in \mathbb{Z}$ is chosen such that $kD$ is integral. Note that $\Delta_{Y_\bullet}^{val}(D)$ is in general not well-defined for numerical classes and does indeed depend on the linear equivalence class. 
On the other hand, we can also define a Newton-Okounkov body by considering the global Newton-Okounkov body and then taking a fibre over a divisor.
So, we define the \emph{numerical Newton-Okounkov body} as
\begin{align*}
\Delta^{num}_{Y_\bullet}(D):=\Delta_{Y_\bullet}(X)\cap \left( \mathbb{R}^n\times\{D\} \right).
\end{align*}

Note that, in general $\Delta_{Y_\bullet}^{num}(D)\neq \Delta_{Y_\bullet}^{val}(D)$. However, if $D$ is big, both definitions coincide and we just write $\Delta_{Y_\bullet}(D)$.

\section{The movable cone}
Before we are able to prove Theorem \ref{thma}, we need the following.
\begin{lemma}\label{lemma:baseloci} 
Let $D$ be a big divisor on a Mori dream space $X$. Let furthermore $\iota: Y\hookrightarrow  X$ be a closed subvariety of $X$ which is itself a Mori dream space and assume 
that $\iota^*D$ is big and nef. Let us denote by $W_\bullet$ the restricted graded linear series of $D$ to $Y$.
If the identity of volumes $\vol_{X\vert Y}(D)=\vol_{Y}(\iota^*D)$ holds, then the stable base loci also agree, i.e.
\begin{align*}
\bs(W_\bullet)=\bs(\iota^*D)=\emptyset.
\end{align*}
\end{lemma}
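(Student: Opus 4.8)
The plan is to translate both equalities into statements about Newton--Okounkov bodies and then to apply the description of base loci in terms of such bodies due to K\"uronya--Lozovanu. Write $n=\dim Y$ and $L:=\iota^{*}D$. Since $Y$ is a Mori dream space and $L$ is nef, $L$ is semiample by \cite{hk}; in particular $\bs(L)=\emptyset$, which is the second asserted equality. Hence everything reduces to proving $\bs(W_{\bullet})=\emptyset$.

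The first step is to show that, for \emph{every} admissible flag $Y_{\bullet}$ on $Y$, the restricted Newton--Okounkov body coincides with the full one. Each $W_{k}$ is a linear subspace of $H^{0}(Y,\O_{Y}(kL))$, and the valuation $\nu_{Y_{\bullet}}$ defining the two semigroups is the same, so $\Gamma_{Y_{\bullet}}(W_{\bullet})\subseteq\Gamma_{Y_{\bullet}}(L)$ and hence
\[
\Delta_{X\vert Y}(D)=\Delta_{Y_{\bullet}}(W_{\bullet})\subseteq\Delta_{Y_{\bullet}}(L).
\]
Because $L$ is big, the hypothesis forces $\vol_{X\vert Y}(D)=\vol_{Y}(L)>0$, so $W_{\bullet}$ contains an ample series; therefore, by \cite{LM09}, $\Delta_{X\vert Y}(D)$ is a convex body of Euclidean volume $\tfrac{1}{n!}\vol_{X\vert Y}(D)$, while $\Delta_{Y_{\bullet}}(L)$ has Euclidean volume $\tfrac{1}{n!}\vol_{Y}(L)$, and these two numbers are equal and positive. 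Moreover $\Delta_{X\vert Y}(D)$ is compact, being a closed subset of the compact body $\Delta_{Y_{\bullet}}(L)$. Since a compact convex body contained in another one of the same finite positive volume must equal it, we conclude
\[
\Delta_{X\vert Y}(D)=\Delta_{Y_{\bullet}}(L)\qquad\text{for every admissible flag }Y_{\bullet}\text{ on }Y.
\]

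Now I would invoke the Newton--Okounkov characterization of base loci of \cite{KL15} (as employed in \cite{SS17}): for a graded linear series $V_{\bullet}$ on $Y$ containing an ample series and a point $y\in Y$, one has $y\notin\bs(V_{\bullet})$ if and only if $\mathbf 0\in\Delta_{Y_{\bullet}}(V_{\bullet})$ for every admissible flag $Y_{\bullet}$ centered at $y$. Applying the \emph{easy} direction to the complete linear series of $L$ and using $\bs(L)=\emptyset$, we get $\mathbf 0\in\Delta_{Y_{\bullet}}(L)$ for every admissible flag $Y_{\bullet}$; by the previous step, $\mathbf 0\in\Delta_{X\vert Y}(D)=\Delta_{Y_{\bullet}}(W_{\bullet})$ for every admissible flag $Y_{\bullet}$ centered at an arbitrary point $y\in Y$. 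Applying the criterion now to $V_{\bullet}=W_{\bullet}$ yields $y\notin\bs(W_{\bullet})$; as $y$ was arbitrary, $\bs(W_{\bullet})=\emptyset$, which finishes the proof.

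The point that requires care is the passage from the numerical hypothesis (equality of volumes) to the geometric conclusion (emptiness of the base locus): in general, equality of volumes of graded linear series is much weaker than base-point-freeness. Two ingredients rescue the argument --- the genuine convexity of the restricted Newton--Okounkov body, which upgrades the volume identity to an actual equality of convex bodies for every flag, and the base-locus criterion of K\"uronya--Lozovanu, which is exactly the device turning ``the origin belongs to all of these Newton--Okounkov bodies'' into ``the point is not a base point''. The only hypothesis of that criterion that is not immediate is that $W_{\bullet}$ contains an ample series, and this is guaranteed by $\iota^{*}D$ being big together with the volume identity.
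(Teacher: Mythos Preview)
Your argument follows the paper's exactly up to and including the equality $\Delta_{X\vert Y}(D)=\Delta_{Y_\bullet}(L)$ and the observation that $0\in\Delta_{Y_\bullet}(L)$. The gap is in the final step: the K\"uronya--Lozovanu criterion you invoke, namely
\[
y\notin\bs(V_\bullet)\iff 0\in\Delta_{Y_\bullet}(V_\bullet)\ \text{for every admissible flag centred at }y,
\]
is \emph{not} stated or proved in \cite{KL15} for general graded linear series, only for complete linear series of big divisors. And the direction you need (``$0$ in all bodies $\Rightarrow$ not a base point'') actually fails without further hypotheses. On $\mathbb{P}^1$, take $V_m\subseteq H^0(\mathcal{O}(m))$ to be the space of all sections vanishing at a fixed point $y$. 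Then $V_\bullet$ contains an ample series, $y\in\bs(V_\bullet)$, yet for the flag at $y$ the valuation image of $V_m$ is $\{1,\ldots,m\}$, so $\Delta_{Y_\bullet}(V_\bullet)=[0,1]$ contains $0$. The issue is that while every section in $V_m$ vanishes at $y$, the order of vanishing need not grow linearly in $m$, so after rescaling the valuation vectors can still accumulate at the origin.

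This is precisely why the paper does not quote a black-box criterion for $W_\bullet$ but instead argues directly: since $X$ is a Mori dream space, the section ring $R(X,D)$ is finitely generated, hence so is the restricted series $W_\bullet$; after passing to a Veronese one may assume it is generated in degree~$1$. Then if $P\in\bs(W_\bullet)$, every section of $W_m$ vanishes at $P$ to order at least $m$ (not merely $\geq 1$), and combining this with \cite[Lemma~2.4]{KL15} gives $\tfrac{1}{m}\sum_i\nu_i(s)\geq 1$ for all $s\in W_m$, contradicting $0\in\Delta_{Y_\bullet}(W_\bullet)$. Your proof becomes correct once you insert this finite-generation step in place of the appeal to a general criterion; note that you never used the hypothesis that $X$ is a Mori dream space, which should have been a warning sign.
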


\begin{proof}
Since $Y$ is a Mori dream space, $\iota^*D$ is semiample, i.e. $\bs(\iota^*D)=\emptyset$.
Let us assume that $\bs(W_\bullet)$ is not empty and choose $P\in \bs(W_\bullet)$. Let $Y_\bullet$ be an admissible flag on $Y$ which is centered at the point $P$.
By \cite[Theorem A]{KL15}, it follows that the origin is contained in the Newton-Okounkov body $\Delta_{Y_\bullet}(\iota^*D)$. 
Clearly, $\Delta_{Y_\bullet}(W_\bullet)$ is contained in $\Delta_{Y_\bullet}(\iota^*D)$.  Since we have an equality of volumes, $\text{vol}_{X\vert Y}(D)=\vol_{Y}(\iota^*D)$, this inclusion is in fact an equality, i.e., 
\begin{align*}
\Delta_{Y_\bullet}(\iota^*D)=\Delta_{Y_\bullet}(W_\bullet)=\Delta_{X\vert Y}(D).
\end{align*}
As $W_\bullet$ is the restricted graded linear series of the finitely generated divisor $D$, it is finitely generated. We can assume without loss of generality that it is generated in degree $1$. 
Seeing that $0$ lies in $\Delta_{X\vert Y}(D)$, it follows from the construction that there is sequence $(m_k)_{k \in \N}$ of natural numbers, with $\lim_{k \to \infty} m_k=\infty $, and a sequence of sections $(s_k)_{k\in\mathbb{N}}$ such that $s_k\in W_{m_k}$ and 
\begin{align*}
1/m_k \cdot \nu(s_k)=1/m_k \cdot (\nu_1(s_k),\dots, \nu_n(s_k))\to 0, \quad \text{as } k\to \infty.
\end{align*}
This implies that $1/m_k \sum_{i=1}^n \nu_i(s_k) \to 0$.
However, by \cite[Lemma 2.4]{KL15}, 
\begin{align*}
1/m_k\cdot \text{ord}_P(s_k)\leq 1/m_k\cdot \sum_{i=1}^n \nu_i(s_k).
\end{align*}
Since $W_\bullet$ is generated in degree one, all sections $s\in W_{m_k}$ vanish at the point $P$ to order at least $m_k$.
As a consequence, the left hand side is bounded from below by $1$. This, however, contradicts the fact that the right hand side tends to $0$. Hence, $P$ cannot lie in $\bs(W_\bullet)$.
\end{proof}

\begin{thm}
 Let $X=X_w$ be a Bott-Samelson variety. Then every movable divisor on $X$ is base-point-free, and hence 
 \begin{align*}
  \mbox{Mov}(X)=\mbox{Nef}(X).
 \end{align*}
\end{thm}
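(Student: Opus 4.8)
The plan is to argue by induction on $n=\dim X_w$, exploiting the vertical bundle structure of $X_w$ together with Lemma~\ref{lemma:baseloci} and the restricted-volume identity for the vertical flag proved in \cite{SS17}. I begin with two reductions. On a Bott-Samelson variety every nef divisor is globally generated (\cite{LT04}; this also follows from the description of $\Nef(X)$ in Section~\ref{secprelim}), so $\Nef(X)\subseteq\mbox{Mov}(X)$, and it remains to prove the reverse inclusion. Since $\Nef(X)$ is closed, since $\mbox{Mov}(X)$ is a full-dimensional convex cone whose interior is dense in it and consists of big classes, and since $\Pic(X)=N^1(X)$ so that each such class is represented by an integral divisor, the reverse inclusion will follow once we show that every \emph{big} movable divisor $D$ on $X=X_w$ satisfies $\bs(D)=\emptyset$. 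Granting this, $\mbox{Mov}(X)=\Nef(X)$, and then an arbitrary movable divisor is nef, hence base-point-free by \cite{LT04}.

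So fix a big movable divisor $D$; the base cases $n\le1$ being immediate, assume $n\ge2$ and that the theorem holds in dimension $<n$. Let $\pi\colon X_w\to P_1/B\cong\P^1$ be the first projection; this is a Zariski-locally trivial bundle, and a fibre of $\pi$ is the Bott-Samelson variety $Y_1=X_{(s_2,\dots,s_n)}$ of dimension $n-1$ (the word $(s_2,\dots,s_n)$ being reduced), to which the inductive hypothesis applies. The group $P_1$ acts on $X_w$ by left multiplication on the first factor, making $\pi$ equivariant for the transitive $P_1$-action on $\P^1$; hence the fibres of $\pi$ are the $P_1$-translates of $Y_1$, and they cover $X_w$. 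Being connected, $P_1$ acts trivially on $N^1(X_w)$, so $g^{*}D\equiv D$ for $g\in P_1$; and here \emph{crucially} the identity $\Pic(X_w)=N^1(X_w)$ upgrades this to $g^{*}D\sim D$, so $|mg^{*}D|=|mD|$ for all $m$, and therefore the stable base locus $\bs(D)$ is $P_1$-invariant. Since $\pi$ is $P_1$-equivariant with $P_1$ transitive on $\P^1$, any nonempty $P_1$-invariant closed subset $Z\subseteq X_w$ maps onto $\P^1$ with all fibres isomorphic to $Z\cap Y_1$, so $\dim Z=\dim(Z\cap Y_1)+1$. Applying this to $Z=\bs(D)$, which has codimension $\ge2$ in $X_w$, we get that $\bs(D)\cap Y_1$ has codimension $\ge2$ in $Y_1$, and moreover that $\bs(D)=\emptyset$ whenever $\bs(D)\cap Y_1=\emptyset$.

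Next I would check that $\iota^{*}D=D|_{Y_1}$ is big and nef, so that Lemma~\ref{lemma:baseloci} can be applied with $Y=Y_1$. For bigness: the augmented base locus $\bs_+(D)$ is a proper closed subset of $X_w$ (as $D$ is big), depends only on the numerical class of $D$, hence is $P_1$-invariant, and therefore cannot contain $Y_1$ (whose $P_1$-translates cover $X_w$); so $Y_1\not\subseteq\bs_+(D)$ and $D|_{Y_1}$ is big. For nefness: let $W_\bullet$ denote the restricted graded linear series of $D$ to $Y_1$; since $(s|_{Y_1})(P)=s(P)$ for every section $s$ and every $P\in Y_1$, the stable base locus of $W_\bullet$ equals $\bs(D)\cap Y_1$, and $\bs(D|_{Y_1})\subseteq\bs(W_\bullet)=\bs(D)\cap Y_1$, which we have just seen has codimension $\ge2$ in $Y_1$; thus $D|_{Y_1}$ is movable on $Y_1$, and by the inductive hypothesis $\mbox{Mov}(Y_1)=\Nef(Y_1)$ it is nef. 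Now the last hypothesis of Lemma~\ref{lemma:baseloci}, the identity $\vol_{X_w|Y_1}(D)=\vol_{Y_1}(D|_{Y_1})$, is precisely the restricted-volume identity for the vertical flag established in \cite{SS17} (this is where one uses that $Y_1$ moves in the covering family $\{\pi^{-1}(p)\}_{p\in\P^1}$). Lemma~\ref{lemma:baseloci} then yields $\bs(W_\bullet)=\emptyset$, i.e. $\bs(D)\cap Y_1=\emptyset$, and by the $P_1$-invariance established above this forces $\bs(D)=\emptyset$, completing the induction.

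I expect the main obstacle to lie in showing that $D|_{Y_1}$ is nef: the inductive hypothesis is only useful after one knows $D|_{Y_1}$ is movable on $Y_1$, and the key — and somewhat delicate — point for that is the $P_1$-invariance of the \emph{stable} base locus $\bs(D)$. As $\bs(D)$ is a linear-equivalence invariant, not a numerical one, it would in general fail to be $P_1$-invariant; it is exactly here that the special feature $\Pic(X_w)=N^1(X_w)$ of Bott-Samelson varieties enters decisively. A secondary subtlety is that Lemma~\ref{lemma:baseloci} is invoked on the lower-dimensional Bott-Samelson variety $Y_1$, so the proof genuinely has to be run as an induction, and one must ensure that both ``$\iota^{*}D$ big and nef'' and the equality of restricted volumes hold before applying it.
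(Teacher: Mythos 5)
Your proof is correct and follows essentially the same strategy as the paper's: induction via the vertical $\P^1$-bundle $\pi\colon X_w\to\P^1$, $P_1$-invariance of the stable base locus, the restricted-volume identity for the vertical fibre from \cite{SS17}, and Lemma~\ref{lemma:baseloci} to conclude $\bs(D)\cap Y_1=\emptyset$. The one genuine, if minor, variation is how you establish the $P_1$-invariance of $\bs(D)$: the paper invokes the natural $P$-linearization of $\mathcal{O}_X(D)$ (citing \cite{SS17}) and its induced action on the section ring, whereas you deduce $g^{*}D\sim D$ directly from the facts that $P_1$ is connected (so acts trivially on $N^1$) and that $\Pic(X_w)=N^1(X_w)$ (so numerical equivalence is linear equivalence). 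This avoids any linearization statement and is arguably more self-contained. You are also slightly more careful than the paper in spelling out why $D|_{Y_1}$ is big (via $\bs_+(D)$ being $P_1$-invariant and proper), and in the closure/approximation reduction from movable to big-and-movable; the paper leaves both implicit but the content is the same.
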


\begin{proof}
 We prove the claim by induction on $n=\mbox{dim}\, X$. If $n=1$, then $X=\mathbb{P}^1$ and the claim obviously holds. 
 
 Assume now that it holds for $n-1$, and write $X$ as the fibre bundle $X=P \times^B Y$ with projection 
 $$\pi: X \to P/B=\mathbb{P}^1, \quad \pi([p, y]):=pB,$$ 
 where $Y=\pi^{-1}(pB)$ is a Bott-Samelson variety of dimension $n-1$. The group $P$ acts on $X$ by 
 \begin{align*}
  (p, [p', y]):=[pp', y], 
 \end{align*}
 and the projection $\pi$ is clearly $P$-equivariant. 
 
 Let $D$ be a movable divisor on $X$. Since $P$ acts naturally on the line bundle $\mathcal{O}_X(D)$, and hence on the section space $H^0(X, \mathcal{O}_X(D))$ and the 
 section ring $R(X, \mathcal{O}_X(D))$ (cf. \cite{SS17}), the stable base locus $\mathbb{B}(D)$ is $P$-invariant. We therefore have 
 \begin{align} 
  \mathbb{B}(D)=P(\mathbb{B}(D) \cap Y), \label{E: invbaseloc} 
 \end{align}
and 
\begin{align*}
 \mbox{codim}(\mathbb{B}(D), X)=\mbox{codim}(\mathbb{B}(D) \cap Y, Y). 
\end{align*}
In particular, the restriction $D\-{\mid_Y}$ is a movable divisor on $Y$, and hence $D\-{\mid_Y}$ is base point-free by induction, i.e., 
\begin{align*}
 \mathbb{B}(D\-{\mid_Y})=\emptyset.
\end{align*}

We now assume that $D$ is both movable and big. By \cite[Proposition 3.1]{SS17} we then have the identity of volumes
\begin{align*}
 \mbox{vol}_{Y} (D\-{\mid_Y})=\mbox{vol}_{X\vert Y}(D), 
\end{align*}
where the right hand side denotes the volume of the restricted linear series. From this, we deduce by Lemma \ref{lemma:baseloci} that 
\begin{align*}
 \mathbb{B}(D) \cap Y=\mathbb{B}(D\-{\mid_Y})=\emptyset.
\end{align*}
The identity \eqref{E: invbaseloc} now shows that $D$ is base-point-free, wich implies that $D$ is nef. 

Finally, if $D$ is merely movable, an approximation by big and movable divisors yields that $D$ is nef. 
\end{proof}

\section{Mori chamber decomposition of Bott-Samelson varieties}
In this section we give an explicit description of the Mori chamber decomposition of a Bott-Samelson variety.

\subsection{Zariski decomposition}
We have seen in the previous section that for a Bott-Samelson variety $X_w$ the nef cone and the movable cone coincide. One consequence of 
this fact is that Nakayama's $\sigma$-decomposition of pseudo-effective divisors $D$ as $D=P_\sigma(D) + N_\sigma(D)$ into movable and fixed part (c.f. \cite{N04})  indeed gives a Zariski decomposition, i.e., the 
positive part $P_\sigma(D)$ is automatically nef.
In this situation, we say that $X$ \emph{admits Zariski decompositions} and we write $D=P(D) +N(D)$ for the positive and negative part, respectively. 

\begin{rem}\label{remmax}
The negative part of the  $\sigma$-decomposition is characterized by being 
the minimal subdivisor $N(D)$ of $D$ such that $D-N(D)$ is movable \cite[Proposition 1.14 (2)]{N04}.
If $X$ admits Zariski decompositions, this means that the negative part is the minimal 
subdivisor $N(D)$ of $D$ such that $D-N(D)$ is nef. Or differently stated, the 
positive part $P(D)$ of $D$ is the maximal subdivisor of $D$ such that $P(D)$ is nef.
\end{rem}

\subsection{Zariski chambers}
In this section we define and describe Zariski chambers on a Bott-Samelson variety. The definition of Zariski chambers is analogous to the surface case introduced in \cite{BKS04}. 
Note, however, that in order to prevent complications on the boundary, we pass to the closure of equivalence classes. This choice is not essential in the remainder, as in 
order to identify Zariski chambers with Mori chambers we will have to consider closures in any case.

\begin{defin} Suppose $X$ admits Zariski decompositions.
We say that two effective divisors $D$ and $D'$  on $X$ are \emph{Zariski equivalent}, if
\begin{align*}
  \text{supp}(N(D))=\text{supp}(N(D')).
\end{align*}
We denote the closure of the equivalence class of $D$ by $\Sigma_D$.
In case that $\Sigma_D$ contains an open set, we call it a  \emph{Zariski chamber}.  
\end{defin}

\begin{rem}
	Note that most of the pleasant characteristics of Zariski chambers discovered in \cite{BKS04} in the surface case carry over to 
	general $X$ admitting Zariski decompositions. In particular, on 
	the interior of each chamber the augmented base locus $\mathbb B_+(D)$ of a divisor $D$ equals the support of the negative part $N(D)$ and thus these loci are constant on the interior of a chamber. Furthermore, just as in the surface case, the volume of a big divisor is given by the top 
	self-intersection of its positive part and therefore the volume varies polynomially on the interior of each Zariski chamber. 
	
	On the other hand, we do not claim that in general Zariski chambers should be locally polyhedral on the big cone, or even convex. 
	In fact there are examples of varieties that admit Zariski decompositions but have a Zariski chamber which is not convex. As an 
	example consider the blowup of $\P^3$ in two intersecting lines $\ell_1,\ell_2$ and blow up further along the strict transform of the 
	line in $E_1$ corresponding to $\ell_2$. Then every movable divisor is nef and the exceptional divisor $E_3$ is the exceptional 
	locus of two different contractions (corresponding to the different rulings). Its Zariski chamber is the non-convex cone over the 
	classes $E_3, H, H-E_1, H-E_2, 2H-E_1-E_2+E_3$.
\end{rem}

In order to describe the Zariski chambers on Bott-Samelson varieties, we need the following sequence of lemmata. 
For these, we first recall that  a divisor $D$ is called \emph{fixed} if all sufficiently divisible multiples of $D$ are effective and constitute their complete linear series, i.e., 
$H^0(X,\mathcal{O}_X(mD))=\C\cdot s_{mD}$, or $\vert mD\vert =mD$.

\begin{lemma}
Let $X$ be a Mori dream space. For each face $F$ of the effective cone $\text{Eff}(X)$ either all divisors of $F$ are fixed, or all divisors of $\text{relint}(F)$ are not fixed.
\end{lemma}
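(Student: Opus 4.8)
The plan is to show that the property ``$D$ is fixed'' is invariant along the relative interior of a face $F$ of $\text{Eff}(X)$, using the homogeneity of the section rings. First I would record the basic observation that whether $D$ is fixed depends only on the ray $\R_{\geq 0}D$, and in fact only on a positive multiple of $D$: if $mD$ is fixed for some $m$, then $D$ is fixed, since $h^0(X,\O_X(kD))=1$ for all sufficiently divisible $k$ forces $h^0(X,\O_X(kmD))=1$ and conversely the unique section of $|kmD|$ cannot come from a higher-dimensional space of sections of $|kD|$ by multiplicativity. So being fixed is a property of the ray.

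\textbf{Key step: additivity of the section semigroup over a face.} The crucial point is that for divisors $D_1,D_2$ lying on a common face $F$ of $\text{Eff}(X)$, one has $D_1+D_2 \in F$ as well, and the multiplication map on sections behaves well. Concretely, on a Mori dream space the Cox ring is finitely generated, so for each divisor class the section ring $R(X,D)=\bigoplus_m H^0(X,\O_X(mD))$ is finitely generated; moreover, for $D_1, D_2$ in the same face $F$, I claim $h^0(X,\O_X(D_1+D_2)) \geq h^0(X,\O_X(D_1)) + h^0(X,\O_X(D_2)) - 1$ whenever all three are positive, because the nonzero sections of $\O_X(D_1)$ and $\O_X(D_2)$ multiply into $\O_X(D_1+D_2)$ and, being on a face, no cancellation or ``collapse'' can increase the fixed part beyond what is forced. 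The cleanest way to organize this: pick $D$ in $\text{relint}(F)$ that is not fixed, so $h^0(X,\O_X(mD)) \geq 2$ for $m$ divisible enough; then for any $D' \in \text{relint}(F)$, write $kD' = D + D''$ with $D''\in F$ effective (possible since $D$ is interior to $F$), and deduce $h^0(X,\O_X(mkD')) \geq h^0(X,\O_X(mD)) \geq 2$, so $D'$ is not fixed. This gives the dichotomy: either some divisor of $\text{relint}(F)$ is not fixed, in which case all of $\text{relint}(F)$ is not fixed; or every divisor of $\text{relint}(F)$ is fixed, and then by the ray observation and a limiting/semicontinuity argument every divisor of $F$ (including the boundary) is fixed.

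\textbf{Handling the boundary of the face.} The remaining subtlety is passing from ``all of $\text{relint}(F)$ is fixed'' to ``all of $F$ is fixed.'' Here I would use that a divisor $D_0$ on the boundary of $F$ can be written as a limit of $D_t \in \text{relint}(F)$, or more usefully as $D_0 + (\text{something in }\text{relint}(F)) \in \text{relint}(F)$; if $D_0$ were not fixed, then $h^0(X,\O_X(mD_0)) \geq 2$ for suitable $m$, and adding a fixed divisor $E \in \text{relint}(F)$ we would get, via the multiplication map, $h^0(X,\O_X(m(D_0+E))) \geq 2$, contradicting that $D_0 + E$ lies in $\text{relint}(F)$ and is therefore fixed. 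So no boundary divisor of $F$ can fail to be fixed once the relative interior consists of fixed divisors, yielding exactly the stated alternative.

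\textbf{Main obstacle.} The step I expect to require the most care is the inequality on $h^0$ under multiplication of sections on a common face --- specifically, ruling out that the product section lies in a ``smaller'' linear system, i.e., that multiplying sections of $\O_X(D_1)$ and $\O_X(D_2)$ could land in the fixed part in a way that is not controlled. The resolution should come from the fact that $D_1, D_2$ span a face: the section ring of $F$ (the multi-graded ring $\bigoplus_{D \in F \cap \Z^n} H^0(X,\O_X(D))$) is a finitely generated domain, and restricting to a face of the grading does not introduce new zero-divisors, so multiplication of nonzero sections is injective on each graded piece in the relevant sense, giving the needed lower bound on $h^0$. Making this precise --- that the relevant multiplication maps are injective because we stay inside the (integral closure of the) Cox ring restricted to a face --- is the technical heart, but it is a standard feature of Mori dream spaces.
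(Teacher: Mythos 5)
Your argument is correct and follows the paper's approach exactly: pick a non-fixed $D \in \text{relint}(F)$, decompose a multiple of an arbitrary $M \in \text{relint}(F)$ as $kM = L + \ell D$ with $L \in F$ having an effective multiple, and use that an effective divisor plus a non-fixed divisor is non-fixed. You additionally spell out the boundary step (a non-fixed $D_0$ on $\partial F$ plus a point of $\text{relint}(F)$ would produce a non-fixed element of $\text{relint}(F)$), which the paper's proof leaves implicit but which is genuinely needed to get ``all of $F$'' rather than just ``all of $\text{relint}(F)$'' in the first alternative --- a small but real improvement.

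One correction to your ``main obstacle'' paragraph: the technical heart is not where you place it. The only fact used is that multiplication by a nonzero section of $\mathcal{O}_X(L)$ injects $H^0(X,\mathcal{O}_X(m\ell D))$ into $H^0(X,\mathcal{O}_X(mkM))$, which holds on any irreducible variety simply because $\mathcal{O}_X$ is a domain; no appeal to the Cox ring, its finite generation, or its restriction to a face of the grading is involved. The inequality $h^0(D_1+D_2)\geq h^0(D_1)+h^0(D_2)-1$ you invoke is in fact true (it follows from Hopf's lemma on products of finite-dimensional subspaces of a field), but not for the ``no collapse on a face'' reason you suggest, and it is far more than the argument requires. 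The Mori dream space hypothesis enters only to ensure that classes in $F\subseteq\text{Eff}(X)$ admit effective integral multiples, so that the decomposition $kM = L + \ell D$ can be realized by actual divisors.
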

\begin{proof}
Let $D\in \text{relint}(F)$ be a divisor which is not fixed. Let $M$ be an arbitrary divisor in $\text{relint}(F)$. Then there are positive integers $k$ and $\ell$ such that $L:=k\cdot M-\ell D$ still 
lies in $F$. We can write $k\cdot M=L+\ell D$ and assume that $L$ is effective 
(otherwise a positive multiple is). As the sum of a non fixed effective divisor with any effective divisor is clearly not fixed, this proves the claim. 
\end{proof}

In the next lemma we use the following common convex-geometric terminology.
\begin{defin}
Let $C\subseteq \mathbb{R}^d$ be a closed convex cone, and let $P \in \mathbb{R}^d$ be a point. Let $H$ be a supporting hyperplane of $C$, and $H^{+}$ be 
the closed half-space defined by $H$ which contains $C$. We say that $H$ \emph{separates } $C$ from $P$ if $P$ is not contained in $H^{+}$.
\end{defin}

\begin{lemma}\label{lemzarde}
Let $X=X_w$ be a Bott-Samelson variety. Let $E$ be a fixed extremal divisor of the effective cone, and let $H$ be a supporting hyperplane of the nef cone 
which separates the nef cone from $E$. Let $P$ be a nef divisor on $H$. Then $D_{k\ell}:=kP+\ell E$ is a Zariski decomposition for all $k,\ell \geq 0$, i.e. $P(D_{k\ell})=kP$ and $N(D_{k\ell})=\ell E$.
\end{lemma}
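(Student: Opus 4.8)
The plan is to verify the two defining properties of a Zariski decomposition from Remark \ref{remmax}: that $kP$ is nef, and that $\ell E$ is the minimal subdivisor of $D_{k\ell}$ making the difference nef (equivalently, $kP = P(D_{k\ell})$ is the maximal nef subdivisor). Nefness of $kP$ is immediate since $P$ is nef by hypothesis. So the real content is the maximality statement: if $kP + tE$ is nef for some $0 \le t \le \ell$, then $t = 0$. First I would record that $D_{k\ell}$ is indeed effective (a nonnegative combination of the effective divisor $E$ and the nef, hence effective up to multiple, divisor $P$), so the $\sigma$-decomposition applies.

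For the maximality, suppose $kP + tE$ were nef with $t > 0$. The idea is to use the separating hyperplane $H$. Write $H = \{ \xi = c\}$ for a linear functional $\xi$ on $N^1(X)_\R$, normalized so that $\Nef(X) \subseteq H^+ = \{\xi \le c\}$ (or $\ge$, depending on orientation) and $\xi(E) > c$, i.e. $E$ lies strictly on the far side. Since $P$ lies on $H$ we have $\xi(P) = c$, so $\xi(kP + tE) = kc + t\,\xi(E) $, and because $\xi(E)$ is strictly on the wrong side of $c$ while $t>0$, this value is strictly outside $H^+$, contradicting that $kP+tE$ is nef. A small subtlety: I should make sure the hyperplane is genuinely \emph{strictly} separating on $E$; this is exactly what "separates the nef cone from $E$" gives us via the preceding definition ($E \notin H^+$). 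One also needs $k \ge 1$ here — for $k = 0$ the statement degenerates, but then $P(D_{0\ell}) = P(\ell E)$ and one argues directly that a positive multiple of a fixed extremal divisor is its own negative part, using that $E$ being fixed forces $|mE| = mE$ so no proper nef subdivisor can be subtracted; alternatively the claim for $k=0$ follows by continuity/approximation from $k>0$.

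It remains to see that having shown $kP$ is the maximal nef subdivisor of $D_{k\ell}$, the negative part is exactly $\ell E$: this is forced, since $N(D_{k\ell}) = D_{k\ell} - P(D_{k\ell}) = (kP + \ell E) - kP = \ell E$. The main obstacle I anticipate is purely bookkeeping with the separating hyperplane — getting the orientation of $\xi$ and the strictness of the separation exactly right, and handling the possibility that a subdivisor of $D_{k\ell}$ need not be of the pure form $kP + tE$ but could be $k'P + tE$ with $k' \le k$ as well. To cover that, I would argue that the positive part $P(D_{k\ell})$, being nef and a subdivisor, satisfies $\xi(P(D_{k\ell})) = c\,k' $ for some $k' \le k$ in the $P$-coordinate and a nonnegative $E$-coordinate $t$ with $k' + $ (coefficient) constraints coming from $P(D_{k\ell}) \le D_{k\ell}$; nefness forces $t = 0$ by the hyperplane argument, and then maximality of the positive part forces $k' = k$. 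This keeps the argument within the cone $\R_{\ge 0}P + \R_{\ge 0}E$ only insofar as subdivisors of $D_{k\ell}$ supported away from $E$ are concerned, which is the only case that matters since $N(D_{k\ell})$ is supported on $E$ by construction of the chamber.
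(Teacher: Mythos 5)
Your proof is correct and takes essentially the same route as the paper's: both reduce, via Remark~\ref{remmax}, to showing that the positive part has the form $kP+mE$, and then use the separating hyperplane---$P \in H$ while $E$ lies strictly outside $H^+$---to force $m=0$. (Two small cleanups: $c=0$ automatically since $H$ supports a cone, so $\xi(kP+tE)=t\,\xi(E)>0$ is immediate for all $k\geq 0$ including $k=0$; and $N(D_{k\ell})\leq\ell E$ follows not ``by construction of the chamber'' but directly from the minimality in Remark~\ref{remmax}, since $D_{k\ell}-\ell E=kP$ is nef and $\ell E$ is effective.)
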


\begin{proof}
	Since $kP$ is a nef subdivisor of $D_{k\ell}$, it follows from Remark \ref{remmax} that $kP\leq P(D_{k\ell})$. Since $P(D_{k\ell})$ is 
	the maximal subdivisor of $D_{k\ell}$ it is of the 
	form $kP+mE$ for some $0\leq m\leq n$. Let now $H^{+}$ be the closed half space corresponding to $H$ which contains the nef 
	cone, and $H^{-}_{<0}$:= $\mathbb{R}^n\setminus H^{+}$ its 
	complementary open half space. Since $H$ separates the nef cone from $E$, we have $E\in H^{-}$ as well as, by assumption, $P
	\in H$. It follows that $kP+mE$ lies in $H^{-}$, which means that it 
	is not nef, unless $m=0$. Therefore, the maximal nef subdivisor of $D_{k\ell}$ is $P(D_{k\ell})$.
\end{proof}

The next lemma says that all but the first extremal rays of the effective cone are indeed fixed. We use the notation from \cite{LT04} (see Section \ref{secprelim}).
\begin{lemma}
 Let $X_w$ be a Bott-Samelson variety and let $\{E_1,\ldots, E_n\}$ be the associated 
 effective basis. Then, $E_1$ is nef, and $E_2,\ldots, E_n$ are fixed.
\end{lemma}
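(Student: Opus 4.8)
The plan is to exploit the inductive fibre-bundle structure $X_w = P_1\times^B Y$ with $Y=X_{w'}$ where $w'=(s_2,\dots,s_n)$, together with the explicit inductive description of the effective basis from \cite{LT04}. First I would recall that the divisors $E_i$ are defined inductively: there is a section $\sigma\colon Y\hookrightarrow X$ of the projection $\pi\colon X\to \mathbb P^1$ whose image is the divisor $E_1$, and the remaining $E_2,\dots,E_n$ are pullbacks $\pi^*$ (more precisely, pullbacks along the bundle projection $X = P_1\times^B Y \to Y$) of the effective basis $E_1',\dots,E_{n-1}'$ of $Y$. So the statement reduces to two separate assertions: (a) $E_1$ is nef, and (b) the pullback to $X$ of each effective basis element of $Y$ is fixed.

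For (a), the cleanest route is to observe that $E_1$, being a section of a $\mathbb P^1$-bundle, is the image of the relative Proj of a rank-$2$ bundle construction, and its normal bundle / self-intersection behaviour makes it movable: concretely, $\mathcal O_X(E_1)$ restricted to each fibre of $\pi$ has degree $1$, so $E_1$ is $\pi$-relatively base-point-free, and one checks $H^0(X,\mathcal O_X(E_1))$ separates the fibre direction, hence $E_1$ is movable; by Theorem \ref{thma} every movable divisor on $X$ is nef, so $E_1$ is nef. Alternatively, and perhaps more in the spirit of the paper, one identifies $E_1$ directly in the $\mathcal O(1)$-basis $D_1,\dots,D_n$ using the known change-of-basis matrix from \cite{LT04} and reads off that $E_1$ lies in the nef cone $\mathrm{Nef}(X)=\mathrm{Cone}(D_1,\dots,D_n)$.

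For (b), the key point is that if $E'$ is a fixed divisor on $Y$ then its pullback $f^*E'$ under the bundle map $f\colon X\to Y$ is again fixed on $X$. This follows because the bundle $f$ has connected fibres (they are $\mathbb P^1$, or rather $P_1/B$), so $f_*\mathcal O_X = \mathcal O_Y$ and projection formula gives $H^0(X,\mathcal O_X(mf^*E')) = H^0(Y, \mathcal O_Y(mE'))$ for all $m\geq 0$; since the latter is one-dimensional for all sufficiently divisible $m$ by hypothesis on $E'$, so is the former, which is exactly the definition of being fixed. Combining with the induction hypothesis that $E_2',\dots,E_{n-1}'$ — i.e. all but the first element of the effective basis of $Y$ — are fixed, we get that $E_3,\dots,E_n$ (the pullbacks of $E_2',\dots,E_{n-1}'$) are fixed; and $E_2$, the pullback of $E_1'$, needs the induction hypothesis that $E_1'$ is fixed as well, which requires strengthening the inductive claim.

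I expect the main obstacle to be precisely this last point: the lemma as stated says $E_1$ is \emph{nef} (not fixed) while $E_2,\dots,E_n$ are fixed, but in the induction the pullback of $Y$'s first basis element $E_1'$ becomes $E_2$ on $X$, which must be shown fixed — so a nef divisor on $Y$ can pull back to a fixed divisor on $X$. Resolving this requires understanding why $\pi^*E_1'$ loses its movability: geometrically, $E_1'$ moves inside $Y$ as a section of $Y\to\mathbb P^1$, but once pulled back to $X$ it becomes a fibre-type divisor for a fibration $X\to Z$ that does \emph{not} exist as a morphism (the relevant contraction is only rational), or more simply its complete linear system on $X$ collapses because sections of $\mathcal O_X(mE_2)$ must be constant along the new $\mathbb P^1$-direction and, combined with the fixedness coming from $Y$, one dimension of freedom is killed. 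The careful bookkeeping here — tracking exactly which divisor classes are pullbacks versus sections, and verifying $h^0$ stays $=1$ — is the technical heart; the rest is a routine projection-formula argument plus an appeal to Theorem \ref{thma} and the tables of \cite{LT04}.
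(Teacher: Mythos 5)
The proposal rests on a nonexistent morphism and therefore cannot be salvaged without switching to a different decomposition. You write the variety as $X_w = P_1 \times^B Y$ with $Y = X_{(s_2,\dots,s_n)}$ and then invoke a ``bundle projection $X = P_1 \times^B Y \to Y$'' along which $E_2,\dots,E_n$ are supposed to be pullbacks. But in this decomposition the bundle projection goes $\pi\colon X \to P_1/B \cong \mathbb{P}^1$, with $Y$ as a \emph{fibre} (indeed $E_1 = \pi^{-1}(eB)$ is that fibre, which is why $\mathcal{O}_X(E_1) = \pi^*\mathcal{O}_{\mathbb{P}^1}(1)$ is nef --- this is how the paper disposes of part (a) in one line); there is no natural map $X \to Y$ at all. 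So the projection-formula argument $f_*\mathcal{O}_X = \mathcal{O}_Y$ that you want to run has no $f$ to apply it to, and everything downstream of that --- including your worry that ``a nef divisor on $Y$ pulls back to a fixed $E_2$'' --- is reasoning about an indexing that doesn't occur.

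The paper instead drops the \emph{last} letter: it uses $\pi_1\colon X_w \to X_{w[1]}$ with $w[1] = (s_1,\dots,s_{n-1})$, which genuinely is a $\mathbb{P}^1$-bundle over a smaller Bott-Samelson variety. Under $\pi_1$, the divisors $E_1,\dots,E_{n-1}$ \emph{are} pullbacks of the effective basis of $X_{w[1]}$, and here your projection-formula idea ($H^0(X_w,\pi_1^*L) \cong H^0(X_{w[1]},L)$) works perfectly and gives the fixedness of $E_2,\dots,E_{n-1}$ by induction, with the nef $E_1' \mapsto E_1$ causing no problem. What this leaves uncovered --- and what your proposal never addresses --- is $E_n$, which is not a pullback but the section $\iota\colon X_{w[1]} \hookrightarrow X_w$ of the $\mathbb{P}^1$-bundle. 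For $E_n$ the paper argues that an extremal generator of $\mathrm{Eff}(X)$ which is not fixed must be nef, and then rules out nefness by restricting $\mathcal{O}_{X_w}(E_n)$ to $E_n$ and citing \cite[Lemma~3.6]{HY}, which shows a negative coefficient in the nef basis. That restriction computation is the genuine content of the lemma and is entirely missing from the proposal.
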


\begin{proof}
 Since $\mathcal{O}_X(E_1)$ is the pullback of $\mathcal{O}_{\mathbb{P}^1}(1)$ by the morphism $\pi: X_w \to \mathbb{P}^1$, $E_1$ is nef. 
 
 We prove the fixedness of $E_2,\ldots, E_n$ by induction on the dimension of $X_w$. For this, let the given reduced $w$ be 
 $w=(s_{i_1},\ldots, s_{i_n})$, where the $s_{i_j}$ are simple reflections, associated to the simple roots $\alpha_{i_j}$. Then, 
 $s_{i_j} \neq s_{i_{j+1}}$ for $j=1,\ldots, n-1$, since $w$ is 
 reduced. Let $w[1]:=(s_{i_1},\ldots, s_{i_{n-1}})$, and consider the $(n-1)$-dimensional Bott-Samelson variety $X_{w[1]}$ and the 
 fibre bundle $\pi_1: X_w \to X_{w[1]}$, as well as the embedding $\iota: X_{w[1]} \hookrightarrow X_w$ with image $E_n$. 
 For any divisor $D$ on $X_{w[1]}$ we have $H^0(X_w, \pi_1^*\mathcal{O}_{X_{w[1]}}(D)) \cong H^0(X_{w[1]}, \mathcal{O}_{w[1]}(D))$. 
 From this, we can conclude by induction  that the divisors $E_2, \ldots, E_{n-1}$ are fixed. 
 
 Now, being an extremal generator of $\mbox{Eff}(X)$, if $E_n$ were not fixed, it would be nef. Hence, its restriction to 
 $X_{w[1]} \cong E_n$ would also be nef. However, \cite[Lemma 3.6.]{HY} shows that the restriction of $\mathcal{O}_{X_w}(E_n)$ to $E_n$
 has a negative $D_{n-1}$-coefficient with respect to the nef basis for $\mbox{Pic}(E_n)$. This finishes the induction step, and hence the proof.
\end{proof}

The next lemma is the key to the explicit description of the Zariski chambers. It gives a correspondence between the extremal fixed divisors and the facets of the nef cone.

\begin{lemma}\label{lemextfac}
Let $X=X_w$ be a Bott-Samelson variety. For each fixed extremal ray $E_i, i=2,\dots,n$, of the effective cone of $X$ there is a unique facet $F_i$ of the 
nef cone such that its 
supporting hyperplane $H_i$ separates the extremal ray from the nef cone.
\end{lemma}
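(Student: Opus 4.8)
The plan is to reduce the statement to a sign condition on the matrix expressing the effective basis $E_1,\dots,E_n$ in the $\mathcal O(1)$-basis $D_1,\dots,D_n$. Since $\Nef(X)$ is the simplicial cone spanned by $D_1,\dots,D_n$, its facets are exactly the cones $F_j=\langle D_m\mid m\neq j\rangle$, and in the coordinates dual to the $D$-basis the supporting hyperplane $H_j$ of $F_j$ is the vanishing of the $j$-th coordinate, with $\Nef(X)$ lying in the half-space where that coordinate is nonnegative. Hence, writing $E_i=\sum_{j=1}^n c_{ij}D_j$, the hyperplane $H_j$ separates $E_i$ from $\Nef(X)$ if and only if $c_{ij}<0$. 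So the lemma is equivalent to the assertion that for every $i\in\{2,\dots,n\}$ the row $(c_{i1},\dots,c_{in})$ has \emph{exactly one} negative entry.

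The existence of a separating facet is the easy half: by the preceding lemma $E_i$ is fixed for $i\ge 2$, and (exactly as in the proof of that lemma, where nefness of $E_i$ is excluded via \cite[Lemma~3.6]{HY}) $E_i$ is not nef, so $E_i\notin\Nef(X)=\R_{\ge0}\langle D_1,\dots,D_n\rangle$; therefore at least one coefficient $c_{ij}$ is negative.

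The real content is uniqueness, i.e.\ that at most one $c_{ij}$ is negative, and I would stress at the outset that this \emph{cannot} follow from soft arguments: for two nested full-dimensional simplicial cones an extremal ray of the larger one may well have several negative coordinates with respect to a basis of the smaller one, and neither convexity nor the Zariski-decomposition estimates of Lemma~\ref{lemzarde} rule this out — one has to use the specific combinatorics of Bott-Samelson varieties. I would set up an induction on $n=\dim X$ via the $\P^1$-bundle $\pi_1\colon X_w\to X_{w[1]}$ together with its section $\iota\colon X_{w[1]}\to X_w$ onto $E_n$. Since $E_1,\dots,E_{n-1}$ and $D_1,\dots,D_{n-1}$ are $\pi_1$-pullbacks of the corresponding bases on $X_{w[1]}$ and $\pi_1^*$ is injective, for $2\le i\le n-1$ one gets $c_{in}=0$ while $(c_{i1},\dots,c_{i,n-1})$ is the expansion of $E_i$ on $X_{w[1]}$, which by the inductive hypothesis has a single negative entry; the low-dimensional base case follows from the explicit description of the effective and nef cones of Hirzebruch surfaces. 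The remaining, and genuinely hardest, step is $E_n$ itself: here $E_n=D_n-\pi_1^*\gamma$ for the class $\gamma=\iota^*D_n-\iota^*\mathcal O_X(E_n)\in\Pic(X_{w[1]})$, so the claim for the $n$-th row amounts to showing that $\gamma$ — equivalently $\mathcal O_X(E_n)|_{E_n}$, once the explicit form of $\iota^*D_n$ is known — has exactly one negative coefficient in the $\mathcal O(1)$-basis of $X_{w[1]}$, sharpening the single negative $D_{n-1}$-coefficient provided by \cite[Lemma~3.6]{HY}. Equivalently, and perhaps more transparently, one may simply read this off the explicit change-of-basis matrix from $E$- to $D$-coordinates recorded in \cite{LT04}, which is unipotent lower triangular and whose rows $2,\dots,n$ each contain precisely one negative entry. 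Once uniqueness is in hand, for each $i\in\{2,\dots,n\}$ there is a unique $j=j(i)$ with $c_{ij}<0$, and $F_i:=F_{j(i)}$ is the required facet.
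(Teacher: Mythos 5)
Your reduction to a sign statement is clean and correct: with $E_i=\sum_j c_{ij}D_j$, the lemma is indeed equivalent to each row $i\ge 2$ having exactly one negative entry, and your existence argument (fixed $\Rightarrow$ not nef $\Rightarrow$ at least one negative $c_{ij}$) is fine. But your uniqueness argument has a genuine gap, and the approach is different from the paper's in a way that matters.

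For uniqueness you propose an induction over the $\P^1$-bundle $\pi_1\colon X_w\to X_{w[1]}$. The rows $2,\dots,n-1$ are handled correctly (they are pullbacks, so $c_{in}=0$ and the inductive hypothesis applies). The crucial row is the $n$-th, and there you do not actually prove anything: you reduce it to the claim that a certain class on $X_{w[1]}$ has a single negative (or rather positive, once signs are sorted out in your $E_n=D_n-\pi_1^*\gamma$ identity) coefficient, and then say this ``sharpens'' \cite[Lemma~3.6]{HY} or can be ``simply read off'' an explicit change-of-basis matrix in \cite{LT04}. Lemma~3.6 of \cite{HY} gives you one negative coefficient, not that it is the only one, and I do not see that \cite{LT04} records a matrix with the row-sign property you need; in any case no such statement is cited or proved here. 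So the step you yourself flag as the hardest is left open, and the induction does not close.

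Moreover, you explicitly dismiss ``the Zariski-decomposition estimates of Lemma~\ref{lemzarde}'' as insufficient for uniqueness, but this is exactly the tool the paper uses. The paper's argument is soft and does not require any coordinate computation: it first observes that the facet $F_1=\langle D_1,\dots,D_{n-1}\rangle$ spans the same hyperplane as $\langle E_1,\dots,E_{n-1}\rangle$ and therefore separates no fixed $E_i$; then it shows, using Lemma~\ref{lemzarde} together with uniqueness of Zariski decomposition, that no facet of the nef cone can separate two distinct fixed extremal rays (if $F$ separated both $E_\ell$ and $E_k$, the interiors of $F+\mbox{Cone}(E_\ell)$ and $F+\mbox{Cone}(E_k)$ would overlap, forcing two distinct Zariski decompositions of the same divisor); finally a pigeonhole count with $n-1$ fixed extremal rays, $n-1$ remaining facets, ``at least one'' from existence, and ``at most one'' from the Zariski argument, yields exactly one. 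Your approach would be valuable as an explicit alternative if the computation for $E_n$ were carried out, but as written it neither completes the induction nor matches the paper's argument.
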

 
\begin{proof}
Let $n=\dim X$. Then there  are $n-1$ fixed extremal rays of the effective cone and $n$ facets of the nef cone. We first claim that there is one facet, say $F_1$, such that its 
supporting hyperplane coincides with a supporting hyperplane of a facet of the effective cone. This can be seen as follows. Let us suppose that $X=X_w$ for a reduced expression $w$.
Then, $E_1,\dots,E_{n-1}$, resp. $D_1,\dots D_{n-1}$, define the effective cone, resp. the nef cone of $X_{w[1]}$. This proves that $D_1,\dots, D_{n-1}$ lie in the linear space 
defined by $E_1,\dots, E_{n-1}$. Hence, choosing $E_1,\dots, E_n$ as a basis for $\Pic_{\mathbb{R}}\cong \mathbb{R}^n$, the facet defined by $E_1,\dots, E_{n-1}$ and $D_1,\dots,D_{n-1}$ have 
the supporting hyperplane $H_1=\{x_n=0\}$. 

This facet, $F_1$, does not separate any extremal fixed divisor $E_i$ from the nef cone. 
However, it is clear that for any fixed extremal divisor $E_i$ there is at least on facet $F_j$ of the  nef cone such that its supporting hyperplane does separate the nef cone and $E_i$.
Furthermore, we claim that there are no supporting hyperplanes of the facets $F_i$ such that two distinct extremal rays $E_\ell$ and $E_k$ are separated 
from the nef cone simultaneously. 
Suppose there is, then the interior of the cones $F_i+\mbox{Cone}(E_\ell)$ and $F_i+\mbox{Cone}(E_k)$ do intersect.
But this contradicts the uniqueness of the Zariski decomposition and Lemma \ref{lemzarde}.
Altogether, we have $n-1$ extremal fixed divisors to  which we can individually associate at least one facet. But, as we have seen, one facet cannot correspond to more than one 
extremal divisor. Since there exist only $n$ facets, from which one facet does not correspond to any extremal ray, the claim is proven.
\end{proof}

We now use the above lemma to introduce the following notation.
For a Bott-Samelson variety $X_w$ of dimension $n$, we define $F_i$, $i=2,\dots, n$, as the facets of the nef cone 
which correspond to the fixed divisors $E_i$, $i=2,\dots n$, according to the above 
lemma. Furthermore, we call $F_1$ the remaining facet, which is just the facet spanned by the divisors $D_1,\dots, D_{n-1}$.
Let $H_1,\dots, H_n$ be the supporting hyperplanes corresponding to the facets $F_i$ and denote by $H_i^+$
the closed half spaces corresponding to $H_i$ which contain the nef cone.
Having the notation fixed, we are now in a position to explicitly describe the Zariski chambers of Bott-Samelson varieties.

\begin{thm}
Let $E$ be a fixed divisor with support $E_{i_1}\cup\dots \cup E_{i_\ell}$. Then $\Sigma_E$ is given by 
\begin{align*}
\Sigma_E=\Pi_E:=\left( F_{i_1}\cap \dots \cap F_{i_\ell}\right) + \mbox{Cone}(\{E_{i_1},\dots E_{i_\ell}\}).
\end{align*}
Moreover, $\Sigma_E$ defines  a Zariski chamber which is an $n$-dimensional simplex.
 \end{thm}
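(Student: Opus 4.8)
The plan is to prove the inclusions $\Sigma_E\subseteq\Pi_E$ and $\Pi_E\subseteq\Sigma_E$ separately, and then to read the simplex statement off the shape of $\Pi_E$. Two reductions come first. Since $E_1$ is nef and the sum of a non-fixed effective divisor with any effective divisor is again non-fixed, a fixed divisor cannot have $E_1$ in its support, so $\{i_1,\dots,i_\ell\}\subseteq\{2,\dots,n\}$; moreover $E$ lies in the relative interior of the face $\mathrm{Cone}(E_{i_1},\dots,E_{i_\ell})$ of $\text{Eff}(X)$, so by the lemma on faces of $\text{Eff}(X)$ every divisor in this face is fixed. For each $i$ let $\mu_i$ be a linear form on $N^1(X)_{\mathbb R}$ cutting out the supporting hyperplane $H_i$ of $F_i$, scaled so that $\mu_i\ge 0$ on $\text{Nef}(X)$; then $\text{Nef}(X)=\{x:\mu_i(x)\ge 0\ \forall i\}$, one has $\mu_1\ge 0$ on all of $\text{Eff}(X)$, and Lemma \ref{lemextfac} says exactly that $\mu_i(E_i)<0$ and $\mu_i(E_j)\ge 0$ for $j\ne i$ when $i\ge 2$.

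For $\Sigma_E\subseteq\Pi_E$ I use that $P(D)$ is the \emph{maximal} nef subdivisor of $D$ (Remark \ref{remmax}). Let $D$ be effective with $\text{supp}(N(D))=\{E_{i_1},\dots,E_{i_\ell}\}$, so $D=P(D)+\sum_j c_jE_{i_j}$ with all $c_j>0$. If $\mu_{i_{j_0}}(P(D))>0$ for some $j_0$, then for small $\epsilon>0$ the divisor $P(D)+\epsilon E_{i_{j_0}}$ still has $\mu_{i_{j_0}}>0$ and $\mu_m\ge 0$ for all $m\ne i_{j_0}$ (because $\mu_m(P(D))\ge 0$ and $\mu_m(E_{i_{j_0}})\ge 0$), hence is nef; being a subdivisor of $D$ (take $\epsilon<c_{j_0}$) strictly larger than $P(D)$, this contradicts maximality. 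Thus $\mu_{i_j}(P(D))=0$ for all $j$, i.e. $P(D)\in F_{i_1}\cap\dots\cap F_{i_\ell}$ and $D\in\Pi_E$; taking closures gives $\Sigma_E\subseteq\Pi_E$. Applied to an arbitrary effective $D$, the same computation shows every $D\in\Pi_{S'}$ has $\text{supp}(N(D))\subseteq S'$, and every effective divisor lies in some $\Pi_{S'}$, so the $\Pi_{S'}$ cover $\text{Eff}(X)$.

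The reverse inclusion $\Pi_E\subseteq\Sigma_E$ is the heart of the proof. It is equivalent to a generalisation of Lemma \ref{lemzarde}: for every nef $Q$ on $H_{i_1}\cap\dots\cap H_{i_\ell}$ and all $m_j\ge 0$, $P\bigl(Q+\sum_j m_jE_{i_j}\bigr)=Q$ (letting $Q$ run through $\mathrm{relint}(F_{i_1}\cap\dots\cap F_{i_\ell})$ and the $m_j$ through positive reals yields a dense subset of $\Pi_E$ in the Zariski class of $E$). Because $\mu_m(E_{i_j})\ge 0$ for $m\notin\{i_1,\dots,i_\ell\}$, a divisor $Q+\sum_j r_jE_{i_j}$ with $\vec r\ge 0$ is nef iff $(A\vec r)_a:=\sum_b r_b\,\mu_{i_a}(E_{i_b})\ge 0$ for all $a$; so, $P(D)$ being maximal, this generalisation holds exactly when the $\ell\times\ell$ matrix $A=(\mu_{i_a}(E_{i_b}))_{a,b}$, whose diagonal is negative and whose off-diagonal entries are non-negative, admits only $\vec r=0$ as a solution of $A\vec r\ge 0,\ \vec r\ge 0$ --- equivalently, $-A$ is a non-singular $M$-matrix, equivalently some fixed divisor with support $\{E_{i_1},\dots,E_{i_\ell}\}$ lies strictly in the non-nef half-space of each $F_{i_j}$. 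This last statement is the one point where genuine information about Bott-Samelson varieties is needed. I would prove it by induction on $n=\dim X$ via the $\mathbb P^1$-bundle $\pi_1\colon X_w\to X_{w[1]}$: the identity $H^0(X_w,\pi_1^*\mathcal O(D))\cong H^0(X_{w[1]},\mathcal O(D))$ realises $\text{Eff}(X_{w[1]})$, $\text{Nef}(X_{w[1]})$ as the boundary data carried by $E_1,\dots,E_{n-1}$ and the facets $F_i$, $i\le n-1$, so that the position statement for indices $i_j\le n-1$ is inherited from the inductive hypothesis, while the remaining case of $E_n$ is controlled by the negativity of the normal bundle $\mathcal O_{E_n}(E_n)$ established in \cite[Lemma 3.6]{HY} and \cite{LT04}. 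The main obstacle is precisely this inductive bookkeeping --- tracking how the forms $\mu_i$, the extremal rays $E_i$ and the pairing $E_i\leftrightarrow F_i$ behave under $\pi_1$; an alternative is to substitute the explicit change of basis between $\{E_i\}$ and $\{D_i\}$ and verify the $M$-matrix condition by direct computation.

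Once $\Sigma_E=\Pi_E$ is known, the simplex assertion is formal. The cone $F_{i_1}\cap\dots\cap F_{i_\ell}$ is the intersection of $\ell$ facets of the simplicial cone $\text{Nef}(X)$, hence the $(n-\ell)$-dimensional face spanned by those $n-\ell$ of the $D_k$ lying on all of $H_{i_1},\dots,H_{i_\ell}$; adjoining $E_{i_1},\dots,E_{i_\ell}$ gives $n$ rays, and evaluating the forms $\mu_{i_a}$ on a hypothetical linear relation among them shows their linear independence is exactly the non-singularity of $A$ obtained above. Hence $\Pi_E$ is a simplicial cone of dimension $n$ --- the cone over an $(n-1)$-simplex --- so it has non-empty interior and $\Sigma_E$ is a genuine Zariski chamber.
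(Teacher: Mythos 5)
Your diagnosis is correct and points at a genuine subtlety that is present in the paper as well: checking that $P + \varepsilon E_{i_k}$ fails to be nef along each single coordinate direction does not by itself establish maximality of $P$, since in principle a combination $P + \sum_k \varepsilon_k E_{i_k}$ could still be nef even when no single $P + \varepsilon E_{i_k}$ is. Your reduction to the $M$-matrix property of $-A$, with $A = (\mu_{i_a}(E_{i_b}))_{a,b}$, is exactly the right reformulation. However, the inductive machinery you propose for proving it is both incomplete (you explicitly leave the bookkeeping open) and unnecessary. The clean argument is this. Suppose $\vec r \ge 0$, $\vec r \ne 0$, and $A\vec r \ge 0$, and set $Q := \sum_k r_k E_{i_k}$. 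Then $\mu_{i_a}(Q) = (A\vec r)_a \ge 0$ for $a = 1,\dots,\ell$, while for every $j \notin \{i_1,\dots,i_\ell\}$ Lemma~\ref{lemextfac} gives $\mu_j(E_{i_k}) \ge 0$ (the facet $F_j$ separates only $E_j$), hence $\mu_j(Q) \ge 0$ as well. So $Q$ is nef. On the other hand, $Q$ is a nonzero element of the face $\mbox{Cone}(E_{i_1},\dots,E_{i_\ell})$ of $\mbox{Eff}(X)$, and since $E$ lies in its relative interior and is fixed, the lemma on faces of $\mbox{Eff}(X)$ shows this entire face consists of fixed divisors. But a nonzero fixed divisor on a Mori dream space cannot be nef: an integral nef class is semiample, so some $|mQ|$ would be base-point-free while consisting of the single effective divisor $mQ$, forcing $mQ=0$. (If $\vec r$ is irrational, pass to a rational ray of the rational polyhedral cone $\mbox{Cone}(E_{i_1},\dots,E_{i_\ell}) \cap \mbox{Nef}(X)$.) This contradiction gives the $M$-matrix property in one stroke, with no induction on the $\mathbb P^1$-tower and no appeal to the normal-bundle computation of \cite{HY}.

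The rest of your argument is sound and agrees in substance with the paper: your preliminary reductions, the inclusion $\Sigma_E \subseteq \Pi_E$ via the maximality of $P(D)$, and the derivation of the $n$-simplex structure from the nonsingularity of $A$ are all fine once the $M$-matrix property is in hand. Note also that the hypothesis that $E$ be \emph{fixed} is exactly where the argument above needs it, and is not a decorative assumption: without it, $\mbox{Cone}(E_{i_1},\dots,E_{i_\ell})$ can meet the nef cone nontrivially --- in the three-dimensional example of Section~\ref{s:ex1} one has $E_2 + E_3 = D_3$ --- so the $M$-matrix property, and with it the theorem, genuinely fail for non-fixed supports.
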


\begin{proof}
Let us first prove that $\Pi_E\subseteq \Sigma_E$. Choose $D$ in the relative interior of $\Pi_E$. So 
we can write $D=P+ \sum_{k=1}^\ell \lambda_k E_{i_k}$ for $P\in F_{i_1}\cap \dots \cap F_{i_\ell}$, and $\lambda_k\geq 0$ . We want to prove that this is already the 
Zariski decomposition, i.e., $P(D)=P$. Since $P$ lies in $\bigcap_{k=1,\dots,\ell} F_{i_k}$ it follows that $P+\varepsilon E_{i_k}$ for $\varepsilon>0$ and $k=1,\dots \ell$ is not nef. This proves 
the maximality of $P$ and shows $P(D)=P$ as well as $N(D)=\sum_{k=1}^\ell \lambda_k E_{i_k}$. Hence, $D\in \Sigma_E$ and by the closedness of $\Sigma_E$ the inclusion follows.

We now prove the reverse inclusion $\Sigma_E\subseteq \Pi_E$.
Let $D$ be any effective divisor such that its Zariski decomposition is given by $D=P+\sum_{k=1}^\ell \lambda_kE_{i_k}$ for $\lambda_k>0$. Then in order to 
prove that $D$ lies in $\Pi_E$ we need to show that $P\in F_{i_1}\cap \dots \cap F_{i_\ell}$.
Suppose $P$ does not lie in $F_{i_k}$ for some $k=1,\dots, \ell$. That means $P$ lies in the open half space $(H^{+}_{i_k})_{>0}:=H^{+}_{i_k}\setminus H_{i_k}$. Then for $\varepsilon>0$ small enough, we 
have $P+\varepsilon E_{i_k}$ still lies in $H^{+}_{i_k}$. By Lemma \ref{lemextfac}, $E_{i_k}$ does lie in $H^{+}_{j}$ for $j\neq k$. This proves 
that $P+\varepsilon E_{i_k}$ lies in $H^{+}_{j}$ for all $j=1,\dots,n-1$. Hence,
 $P+\varepsilon E_{i_k}$  is nef, contradicting the maximality of $P$. We have thus shown that $P\in F_{i_k}$.
Again taking the closedness of $\Pi_E$ into account, we obtain the reverse inclusion.

Let us now show that $\Sigma_E$ defines an $n$-dimensional simplex.
Let $F:=F_{i_1}\cap \dots \cap F_{i_r}$ and denote the generators of $F$ by $D_{j_1},\dots,D_{j_{n-r}}$.
If $\Sigma_{E}$ is of dimension $n$ we are done. Suppose that it is of dimension less than $n$.
That means the points $E_{i_1},\dots, E_{i_r},D_{j_1},\dots,D_{j_{n-r}}$ are linearly dependent.
Hence, there are $\lambda_i,\mu_i\geq 0$ for $i=1,\dots, n$ with $\lambda_i\neq \mu_i$ for all $i=1,\dots,n$ such that
\begin{align*}
\sum_{k=1}^{n-r}\lambda_{r+k}D_{j_k} + \sum_{k=1}^r \lambda_k E_{i_k}  =  \sum_{k=1}^{n-r}\mu_{r+k}D_{j_k}+ \sum_{k=1}^r \mu_k E_{i_k}.
\end{align*}
However, we have seen in the first part of the proof that the above decomposition is actually a Zariski decomposition.
Since this decomposition is unique we get
\begin{align*}
\sum_{k=1}^{n-r}\lambda_{r+k}D_{j_k}=\sum_{k=1}^{n-r}\mu_{r+k}D_{j_k}
\end{align*}
and 
\begin{align*}
\sum_{k=1}^r \lambda_k E_{i_k}= \sum_{k=1}^r \mu_k E_{i_k}.
\end{align*}
But both the $D_i$'s and the $E_i$'s are part of a basis of $N^1_\mathbb{R}(X_w)$. Hence, it follows that $\lambda_i=\mu_i$ for all $i=1,\dots,n$.
This contradicts the linear dependence of  $E_{i_1},\dots, E_{i_r},D_{j_1},\dots,D_{j_{n-r}}$.

\end{proof}

\subsection{Mori chambers}
 In this subsection we prove that the previously defined Zariski chambers coincide with the  Mori chambers defined in \cite{hk}.
 Let us first recall what we mean by a Mori chamber. First of all, we call two divisors $D_1$ and $D_2$ on a Mori  dream space Mori equivalent if there is an isomorphism 
 $\mbox{Proj}(R(X,D_1)) \cong \mbox{Proj}(R(X,D_2))$ such that the obvious diagram
 \begin{align*}
 \xymatrix{
 X \ar@{-->}[r]\ar@{-->}[rd]& \mbox{Proj}(R(X,D_1))\ar[d]^{\cong}\\
 & \mbox{Proj}(R(X,D_2))}
 \end{align*}
 commutes.  
Then, Mori chambers are the closure of Mori equivalence classes which have non-empty interior. 

\begin{thm}
Let $X$ be a Bott-Samelson variety. Then each Zariski chamber defines a Mori chamber and vice versa.
\end{thm}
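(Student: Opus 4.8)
The plan is to establish the two directions of the correspondence, leveraging the explicit description of Zariski chambers obtained in the previous subsection. Recall that each Zariski chamber $\Sigma_E$ is an $n$-dimensional simplex of the form $\Pi_E = (F_{i_1}\cap\dots\cap F_{i_\ell}) + \mathrm{Cone}(\{E_{i_1},\dots,E_{i_\ell}\})$, and that on the interior of $\Sigma_E$ both the support of the negative part $N(D)$ and the augmented base locus $\mathbb B_+(D)$ are constant (equal to $E_{i_1}\cup\dots\cup E_{i_\ell}$). Since there are only finitely many subsets of $\{E_2,\dots,E_n\}$ that arise as supports of fixed divisors, the Zariski chambers cover the effective cone and their interiors are disjoint, so it suffices to show that Mori-equivalence is constant on the interior of each $\Sigma_E$ and that it distinguishes different chambers.

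First I would show that any two big divisors $D, D'$ in the interior of a fixed Zariski chamber $\Sigma_E$ are Mori-equivalent. Write $D = P + \sum \lambda_k E_{i_k}$ and $D' = P' + \sum \lambda'_k E_{i_k}$ for their Zariski decompositions with $P, P'$ in the relative interior of the face $F := F_{i_1}\cap\dots\cap F_{i_\ell}$ and $\lambda_k, \lambda'_k > 0$. Since $X$ is a Mori dream space and the positive part of the Zariski decomposition is semiample (being nef on a Mori dream space), the section ring $R(X, D)$ coincides with $R(X, P)$ up to the fixed part: every section of $\mathcal O_X(mD)$ factors through $\sum m\lambda_k E_{i_k}$, so $R(X,D) \cong R(X,P)$ as graded rings (after a Veronese), and hence $\mathrm{Proj}(R(X,D)) \cong \mathrm{Proj}(R(X,P))$ compatibly with the rational map from $X$. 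Thus the Mori equivalence class of $D$ depends only on which face of the nef cone the positive part $P$ lies in. Since $F$ is a relatively open cone contained in $\mathrm{Nef}(X) = \mathrm{Mov}(X)$, and semiample divisors in the relative interior of a common face of the nef cone of a Mori dream space induce the same contraction, $P$ and $P'$ are Mori-equivalent, and therefore so are $D$ and $D'$. This gives that $\Sigma_E$ is contained in a single Mori chamber.

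Conversely, I would show that divisors in the interiors of distinct Zariski chambers $\Sigma_E$ and $\Sigma_{E'}$ are not Mori-equivalent. The cleanest invariant to use is the augmented base locus: by the remark following the definition of Zariski chambers, on $\mathrm{int}(\Sigma_E)$ one has $\mathbb B_+(D) = \mathrm{supp}(N(D)) = E_{i_1}\cup\dots\cup E_{i_\ell}$, which is literally different from the corresponding locus on $\mathrm{int}(\Sigma_{E'})$ when the supports differ. Since $\mathbb B_+$ (equivalently, the exceptional locus of the rational contraction $X \dashrightarrow \mathrm{Proj}(R(X,D))$) is a Mori-equivalence invariant, the two chambers cannot lie in the same Mori chamber. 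The only subtlety is when two different faces $F, F'$ of the nef cone give rise to the same augmented base locus from the nef side — but for big nef (hence semiample) divisors the relevant invariant is the contraction morphism itself, and distinct faces of the nef cone of a Mori dream space induce non-isomorphic (over $X$) contractions; combined with the simplex description this pins down the chamber uniquely. Finally, since the Zariski chambers have disjoint interiors and cover $\mathrm{Eff}(X)$, and each is an $n$-dimensional simplex, each Zariski chamber has non-empty interior and hence qualifies as a Mori chamber, and the correspondence is bijective.

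The main obstacle I expect is the careful bookkeeping in the second direction: ruling out that two different faces of the nef cone (one of which is the special facet $F_1$ spanned by $D_1,\dots,D_{n-1}$, which corresponds to no fixed divisor) could produce Mori-equivalent divisors. This requires knowing that the contraction associated to the interior of a face of $\mathrm{Nef}(X)$ genuinely remembers the face, which follows from the standard Mori dream space theory (the nef cone is subdivided by the contractions into its faces), but must be invoked correctly. One should also be slightly careful that Mori equivalence as defined here is phrased for big divisors, so the identification of chambers happens on the big cone and is then extended by taking closures, exactly as the chambers were defined.
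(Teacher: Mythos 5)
Your proof is correct, but it runs in the opposite direction from the paper's and uses somewhat different ingredients. The paper first shows that every Mori chamber is contained in a Zariski chamber by invoking Okawa's result that two big divisors in the interior of a Mori chamber are \emph{strongly} Mori-equivalent (hence have the same stable base locus, hence the same $\text{supp}(N(\cdot))$); it then upgrades containment to equality by appealing directly to the Hu--Keel structure theorem: each Mori chamber is a Minkowski sum $g_i^*\text{Nef}(Y_i)+\text{Cone}(\text{fixed exceptionals})$, and because $\text{Mov}(X)=\text{Nef}(X)$ the piece $g_i^*\text{Nef}(Y_i)$ is forced to be a face of $\text{Nef}(X)$, which pins the chamber down to $\Sigma_E$. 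You instead start by showing $\Sigma_E$ lies inside a single Mori equivalence class via the section-ring identification $R(X,D)\cong R(X,P(D))$ and the standard fact that the semiample contraction depends only on the face of $\text{Nef}(X)$ containing the positive part; you then separate distinct Zariski chambers using $\mathbb B_+$ (equivalently the exceptional locus of the associated rational contraction) as a Mori-equivalence invariant, which is essentially the content of Okawa's result appearing in your second step rather than your first. Both approaches hinge on Theorem A ($\text{Mov}=\text{Nef}$), but yours is slightly more self-contained on the Mori-dream-space side (you never need the full Hu--Keel chamber description), whereas the paper's is shorter because it lets that structure theorem do the work of identifying the chamber. One small caution: your worry about the facet $F_1$ producing a spurious chamber is already resolved by Lemma \ref{lemextfac} and the explicit simplex description $\Sigma_E=\Pi_E$, in which $F_1$ never appears; you are right that this is not an obstacle, but it is cleaner to dismiss it by citing that description rather than re-deriving the contraction--face correspondence.
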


\begin{proof}
For the Mori chamber $\mathcal{C}=\text{Nef}(X)$ this is clear.
Let us assume that two divisors $D_1$ and $D_2$ lie in the interior of a Mori chamber $\mathcal{C}\neq \text{Nef}$. Then by \cite{O16}, $D_1$ and $D_2$ are strongly Mori equivalent and in particular, their stable base loci coincide. But this just means $\text{supp}(N(D_1))=\text{supp}(N(D_2))$, which in 
turn shows that $D_1$ and $D_2$ lie in the same Zariski chamber.

Let us assume they lie in $\Sigma_E=F+\mbox{Cone}(E_{i_1},\dots,E_{i_\ell})$. By the description in \cite[ Proposition 1.11]{hk}, we know that each Mori chamber is the Minkowski sum of some $g_i^* \text{Nef}(Y_i)$, for a birational 
contraction $g_i \colon X \dashrightarrow Y_i$, and the cone generated by some extremal fixed prime divisor. However, since $\mbox{Mov}(X)=\mbox{Nef}(X)$, $g_i$ is actually 
a regular birational contraction. Thus, $g_i^*\text{Nef}(Y_i)\subset \text{Nef}(X)$, and since two Mori chambers intersect along a common face 
it follows that $g_i^*\text{Nef}(Y_i)$ actually is a face of $\text{Nef}(X)$. However, since $\mathcal{C}$ lies in $\Sigma_E$, the only way to generate a 
chamber with non-empty interior is to take $F$ as the face of the nef cone and $E_{i_1},\dots, E_{i_\ell}$ as our extremal fixed divisors.
This proves that $\mathcal{C}=\Sigma_E$.
\end{proof}

\section{Examples of Mori chamber decomposition}\label{s:ex1}
In this section we give two examples where we compute the Mori chamber decomposition of the effective cone.
Note that all necessary computations were done on a  computer, using Sage.
\subsection{A $3$-dimensional incidence variety}\label{exthreedim}
We start with the three-dimensional incidence variety $Y$ which is described in \cite[Example 2]{SS17}.
It consists of tuples of linear subspaces $(V_1,V_2,V_2^\prime)$ of $\mathbb{C}^3$ such that $V_1$ is one-dimensional and $V_2,V_2^\prime$ are two-dimensional. Furthermore the following incidences hold:
\begin{align*}
\mathbb{C}&\subseteq V_2, \quad V_1 \subseteq V_2, \quad V_1\subseteq V_2^\prime.
\end{align*}
These incidences can be illustrated in the following diagram:
\begin{align*}
\xymatrix{  
		   \mathbb{C}^3 & &\\
		   \mathbb{C}^2\ar@{-}[u] & V_2\ar@{-}[lu] & V_2^\prime\ar@{-}[llu] \\
		   \mathbb{C}\ar@{-}[u]\ar@{-}[ru] & V_1\ar@{-}[u]\ar@{-}[ru]. &
}
\end{align*}
In \cite{SS17} the relations between the divisors $E_1,E_2,E_3$ and $D_1,D_2,D_3$ were computed and are given by
\begin{align*}
D_1&=E_1 \\
D_2&=E_2+E_1\\
D_3&=E_3+E_2.
\end{align*}
Cutting the effective/nef cone with a generic hyperplane, we get the following picture:

\begin{figure}[h]
\begin{center}
\includegraphics[scale=0.8]{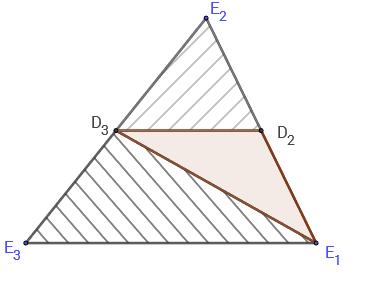}
\caption{The Mori decomposition of $Y$}
\end{center}
\end{figure}
We can see from the picture that there are exactly three Mori chambers, which are given by the nef cone, $\Nef(X)=\text{Cone}(E_1,D_2,D_3)$, and the two cones  $\text{Cone}(E_2,D_2,D_3)$ and $\text{Cone}(E_3,D_1,D_3)$.

\subsection{A $4$-dimensional incidence variety} \label{fourdim}
Let us now consider a four-dimensional incidence variety $X$ which consists of tuples of linear subspaces $(V_1,V_2,V_2^\prime,V_3)$ such that $V_1$ is one-dimensional, 
$V_2,V_2^\prime$ are two-dimensional, and $V_3$ is three-dimensional. Furthermore, the incidences are described in the following diagram:
\begin{align*}
\xymatrix{ \mathbb{C}^4 & & \\ 
		   \mathbb{C}^3\ar@{-}[u] & V_3\ar@{-}[lu] &\\
		   \mathbb{C}^2\ar@{-}[u] & V_2\ar@{-}[lu] & V_2^\prime\ar@{-}[llu]\ar@{-}[lu] \\
		   \mathbb{C}\ar@{-}[u]\ar@{-}[ru] & V_1\ar@{-}[u]\ar@{-}[ru] &.
}
\end{align*}

\noindent We define a map $q\colon X\to Y$ by  $(V_1,V_2,V_2^\prime,V_3)\mapsto(V_1,V_2,V_2^\prime)$. This makes $X$ a Bott-Samelson variety, given as a $\mathbb{P}^1$ bundle over $Y$.

We proceed by describing the new occurring divisors $E_4$ and $D_4$.
The divisor $E_4$ is the image of the embedding of $Y$ into $X$ by mapping 
\begin{align*}
(V_1,V_2,V_2^\prime)\mapsto (V_1,V_2,V_2^\prime,\mathbb{C}^3).
\end{align*}
Denote by $\mathcal{V}_1,\mathcal{V}_2, \mathcal{V}_2^\prime, \mathcal{V}_3$ the tautological vector bundles with fibres $V_1,V_2,V_2^\prime,V_3$ over the point $(V_1,V_2,V_2^\prime,V_3)\in X$.
Then $D_4$ is equal to $\det(\mathcal{V}_3)^*$.

We can describe the divisor $E_4$ as the zero set of the section
\begin{align*}
s_{E_4}\in &H^0(X,\text{Hom}(\mathbb{C}^3/\mathcal{V}_2^\prime,\mathbb{C}^4/\mathcal{V}_3))=H^0(X,(\mathbb{C}^3/\mathcal{V}_2\prime\otimes \left(\mathbb{C}^4/\mathcal{V}_3\right)^*)\\
&s_{E_4}(V_1,V_2,V_2^\prime,V_3)\colon (v+V^\prime_2)\mapsto v+V_3.
\end{align*}

Furthermore we have the following identifications, which can be readily checked:
\begin{align*}
&\mathbb{C}^4/\mathcal{V}^\prime_2\cong (\det \mathcal{V}_2^\prime)^*\\
&\mathbb{C}^4/\mathcal{V}_3\cong (\det \mathcal{V}_3)^*
\end{align*}
From this we can conclude $E_4=D_4-D_3$, which leads to
\begin{align*}
D_4=E_4+E_3+E_2.
\end{align*}
Let us now determine all the fixed faces of the effective cone. We already know from the above picture that $\text{Cone}(E_2,E_3)$ is not fixed. This implies that the face $\text{Cone}(E_2,E_3,E_4)$ is not fixed either.
The only left faces which are not known to be fixed are $\text{Cone}(E_3,E_4)$ and $\text{Cone}(E_2,E_4)$.
Let us prove that $E_2+E_4$ is fixed. Indeed, if it were not fixed, then there would exist a subdivisor $P=\lambda E_2+\mu E_4$, for $0\leq \lambda,\mu \leq 1$, which is nef.
But
\begin{align*}
 \lambda E_2+\mu E_4= \lambda (D_2-D_1)+ \mu (D_4-D_3)
\end{align*}
which is never nef as long as $\max(\lambda,\mu)>0$. But $\max(\lambda,\mu)=0$ implies  $P=0$. Thus $E_2+E_4$ is fixed, and therefore all the divisors in $\text{Cone}(E_2,E_4)$ are.
Similarly, we can prove that $\text{Cone}(E_3,E_4)$ is fixed.
This shows that $\Eff(X)$ decomposes into six Mori chambers, corresponding to the fixed divisors $E_2,E_3,E_4,E_2+E_4,E_3+E_4$, and the nef cone.

The following table displays which facets of the nef cone correspond to which extremal rays. Here, we fix the basis $(E_1,\dots, E_n)$.

\begin{figure}[h]
\begin{center}
{\def\arraystretch{2}\tabcolsep=10pt
\begin{tabular}{ccc}
\hline 
\textbf{Facet generators(Nef Cone)} & \textbf{Supp. Half-space} & \textbf{Opposite extr. ray} \\ 
\hline 
 
$D_1,D_2,D_3$& $x_4\geq 0$ & does not separate \\  
$D_1,D_2,D_4$ & $x_3-x_4\geq 0$& $E_4$ \\ 
$D_1,D_3,D_4$& $x_2-x_3\geq 0$ & $E_3$ \\ 
$D_2,D_3,D_4$& $x_1-x_2+x_3\geq 0$ & $E_2$ \\ 
\hline 
\end{tabular} }
\end{center}
\caption{Correspondence facets-extremal rays of $X$}
\end{figure}
This leads to the following notation for the facets:
\begin{align*}
F_1&=\text{Cone}(D_1,D_2,D_3)\\
F_2&=\text{Cone}(D_2,D_3,D_4)\\
F_3&=\text{Cone}(D_1,D_3,D_4)\\
F_4&=\text{Cone}(D_1,D_2,D_4).
\end{align*}
Now, we have all the necessary information to explicitly describe the Mori chambers of $X$:
\begin{figure}[h]
\begin{center}
{\def\arraystretch{2}\tabcolsep=10pt
\begin{tabular}{ccc}
\hline 
\textbf{Negative Support} & \textbf{Mori chamber}& \textbf{Color} \\ 
\hline 
$\emptyset$&  $\text{Cone}(D_1,D_2,D_3,D_4) $ &  $ {\tikzcircle[fill=blue]{5pt}}$\\ 

$E_2$& $\text{Cone}(E_2)+F_2=\text{Cone}(E_2,D_2,D_3,D_4) $& $ {\tikzcircle[fill=gray]{5pt}}$\\ 
 
$E_3$ & $\text{Cone}(E_3)+F_3=\text{Cone}(E_3,D_1,D_3,D_4)$& $ {\tikzcircle[fill=green]{5pt}}$ \\ 
 
$E_4$& $\text{Cone}(E_4)+F_4=\text{Cone}(E_4,D_1,D_2,D_4)$& $ {\tikzcircle[fill=magenta]{5pt}}$\\ 
 
$E_2\cup E_4$& $\text{Cone}(E_2,E_4)+F_2\cap F_4=\text{Cone}(E_2,E_4,D_2,D_4)$& $ {\tikzcircle[fill=plum]{5pt}}$\\ 
 
$E_3\cup E_4$ & $\text{Cone}(E_3,E_4)+F_3\cap F_4=\text{Cone}(E_3,E_4,D_1,D_4)$& $ {\tikzcircle[fill=brown]{5pt}}$ \\ 
\hline 
\end{tabular} 
}
\end{center}
\caption{Mori chambers of $X$}
\end{figure}
\newpage
Finally, let us illustrate the Mori decomposition of $X$ by plotting a slice of the chamber decomposition with a generic hyperplane.
For a better overview we display the decomposition from two different perspectives.

\begin{figure}[h]
\begin{center}
\includegraphics[scale=0.8]{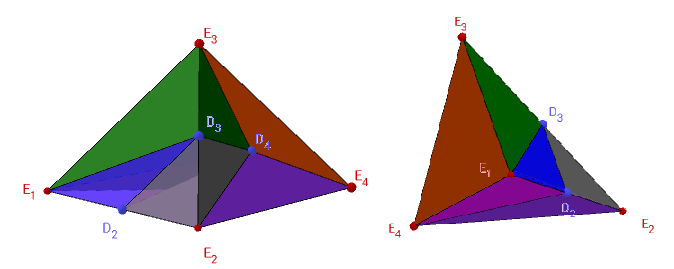}
\end{center}
\caption{Mori chamber decomposition of $X$}
\end{figure}

\section{Global Newton-Okounkov bodies on Bott-Samelson varieties and the global value semigroup}

In this section, we consider (global) Newton-Okounkov bodies with respect to the so-called `horizontal' flag. Furthermore, we show that the global semigroup $\Gamma_{Y_\bullet}(X)$ is finitely generated.
\subsection{The horizontal flag}
We  start with a description of the \emph{horizontal flag}.
Let $w=(s_1,\dots, s_n)$ be a reduced word and denote by $X_w$ the corresponding Bott-Samelson variety. Let furthermore $E_1,\dots, E_n$ be the effective basis, satisfying $E_i\cong X_{w(i)}$ for $w(i)=(s_1,\dots,\hat{s}_i,\dots,s_n)$.
Moreover, let us define, for $i=1,\dots, n$, the truncated sequence $w[i]=(s_1,\dots,s_{n-i})$. 
Then, if $w$ is a reduced sequence, the sequence $w[i]$ is still reduced and $X_{w[i]}\subseteq X_w$ is a closed subvariety of codimension $i$ which is again a Bott-Samelson variety. We can also wr
ite $X_{w[i]}=E_n\cap \dots \cap E_{n-i+1}$ and represent $X_{w[i]}$ as a closed subvariety of $X_w$ as follows
\begin{align*}
X_{w[i]}=\{[(p_1,\dots,p_n)]\in X_w \ \vert \ p_n=\dots=p_{n-i+1}=e \}.
\end{align*}
We define the \emph{horizontal flag} $Y_\bullet$ as follows.
For $i=1,\ldots, n$, we set
\begin{align*}
Y_{i}:=X_{w[i]}.
\end{align*}
We write ${Y_k}_\bullet=\left(Y_k\supseteq Y_{k+1}\supseteq \dots \supseteq Y_n \right)$
for the induced flag on $Y_k$.
Note that the effective basis of $Y_i$ is given by $(E_1)_{\vert Y_i},\dots (E_{n-i})_{\vert Y_{n-i}}$, and the $\mathcal{O}(1)$-basis of $Y_i$ is given by
$(D_1)_{\vert Y_i},\dots (D_{n-i})_{\vert Y_{n-i}}$. For the sake of simplifying the notation, we shall omit the restrictions and simply write $E_1,\dots, E_{n-i}$ and $D_1,\dots, D_{n-i}$ for divisors 
on $Y_i$ whenever no confusion should arise.

\begin{remark}
Note that Newton-Okounkov bodies with respect to the horizontal flag were already studied in \cite{HY}. In the mentioned article they show the finite generation of the semigroup 
$\Gamma(X_w,D)$ under a condition which they called condition $\textbf{(P)}$. In the following, we shall generalize this result to all divisors $D$ on $X_w$.
It is also worth to note that the techniques used in the above mentioned article substantially differ from ours. While their approach relied on representation theory and combinatorics, we mainly use our earlier 
established Mori-theoretic properties of Bott-Samelson varieties and results from \cite{LT04}.
\end{remark}

\subsection{Rational polyhedrality of global Newton-Okounkov bodies}

The key to proving the finite generation of the semigroup, as well as the rational polyhedrality of the global Newton-Okounkov body, is the following lemma.

\begin{lemma}\label{lemcohomology}
Let $D$ be a nef divisor on $X_w$. Then the restriction map
\begin{align*}
H^0(X,\mathcal{O}_X(D))\to H^0(E_n,\mathcal{O}_{E_n}(D))
\end{align*}
is surjective.
\end{lemma}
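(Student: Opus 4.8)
The plan is to reduce the statement to a cohomology vanishing computation on the fiber bundle structure. Recall that $E_n$ sits inside $X=X_w$ as the image of the embedding $\iota\colon X_{w[1]}\hookrightarrow X_w$, and that there is a $\mathbb P^1$-bundle $\pi_1\colon X_w\to X_{w[1]}$; moreover $E_n$ is a section of $\pi_1$. First I would write down the defining short exact sequence of sheaves
\begin{align*}
0\to \mathcal O_X(D-E_n)\to \mathcal O_X(D)\to \mathcal O_{E_n}(D)\to 0,
\end{align*}
so that surjectivity of the restriction map on global sections follows once we know $H^1(X,\mathcal O_X(D-E_n))=0$. The task is therefore to establish this vanishing for every nef $D$.

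Next I would compute $D-E_n$ in terms of the $\mathcal O(1)$-basis. Since $\mathcal O_X(E_n)$ is (up to a pullback from $X_{w[1]}$) the relative tautological class of the $\mathbb P^1$-bundle $\pi_1$, subtracting $E_n$ from $D$ drops the $D_n$-coefficient by one and leaves a class that is still ``sufficiently positive'' relative to $\pi_1$; concretely one expects $D-E_n$ to be $\pi_1$-nef, or at least to have vanishing higher direct images $R^q(\pi_1)_*\mathcal O_X(D-E_n)$ for $q>0$ together with a nef pushforward $(\pi_1)_*\mathcal O_X(D-E_n)$ on $X_{w[1]}$. Then the Leray spectral sequence for $\pi_1$ reduces $H^1(X,\mathcal O_X(D-E_n))$ to $H^1(X_{w[1]},(\pi_1)_*\mathcal O_X(D-E_n))$, and one finishes by induction on $\dim X$ using the known cohomology vanishing for nef (indeed, more generally, for the relevant effective) divisors on Bott-Samelson varieties established in \cite{LT04}. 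The base case $\dim X=1$, i.e. $X=\mathbb P^1$, is immediate.

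Alternatively, and perhaps more in the spirit of the references, one can invoke the explicit description of $H^0(X_w,\mathcal O_{X_w}(D))$ for nef $D$ in \cite{LT04} directly: there the section space is identified (via the tower of $\mathbb P^1$-bundles) with an iterated Demazure-type construction, and the restriction to the divisor $E_n\cong X_{w[1]}$ corresponds precisely to forgetting the last $\mathbb P^1$-factor. Surjectivity then amounts to the statement that every section over $X_{w[1]}$ extends, which is exactly the normality/`splitting' property of the bundle structure recorded there. I would phrase the argument with whichever of these two inputs from \cite{LT04} is cleanest to cite.

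The main obstacle is controlling the positivity of $D-E_n$: subtracting the fixed divisor $E_n$ from a nef class generally destroys nefness on $X$, so the vanishing $H^1(X,\mathcal O_X(D-E_n))=0$ cannot be quoted from a naive Kawamata--Viehweg-type statement. The point to be checked carefully is that $D-E_n$, while possibly not nef on $X$, is still positive \emph{relative to} $\pi_1$ and has nef pushforward, so that the inductive hypothesis applies; this relative-positivity bookkeeping, done in the $\mathcal O(1)$-basis, is where the real work lies.
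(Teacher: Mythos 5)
Your reduction via the short exact sequence
\begin{align*}
0\to \mathcal O_X(D-E_n)\to \mathcal O_X(D)\to \mathcal O_{E_n}(D)\to 0
\end{align*}
to the vanishing $H^1(X,\mathcal O_X(D-E_n))=0$ is exactly the paper's argument, and the paper settles that vanishing by citing \cite[Thm.~7.4]{LT04} directly, as you suggest in your second (``alternatively'') route. The additional Leray-spectral-sequence/induction machinery you sketch for re-proving the vanishing is unnecessary once that theorem is quoted.
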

\begin{proof}
In \cite[Thm 7.4]{LT04} it was  proved in particular that for a nef divisor $D$, the first cohomology group $H^1(X,\mathcal{O}(D-E_n))$ vanishes. This shows that the restriction morphism:
\begin{align*}
H^0(X,\mathcal{O}_X(D))\to H^0(E_n,\mathcal{O}_{E_n}(D))
\end{align*} 
is surjective if $D$ is nef.
\end{proof}
\begin{thm}\label{thmglobal}
Let $X=X_w$ be a Bott-Samelson variety and $Y_\bullet$ be the horizontal flag.
Then the global Newton-Okounkov body $\Delta_{Y_\bullet}(X_w)$ is rational polyhedral.
\end{thm}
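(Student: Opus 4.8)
The plan is to reduce the rational polyhedrality of $\Delta_{Y_\bullet}(X_w)$ to the finite generation of the global semigroup $\Gamma_{Y_\bullet}(X_w)$, and to prove the latter by induction on $n=\dim X_w$, using Lemma \ref{lemcohomology} to pass between $X_w$ and $E_n=X_{w[1]}$. Since $\Delta_{Y_\bullet}(X_w)=\overline{\mathrm{Cone}(\Gamma_{Y_\bullet}(X_w))}$, it suffices to show that $\Gamma_{Y_\bullet}(X_w)$ is a finitely generated semigroup: the cone over a finitely generated semigroup is rational polyhedral, and taking closure of a rational polyhedral cone changes nothing.

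First I would set up the induction. For $n=1$ we have $X_w=\mathbb{P}^1$ and the statement is elementary. For the inductive step, recall that by construction the valuation $\nu=\nu_{Y_\bullet}$ computes the order of vanishing along $E_n=Y_1$ in its first coordinate, and the remaining coordinates of $\nu(s)$ (for $s$ not vanishing identically on $E_n$) are obtained by restricting an appropriate twist of $s$ to $E_n$ and applying the valuation $\nu_{{Y_1}_\bullet}$ attached to the horizontal flag on $E_n=X_{w[1]}$. The key point supplied by Lemma \ref{lemcohomology} is that for a \emph{nef} divisor $D$ the restriction $H^0(X_w,\mathcal{O}(D))\to H^0(E_n,\mathcal{O}_{E_n}(D))$ is surjective; combined with Theorem \ref{thma} (so that $\mathrm{Mov}(X_w)=\mathrm{Nef}(X_w)$) and the explicit Zariski-chamber description, every effective $D$ decomposes as $D=P(D)+N(D)$ where $P(D)$ is nef and $N(D)$ is a non-negative combination of the fixed divisors $E_2,\dots,E_n$. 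I would then split a general section $s\in H^0(X_w,\mathcal{O}(D))$ according to its order of vanishing $a=\mathrm{ord}_{E_n}(s)$: writing $s=s_{E_n}^{a}\cdot s'$ where $s_{E_n}$ is the canonical section cutting out $E_n$, the factor $s'$ restricts to a nonzero section on $E_n$ of the divisor $(D-aE_n)|_{E_n}$, and its valuation vector records the $(n-1)$ remaining coordinates. This exhibits $\Gamma_{Y_\bullet}(X_w)$ as built from (i) the contribution of the single fixed divisor $E_n$ (one extra generator, corresponding to $s_{E_n}$, i.e. the vector $(1,0,\dots,0)$ together with the class $E_n$), and (ii) the image of $\Gamma_{{Y_1}_\bullet}(E_n)$ under a linear map induced by the restriction $\mathrm{Pic}(X_w)\to\mathrm{Pic}(E_n)$, lifted via surjectivity of restriction on the nef part.

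The main obstacle, and the heart of the argument, is that Lemma \ref{lemcohomology} gives surjectivity of restriction only for nef $D$, whereas we must handle all effective $D$. Here I would use the Zariski decomposition $D=P(D)+N(D)$ from the previous section: sections of $\mathcal{O}(D)$ are the same as sections of $\mathcal{O}(P(D))$ multiplied by the fixed section $s_{N(D)}$ of the negative part (since $N(D)$ is a combination of fixed divisors, $H^0(X_w,\mathcal{O}(D))\cong s_{N(D)}\cdot H^0(X_w,\mathcal{O}(P(D)))$), so the problem for arbitrary $D$ reduces to the problem for the nef divisor $P(D)$, where Lemma \ref{lemcohomology} applies. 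One must check that $\mathrm{ord}_{E_n}(s_{N(D)})$ and the restriction $s_{N(D)}|_{E_n}$ behave compatibly with the horizontal flag on $E_n$ — this is where the explicit relations among the $E_i$ (e.g. that $E_n$ is disjoint in support from the other fixed divisors appearing, or meets them transversally inside $E_n$ as sub-Bott-Samelson varieties) enter. Granting this, $\Gamma_{Y_\bullet}(X_w)$ is generated by finitely many lifts of generators of $\Gamma_{{Y_1}_\bullet}(E_n)$ (finite by induction), finitely many generators accounting for the nef directions via a Minkowski-sum/`net' argument over the finitely many Zariski chambers, and the single generator $s_{E_n}$; hence it is finitely generated, and $\Delta_{Y_\bullet}(X_w)=\overline{\mathrm{Cone}(\Gamma_{Y_\bullet}(X_w))}$ is rational polyhedral.

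Finally I would record that this simultaneously proves Theorem \ref{thmc}: the above analysis shows $\Gamma_{Y_\bullet}(X_w)$ itself — not merely the cone over it — is finitely generated. I expect the write-up to be short precisely because Theorem \ref{thma} removes the need to pass to higher birational models: all base loci of movable divisors are empty, Zariski decomposition is just the movable/fixed splitting, and the restriction maps we need are the honest geometric ones on $X_w$ and its natural sub-Bott-Samelson varieties, controlled by \cite{LT04}.
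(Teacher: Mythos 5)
Your overall plan---deduce rational polyhedrality from finite generation of the global value semigroup, proving the latter by induction using Lemma~\ref{lemcohomology} and Zariski decomposition---is a genuinely different route from the paper's, and it has a real gap. The paper's proof of this theorem stays entirely at the level of \emph{closed cones}: it invokes \cite[Theorem~3.3]{SS17} to reduce the polyhedrality of $\Delta_{Y_\bullet}(X_w)$ to the polyhedrality of $\overline{\mathrm{Cone}(S(D_1,\dots,D_n))}$, where $S$ records valuations of sections of \emph{nef} divisors with $\nu_1=0$; it then shows that $\overline{\mathrm{Cone}(S)}$ equals $q^{-1}(\overline{\mathrm{Cone}(S_1)})$ intersected with an explicit rational polyhedral cone (that identity being the separate lemma, where Lemma~\ref{lemcohomology} enters), and inducts. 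Zariski decomposition plays no role in that proof at all; it only appears later, in Proposition~\ref{propsemigroup} onward. You instead aim directly at finite generation of $\Gamma_{Y_\bullet}(X_w)$, which in the paper is Theorem~\ref{thmglobalsem} and is proved \emph{after} Theorem~\ref{thmglobal} by means of a chain of intermediate results (Proposition~\ref{propsemigroup}, Corollary~\ref{corbig}, the identity $\Delta^{num}=\Delta^{val}$, and Corollary~\ref{corvalpoints}) plus Gordan's lemma.

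The gap is precisely the passage from your fiberwise description of sections (factoring $s=s_{E_n}^a\cdot s'$, splitting $s'=s_{N}\cdot s_{P}$, lifting from $E_n$ via Lemma~\ref{lemcohomology}) to a \emph{finite} generating set for $\Gamma_{Y_\bullet}(X_w)$. Two concrete problems. First, the Zariski decomposition $D=P(D)+N(D)$ of an integral divisor need not be integral---this is exactly why the paper singles out the hypothesis of ``integral Zariski decompositions'' for the normality statement (Proposition~\ref{propnormal})---so the asserted isomorphism $H^0(X,\mathcal{O}(D))\cong s_{N(D)}\cdot H^0(X,\mathcal{O}(P(D)))$ does not hold at the level of a single integral $D$; one must pass to multiples, and then the argument reproduces the saturation issue the paper handles with Corollary~\ref{corvalpoints}. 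Second, lifting generators of $\Gamma_{{Y_1}_\bullet}(E_n)$ to $\Gamma_{Y_\bullet}(X_w)$ requires, for each generator $(\nu',D')$ with $D'\in\mathrm{Pic}(E_n)$, a choice of $D\in\mathrm{Pic}(X_w)$ restricting to $D'$, and the fiber of $\mathrm{Pic}(X_w)\to\mathrm{Pic}(E_n)$ is a full one-dimensional lattice coset; it is not clear without further argument that the lifts you need across all Zariski chambers can be packaged into a finite set, nor that the resulting elements generate. Your ``Minkowski-sum/net argument over Zariski chambers'' is where this would have to happen, but it is not spelled out and cannot be made to work without some version of the saturation/normality machinery the paper develops separately. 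The cone-level identity the paper proves (in the lemma following Theorem~\ref{thmglobal}) is weaker than what you claim but is exactly enough to close the induction for rational polyhedrality, and avoids all of these subtleties by taking closures before comparing.
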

\begin{proof}
The proof is based on results already established in \cite{SS17}.
Recall that the divisors $D_1,\dots, D_n$ form the $\mathcal{O}(1)$-basis on $X_w$. Denote by $\Gamma(D_1,\dots,D_n)$ the semigroup generated by $D_1,\dots,D_n$ in $N^1(X)$. 
We define the semigroup 
\begin{align*}
S(D_1,\dots,D_n):=\{(\nu_{Y_\bullet}(s),D)\in \mathbb{N}^n\times \Gamma(D_1,\dots,D_n) \ \vert & \\
s\in H^0(X_w,\mathcal{O}(D)), \nu_1(s)=0\}&.
\end{align*}
It follows from \cite[Theorem 3.3]{SS17} that $\Delta_{Y_\bullet}(X_w)$ is rational polyhedral if $\text{Cone}(S(D_1,\dots,D_n))$ is rational polyhedral.
Consider now the semigroup
\begin{align*}
S_1(D_1,\dots,D_n):=\{(\nu_{Y_1}(s),D)\in \mathbb{N}^{n-1}\times \Gamma({D_1}\vert_{Y_1},\dots,{D_n}\vert_{Y_1}) \vert & \\
s\in H^0(Y_1,\mathcal{O}_{Y_1}(D))\}&.
\end{align*}
We define a natural map
\begin{align*}
q_0: S(D_1,\dots, D_n) &\rightarrow S_1(D_1,\dots, D_n)\\
 (\nu(s), D)&\mapsto ((\nu_2(s),\dots,\nu_n(s)), D  \cdot Y_1),
\end{align*}
which extends to the linear map
\begin{align}
q: \qquad \R^n \oplus N^1(X_w)_\R &\to \R^{n-1} \oplus N^1(Y_1)_\R, \label{E: linmapq}\\
((x_1,\dots,x_n), D) &\mapsto ((x_2,\dots, x_n), D \cdot Y_1). \nonumber
\end{align}

\noindent We now use the following fact which we shall prove in the below lemma: 
\begin{align*}
	&\overline{\text{Cone}(S(D_1,\dots,D_n))}= \\  
	&q^{-1}(\overline{\text{Cone}(S_1(D_1,\dots,D_n))}) \cap (\{0\} \times \R^{n-1}_{\geq 0} 
	\times \text{Cone}(D_1,\ldots, D_n)).  \nonumber
	\end{align*}

This identity shows that $\overline{\text{Cone}(S(D_1,\dots,D_n))}$ is rational polyhedral if $\overline{\text{Cone}(S_1(D_1,\dots,D_n))}$ is rational polyhedral.

Now, we proceed by induction on the dimension $n$ of $X_w$.
If $n=1$, then $X_w\cong\mathbb{P}^1$. It can be easily checked that the global Newton-Okounkov body of $\mathbb{P}^1$ with respect to any admissible flag is rational polyhedral.

Assume now that the assertion is true for $n-1$. Then, $\Delta_{{Y_1}_\bullet}(Y_1)$ is rational polyhedral, and we have
\begin{align*} 
\overline{\text{Cone}(S_1(D_1,\dots,D_n))} =pr_2^{-1}\left(\overline{\text{Cone}({D_1}_{\vert Y_1},\dots,{D_n}_{\vert Y_1})} \right)\cap \Delta_{{Y_1}_\bullet}(Y_1) .
\end{align*}
But this implies that $\overline{\text{Cone}(S_1(D_1,\dots,D_n))}$ and $\overline{\text{Cone}(S(D_1,\dots,D_n))}$ are rational polyhedral.
Finally, this proves that $\Delta_{Y_\bullet}(X_w)$ is rational polyhedral. 
\end{proof}
\begin{lemma}
With the notation introduced above, we have
\begin{align*}
	&\overline{\text{Cone}(S(D_1,\dots,D_n))}= \\  
	&q^{-1}(\overline{\text{Cone}(S_1(D_1,\dots,D_n))}) \cap (\{0\} \times \R^{n-1}_{\geq 0} 
	\times \text{Cone}(D_1,\ldots, D_n)).  \nonumber
	\end{align*}
\end{lemma}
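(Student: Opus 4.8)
The plan is to prove the two inclusions separately. Write $C = \overline{\text{Cone}(S(D_1,\dots,D_n))}$ and let $R$ denote the right-hand side
$q^{-1}(\overline{\text{Cone}(S_1(D_1,\dots,D_n))}) \cap (\{0\} \times \R^{n-1}_{\geq 0} \times \text{Cone}(D_1,\ldots, D_n))$.
The inclusion $C \subseteq R$ is the soft direction: every generator $(\nu_{Y_\bullet}(s),D)$ of $S(D_1,\dots,D_n)$ has $\nu_1(s)=0$ by definition, so it lies in $\{0\}\times \R^{n-1}_{\geq 0}\times\text{Cone}(D_1,\dots,D_n)$; and $q_0$ maps it into $S_1(D_1,\dots,D_n)$ because the restriction of a section of $\mathcal{O}_X(D)$ to $Y_1 = E_n$ lies in $H^0(Y_1,\mathcal{O}_{Y_1}(D\cdot Y_1))$, and the horizontal flag on $X$ induces the horizontal flag on $Y_1$ so that $\nu_{Y_1}$ of the restriction reads off coordinates $(\nu_2(s),\dots,\nu_n(s))$ — this last point needs a word recalling that $\nu_1(s)=0$ means $s$ does not vanish along $E_n=Y_1$, so its restriction is a nonzero section and the higher valuation coordinates are genuinely computed on $Y_1$. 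Passing to closures of cones and using linearity of $q$ gives $C\subseteq R$.

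The reverse inclusion $R \subseteq C$ is where the content lies, and it is exactly here that Lemma \ref{lemcohomology} is used. The first step is to reduce to lattice points: both $C$ and $R$ are rational polyhedral-type closed cones (or at least closures of cones generated by semigroups living in a lattice), so it suffices to show that every lattice point of $R$ — more precisely every point of the relevant semigroup — lies in $\text{Cone}(S(D_1,\dots,D_n))$, and then pass to closures. So take $(0, (x_2,\dots,x_n), D)$ with $D \in \Gamma(D_1,\dots,D_n)$ and with image $q(0,(x_2,\dots,x_n),D) = ((x_2,\dots,x_n), D\cdot Y_1)$ lying in $\text{Cone}(S_1(D_1,\dots,D_n))$. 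After clearing denominators we may assume there is a section $t \in H^0(Y_1, \mathcal{O}_{Y_1}(D\cdot Y_1))$ with $\nu_{Y_1}(t) = (x_2,\dots,x_n)$. The key point: $D \in \Gamma(D_1,\dots,D_n)$ is a nonnegative combination of the $\mathcal{O}(1)$-generators, hence nef, so Lemma \ref{lemcohomology} applies and the restriction $H^0(X,\mathcal{O}_X(D)) \to H^0(E_n,\mathcal{O}_{E_n}(D)) = H^0(Y_1,\mathcal{O}_{Y_1}(D\cdot Y_1))$ is surjective. Therefore $t = s|_{Y_1}$ for some $s \in H^0(X,\mathcal{O}_X(D))$. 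Since $s$ restricts to the nonzero section $t$ on $E_n$, we have $\nu_1(s) = \text{ord}_{E_n}(s) = 0$, and then by construction of the valuation $\nu_{Y_\bullet}$ via the flag the remaining coordinates of $\nu_{Y_\bullet}(s)$ are computed as $\nu_{Y_1}(s|_{Y_1}) = \nu_{Y_1}(t) = (x_2,\dots,x_n)$. Hence $(\nu_{Y_\bullet}(s), D) = (0,(x_2,\dots,x_n),D) \in S(D_1,\dots,D_n)$, which is what we needed.

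To finish, one must handle the rational/real scaling carefully: a point of $R$ is a limit of nonnegative rational combinations of semigroup points of the form just treated, and the argument above shows each such semigroup point lies in $S(D_1,\dots,D_n) \subseteq \text{Cone}(S(D_1,\dots,D_n))$; taking the closure gives $R \subseteq C$. The main obstacle — and the reason the lemma is not purely formal — is the surjectivity input: without Lemma \ref{lemcohomology} one could only say that the \emph{image} of $H^0(X,\mathcal{O}_X(D))$ in $H^0(Y_1,\mathcal{O}_{Y_1}(D\cdot Y_1))$ gives a subsemigroup, and there would be no reason for $q^{-1}$ of $\text{Cone}(S_1)$ to be hit. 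A minor technical point to be careful about is that $D\cdot Y_1$ is the correct divisor class to restrict to (i.e.\ $\mathcal{O}_X(D)|_{E_n} = \mathcal{O}_{E_n}(D)$ in the notation of Lemma \ref{lemcohomology}), and that $D \in \Gamma(D_1,\dots,D_n)$ — rather than merely $D$ effective — is exactly the hypothesis needed to guarantee nefness and hence applicability of the lemma; this is why the semigroup $S(D_1,\dots,D_n)$ is defined over $\Gamma(D_1,\dots,D_n)$ in the first place.
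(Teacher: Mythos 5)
Your proposal follows essentially the same route as the paper: the soft inclusion $\subseteq$ by observing that semigroup generators of $S$ have first valuation coordinate $0$ and restrict to sections on $Y_1$, and the substantive inclusion $\supseteq$ by lifting sections from $Y_1$ via the surjectivity of $H^0(X,\mathcal{O}_X(D))\to H^0(E_n,\mathcal{O}_{E_n}(D))$ from Lemma~\ref{lemcohomology}, which is exactly the key input the paper uses. The only point where you are slightly less careful than the paper is the reduction step: you say it suffices to treat ``lattice points of $R$'' and then ``pass to closures,'' but a general lattice point of $R$ has image under $q$ only in $\overline{\text{Cone}(S_1)}$, which need not lie in $\text{Cone}(S_1)$ itself on the boundary, so the scaling-to-a-section step does not apply verbatim. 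The paper closes this by restricting to rational (hence integral after scaling) points in the \emph{interior} of $R$, where the image does land in $\text{Cone}(S_1)$, and then uses closedness of both sides to conclude; you should make the same restriction explicit. With that small adjustment your argument coincides with the paper's.
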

\begin{proof}
In order to show the inclusion `$\subseteq$', it is enough to show this inclusion for the semigroup $S(D_1,\dots,D_n)$ since both sides are closed convex cones. 
So, let $(0,a_2,\dots,a_n,D)\in  S(D_1,\dots,D_n)$. Then this is clearly a preimage of $(a_2,\dots,a_n,D \cdot Y_1)$ under $q$ and lies in $(\{0\} \times \R^{n-1}_{\geq 0} \times \text{Cone}(D_1,\ldots, D_n))$. 
This shows the first inclusion. 

For the second inclusion `$\supseteq$', note that both sides are closed sets. Hence, it is enough to show that the inclusion holds for rational points 
in the interior, and--since both sides are also cones--it is enough to 
show the inclusion for integral points in the interior.
Let therefore
\begin{align*}
(0,a,D)\in q^{-1}(\overline{\text{Cone}(S_1(D_1,\dots,D_n))}) \cap (\{0\} \times \R^{n-1}_{\geq 0} 
	\times \text{Cone}(D_1,\ldots, D_n))
\end{align*}	
be an integral point in the interior. By definition, 
\begin{align*}
(a,D|Y_1)\in \text{Cone}(S_1(D_1,\dots,D_n)).
\end{align*}
 After scaling appropriately, we can assume $(ka,kD \cdot Y_1)\in S_1(D_1,\dots, D_n)$.
This means that there is a section $s\in H^0(Y_1,\mathcal{O}_{Y_1}(kD))$ such that $\nu_{Y_{1\bullet}}(s)=ka$.
By Lemma \ref{lemcohomology}, we can lift the section $s$ to a section $\tilde{s}\in H^0(X,\mathcal{O}_X(kD))$ such that $\tilde{s}|_{Y_1}=s$. Then, we clearly have 
$\nu_{Y_\bullet}(\tilde{s})=(0,ka)$. This proves that $(0,ka,kD)\in S(D_1,\dots,D_n))$, which implies
$(0,a,D)\in \overline{\text{Cone}(S(D_1,\dots,D_n))}$.

\end{proof}

\subsection{Value semigroups}
We need the following notation: for a fixed flag  $Y_\bullet$ on $X$ and an effective divisor $D\neq 0$, we define the following semigroup
\begin{align*}
\Gamma_{Y_\bullet}(D)=\bigsqcup_{k\in \mathbb{N}}\Gamma_{Y_\bullet}(D)_k:= \{(\nu_{Y_\bullet}(s),k)\ \vert  \ k\in \mathbb{N}, \ 
s\in H^0(X,\mathcal{O}_X(kD))\setminus \{0\} \}
\end{align*}
as well as
\begin{align*}
\Gamma_{Y_\bullet}(D)_{\nu_1=a}=\bigsqcup_{k\in\mathbb{N}}\left(\Gamma_{Y_\bullet}(D)_{\nu_1=a}\right) _k=\{(\nu_1(s),\dots,\nu_n(s),k) \ \vert \ k\in\mathbb{N}&\\
 \ s\in H^0(X,\mathcal{O}_X(kD)), \ \nu_1(s)=ak \}.
\end{align*}
If $D$ is a $\mathbb{Q}$-divisor such that $kD$ is integral for a given $k\in \mathbb{N}$, we define 
$$\Gamma_{Y_\bullet}(D)_k=\{(\nu(s),k) \ \vert \ s\in H^0(X,\mathcal{O}_X(kD)).$$
Furthermore, for $a>0$ we abbreviate $$D_a:=D-aY_1,$$ as well as 
\begin{align}
P_a:=P(D_a) \quad \mbox{and} \quad  N_a:=N(D_a) \label{E: Zariskiofdiff}
\end{align}

\begin{prop}\label{propsemigroup}
Let $X=X_w$ be an $n$-dimensional Bott-Samelson variety. Let $Y_\bullet$ be an admissible flag with $Y_1=X_{w[1]}=E_n$. Let $D$ be an effective divisor. 
Then, the identity
\begin{align*}
\left(\Gamma_{Y_\bullet}(D)_{\nu_1=a}\right)_k=\{ak\}\times \left( \Gamma_{{Y_1}_\bullet}({P_a}\vert_{Y_1})_k + k\cdot \nu_{{Y_1}_\bullet}({N_a}\vert_{Y_1})\right)
\end{align*}
holds for all $a\in \mathbb{Q}$ such that $\Gamma_{Y_\bullet}(D)_{\nu_1=a}\neq \emptyset$, and $k> 0$ such that $kP_a$ and $kN_a$ are integral.
\end{prop}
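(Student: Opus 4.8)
The plan is to fix $k>0$ with $kP_a$ and $kN_a$ integral (in particular $ak\in\mathbb Z$, otherwise both sides are trivially empty) and to build an explicit bijection between the sections of $\mathcal O_X(kD)$ whose $\nu_1$-value is exactly $ak$ and the sections of $\mathcal O_X(kP_a)$ which do not vanish identically on $Y_1=E_n$; once this is done one simply reads off the $\nu_{Y_\bullet}$-vectors on the two sides. Throughout, $s_{E_n}$ and $s_{kN_a}$ denote the canonical sections of $\mathcal O_X(E_n)$ and $\mathcal O_X(kN_a)$.

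First I would record that $E_n$ is not a component of $N_a=N_\sigma(D_a)$. By the hypothesis $\Gamma_{Y_\bullet}(D)_{\nu_1=a}\neq\emptyset$ there is some $k_0$ and a section $s_0\in H^0(X,\mathcal O_X(k_0D))$ with $\text{ord}_{E_n}(s_0)=ak_0$; then $s_0/s_{E_n}^{ak_0}$ is a section of $\mathcal O_X(k_0D_a)$ that does not vanish along $E_n$, so $\sigma_{E_n}(D_a)=0$, i.e.\ $\text{mult}_{E_n}(N_a)=0$. In particular $N_a\vert_{Y_1}$ is a well-defined effective $\mathbb Q$-divisor and $s_{kN_a}\vert_{Y_1}$ is the canonical section of $\mathcal O_{Y_1}(kN_a\vert_{Y_1})$.

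Now the bijection. A section $s$ of $\mathcal O_X(kD)$ has $\nu_1(s)=ak$ if and only if $s=s_{E_n}^{ak}\cdot s'$ with $s'\in H^0(X,\mathcal O_X(kD_a))$ and $\text{ord}_{E_n}(s')=0$. By the characterization of the negative part of the $\sigma$-decomposition \cite{N04}, together with the homogeneity $N_\sigma(kD_a)=kN_a$, every such $s'$ is divisible by $s_{kN_a}$, say $s'=s_{kN_a}\cdot u$ with $u\in H^0(X,\mathcal O_X(kP_a))$; and since $E_n\not\subseteq\text{supp}(N_a)$ we have $\text{ord}_{E_n}(s')=\text{ord}_{E_n}(u)$, so the condition $\text{ord}_{E_n}(s')=0$ is equivalent to $u\vert_{Y_1}\neq 0$. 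This gives the bijection $s\leftrightarrow u$ via $s=s_{E_n}^{ak}s_{kN_a}u$. Under it, the construction of $\nu_{Y_\bullet}$ (recalling that $Y_1=E_n$ is globally cut out by $s_{E_n}$) gives $\nu_1(s)=ak$ and $(\nu_2(s),\dots,\nu_n(s))=\nu_{{Y_1}_\bullet}(s_{kN_a}\vert_{Y_1}\cdot u\vert_{Y_1})$; by additivity of the valuation-like function (and the resulting rescaling convention for the $\mathbb Q$-divisor $N_a\vert_{Y_1}$) this becomes $(\nu_2(s),\dots,\nu_n(s))=k\,\nu_{{Y_1}_\bullet}(N_a\vert_{Y_1})+\nu_{{Y_1}_\bullet}(u\vert_{Y_1})$.

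Finally I would feed in Lemma \ref{lemcohomology}. The positive part $P_a$ of a Zariski decomposition on a Bott-Samelson variety is movable, hence nef by Theorem \ref{thma}, so $kP_a$ is nef and the restriction $H^0(X,\mathcal O_X(kP_a))\to H^0(Y_1,\mathcal O_{Y_1}(kP_a))$ is surjective. Therefore, as $u$ runs over the sections with $u\vert_{Y_1}\neq 0$, the restriction $u\vert_{Y_1}$ runs over all of $H^0(Y_1,\mathcal O_{Y_1}(kP_a\vert_{Y_1}))\setminus\{0\}$, and hence $\{\nu_{{Y_1}_\bullet}(u\vert_{Y_1})\}$ is exactly the $\nu$-part of $\Gamma_{{Y_1}_\bullet}(P_a\vert_{Y_1})_k$. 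Combining the last two paragraphs yields the asserted identity; in the degenerate case where $(\Gamma_{Y_\bullet}(D)_{\nu_1=a})_k$ happens to be empty for this particular $k$, the bijection and the surjectivity force $H^0(Y_1,\mathcal O_{Y_1}(kP_a\vert_{Y_1}))=0$, making the right-hand side empty as well. The main obstacle is the bookkeeping around the Zariski decomposition of the $\mathbb Q$-divisor $D_a$ --- verifying that $E_n$ is not a component of $N_a$ (so that restriction to $Y_1$ is legitimate and $s_{kN_a}$ restricts to the canonical section of $kN_a\vert_{Y_1}$) and that $N_a$ genuinely divides every section of every multiple of $D_a$ --- both of which rest on Nakayama's theory plus the nonemptiness hypothesis; the conceptual engine, by contrast, is the surjectivity of Lemma \ref{lemcohomology}, which is what makes the restricted linear series of $P_a$ to $Y_1$ be the full linear series.
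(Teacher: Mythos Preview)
Your proof is correct and follows essentially the same approach as the paper's: both first verify that $Y_1=E_n$ is not in the support of $N_a$ using the nonemptiness hypothesis, then strip off $s_{E_n}^{ak}$ and the fixed part $s_{kN_a}$ to reduce to sections of $kP_a$, and finally invoke Lemma~\ref{lemcohomology} (with $P_a$ nef by Theorem~\ref{thma}) to identify the restricted linear series with the full one on $Y_1$. Your write-up is in fact a bit more explicit than the paper's about why every section of $kD_a$ is divisible by $s_{kN_a}$ and why $P_a$ is nef, and your handling of the degenerate empty case is a nice touch.
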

\begin{proof}
 We claim that $N_a$ does not contain $Y_1$ in its support. Suppose it does, then
\begin{align*}
D=P_a+(N_a+ tY_1)
\end{align*}
is the Zariski decomposition of $D$, i.e. $N(D)=N_a+tY_1$. However, this proves $\nu_1(s)>ak$ for each $s\in H^0(X,\mathcal{O}_X(kD))$, which contradicts the fact 
that $\Gamma_{Y_\bullet}(D)_{\nu_1=a}\neq \emptyset$.

For each $a\in\mathbb{Q}$, we choose $k\in \mathbb{N}$ such that $kP_a$ and $kN_a$ are integral.
Let us now show the inclusion `$\subseteq$'. Let $s\in H^0(X,\mathcal{O}_X(kD))\setminus\{0\}$ such that $\nu_1(s)=ak$. Then consider $\tilde{s}:=s/s_{Y_1}^{ka}$.
We can decompose this section $\tilde{s}=s_{P}\otimes s_{N}$.
By construction,
\begin{align}\label{eqval}
\nu_{Y_\bullet}(s)=(ak,\nu_{{Y_1}_\bullet}(\tilde{s}\vert_{Y_1}))=(ak,\nu_{{Y_1}_\bullet}({s_{P}}\vert_{Y_1})+k\cdot \nu_{{Y_1}_\bullet}({N_a}\vert_{Y_1})).
\end{align}
This proves the desired inclusion.

Now, in order to prove the reverse inclusion `$\supseteq$', consider an arbitrary section $\tilde{s}_{P}\in H^0(Y_1,\mathcal{O}_{Y_1}(kP_a))$. By Lemma \ref{lemcohomology}, there is a 
section $s_P \in H^0(X,\mathcal{O}_X(kP_a))$ which coincides with $\tilde{s}_{P}$ on $Y_1$.
Define $\tilde{s}:= s_P\otimes s_N$, where $s_N$ is a section of $\mathcal{O}(kN_a)$, and finally, set $s:=\tilde{s}\otimes s_{Y_1}^{ak}\in H^0(X,\mathcal{O}_X(kD))$.
Then the reverse inclusion follows from equation \eqref{eqval}.

\end{proof}
\begin{cor}\label{corbig}
Let $D$ be a big divisor on $X_w$, and $Y_\bullet$ be the horizontal flag.
Then for every $a\in\mathbb{Q}$ such that $D-aY_1$ is big and $(D-aY_1)\vert_{Y_1}$ is big, we have
\begin{align*}
\Delta_{Y_\bullet}(D)_{\nu_1=a}=\Delta_{{Y_1}_\bullet}({P_a}\vert_{Y_1})+\nu_{{Y_1}_\bullet}({N_a}\vert_{Y_1}).
\end{align*}
\end{cor}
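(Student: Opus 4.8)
The plan is to deduce Corollary \ref{corbig} from Proposition \ref{propsemigroup} by passing from the semigroup-level identity to the level of Newton-Okounkov bodies, i.e.\ by taking closed cones and slicing. First I would fix $a \in \mathbb{Q}$ such that $D_a = D - aY_1$ is big and $(D_a)|_{Y_1}$ is big; in particular $\Gamma_{Y_\bullet}(D)_{\nu_1 = a} \neq \emptyset$, so Proposition \ref{propsemigroup} applies and gives, for all $k > 0$ with $kP_a, kN_a$ integral,
\begin{align*}
\left(\Gamma_{Y_\bullet}(D)_{\nu_1 = a}\right)_k = \{ak\} \times \left( \Gamma_{{Y_1}_\bullet}({P_a}|_{Y_1})_k + k \cdot \nu_{{Y_1}_\bullet}({N_a}|_{Y_1}) \right).
\end{align*}
Now recall that by definition $\Delta_{Y_\bullet}(D)_{\nu_1 = a} = \left( \overline{\text{Cone}(\Gamma_{Y_\bullet}(D))} \cap (\mathbb{R}^n \times \{1\}) \right) \cap \{x_1 = a\}$, and since $D$ is big the valuative and numerical bodies coincide. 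The right-hand side of the Proposition is, after dividing by $k$ and letting $k \to \infty$ along a suitable sequence, the Minkowski sum of $\Delta_{{Y_1}_\bullet}(P_a|_{Y_1})$ with the single point $\nu_{{Y_1}_\bullet}(N_a|_{Y_1})$ — this is exactly what we want, provided we can interchange the closure of the cone with the slicing by $\{x_1 = a\}$.

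The key technical point is the following. Writing $C = \overline{\text{Cone}(\Gamma_{Y_\bullet}(D))}$, one has to argue that the slice $C \cap \{x_1 = a\} \cap \{x_{n+1} = 1\}$ is recovered from the graded pieces $\left(\Gamma_{Y_\bullet}(D)_{\nu_1 = a}\right)_k$ alone, i.e.\ that points of $\Delta_{Y_\bullet}(D)$ with first coordinate exactly $a$ are limits of normalized valuation vectors of sections $s$ with $\nu_1(s) = ak$ (and not merely $\nu_1(s)/k \to a$ with $\nu_1(s)$ possibly strictly larger or smaller than $ak$). Here the bigness hypotheses on $D_a$ and $(D_a)|_{Y_1}$ are what make this work: bigness of $D_a$ guarantees that the slice $\{x_1 = a\}$ meets the interior of $\Delta_{Y_\bullet}(D)$ (so the slice has the expected dimension and is the closure of its interior), and a standard semicontinuity/convexity argument — of the type used in \cite{LM09} to describe slices of Okounkov bodies — shows that the fiber of the body over $x_1 = a$ is the closure of $\bigcup_k \frac{1}{k}\left(\Gamma_{Y_\bullet}(D)_{\nu_1 = a}\right)_k$. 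I would invoke \cite[\S4]{LM09}, where exactly such a slicing formula $\Delta_{Y_\bullet}(D)_{\nu_1 = a} = \Delta_{{Y_1}_\bullet}(\text{something})$ is proven, to justify this step cleanly.

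Granting that, the conclusion is immediate: taking closed convex hulls of the normalized graded pieces in the identity above turns the right-hand side into
\begin{align*}
\overline{\bigcup_k \tfrac{1}{k}\left( \Gamma_{{Y_1}_\bullet}({P_a}|_{Y_1})_k + k \cdot \nu_{{Y_1}_\bullet}({N_a}|_{Y_1})\right)} = \Delta_{{Y_1}_\bullet}({P_a}|_{Y_1}) + \nu_{{Y_1}_\bullet}({N_a}|_{Y_1}),
\end{align*}
using that $(D_a)|_{Y_1} = (P_a + N_a)|_{Y_1}$ is big so that $\Delta_{{Y_1}_\bullet}(P_a|_{Y_1})$ is a genuine Newton-Okounkov body equal to $\overline{\bigcup_k \frac{1}{k}\Gamma_{{Y_1}_\bullet}(P_a|_{Y_1})_k}$, and that Minkowski summing with a fixed point commutes with closure. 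Dropping the redundant first coordinate $a$ (which is constant on the slice) yields precisely
\begin{align*}
\Delta_{Y_\bullet}(D)_{\nu_1 = a} = \Delta_{{Y_1}_\bullet}({P_a}|_{Y_1}) + \nu_{{Y_1}_\bullet}({N_a}|_{Y_1}).
\end{align*}
I expect the main obstacle to be precisely the interchange of closure and slicing — making sure that no "mass" of the Okounkov body over $x_1 = a$ is lost or gained by restricting to sections with $\nu_1(s)$ exactly equal to $ak$ rather than asymptotically equal — which is where the two bigness hypotheses (on $D_a$ and on its restriction) are essential and where citing the slicing results of \cite{LM09} does the real work.
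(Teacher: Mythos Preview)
Your approach is essentially the same as the paper's: reduce to Proposition \ref{propsemigroup} and invoke the slicing results of \cite[\S4]{LM09} (the paper cites (4.8) specifically) to justify the interchange of closure and slicing, using bigness of $D_a$ to ensure the slice $\{x_1=a\}$ meets the interior of $\Delta_{Y_\bullet}(D)$.

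There is, however, one small gap. Your claim that ``bigness of $D_a$ guarantees that the slice $\{x_1=a\}$ meets the interior of $\Delta_{Y_\bullet}(D)$'' fails at $a=0$: since $\nu_1\geq 0$ for all sections, the body lies in the half-space $\{x_1\geq 0\}$, so the hyperplane $\{x_1=0\}$ is supporting and never meets the interior, regardless of how big $D_0=D$ is. The slicing argument from \cite{LM09} therefore does not apply directly at the boundary value $a=0$. The paper handles this case separately by a continuity argument: replace $D$ by $D+\varepsilon A$ for an ample class $A$ and let $\varepsilon\to 0$. You should add this step (or an equivalent one) to close the gap.
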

\begin{proof}
If $a>0$ then it follows from the fact that $D-aY_1$ is big that $\{a\}\times \mathbb{R}^{n-1}$ meets the interior of $\Delta_{Y_\bullet}(D)$. But then we have 
\begin{align*}
\text{Cone}(\Gamma_{Y_\bullet}(D)_{\nu_1=a})=\text{Cone}(\Gamma_{Y_\bullet}(D))_{\nu_1=a}
\end{align*}
which was established in \cite[(4.8)]{LM09} and proves the proposition for $a>0$.
For $a=0$, replace $D$ by $D+\varepsilon A$, for an ample class $A$, and let $\varepsilon \to 0$.
\end{proof}

\subsection{Numerical and valuative Newton-Okounkov bodies}
In this subsection we consider Newton-Okounkov bodies of effective (not necessarily big) divisors.

\begin{lemma}
Let $X_w$ be an $n$-dimensional Bott-Samelson variety. Let $Y_\bullet$ be the horizontal flag. Let $D$ be an effective divisor. 
Then, the inclusion
\begin{align*}
\Delta^{val}_{Y_\bullet}(D)_{\nu_1=a}\supseteq \{a\}\times \left( \Delta^{val}_{{Y_1}_\bullet}({P_a}\vert_{Y_1})+ \nu_{{Y_1}_\bullet}({N_{a}}\vert_{Y_1})\right)
\end{align*}
holds for all $a\in \mathbb{Q}$  such that $\Delta^{val}_{Y_\bullet}(D)_{\nu_1=a}\neq \emptyset$.
\end{lemma}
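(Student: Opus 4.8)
The statement to be proved is the inclusion
\begin{align*}
\Delta^{val}_{Y_\bullet}(D)_{\nu_1=a}\supseteq \{a\}\times \left( \Delta^{val}_{{Y_1}_\bullet}({P_a}\vert_{Y_1})+ \nu_{{Y_1}_\bullet}({N_{a}}\vert_{Y_1})\right),
\end{align*}
and the natural approach is to deduce it directly from Proposition \ref{propsemigroup} by passing to cones and slicing. First I would recall that $\Delta^{val}_{Y_\bullet}(D)$ is by definition $\frac{1}{m}\overline{\text{Cone}(\Gamma_{Y_\bullet}(mD))}\cap(\R^n\times\{1\})$ for $m$ making $mD$ integral, and that the slice $\Delta^{val}_{Y_\bullet}(D)_{\nu_1=a}$ is the intersection of this body with the affine subspace $\{x_1=a\}$. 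Now fix $a\in\Q$ with $\Delta^{val}_{Y_\bullet}(D)_{\nu_1=a}\neq\emptyset$; as in the proof of Proposition \ref{propsemigroup}, this is exactly the condition ensuring $Y_1\not\subseteq\operatorname{supp}(N_a)$, so the Zariski decomposition $D_a=P_a+N_a$ restricts sensibly to $Y_1$.

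The key step is to feed the semigroup identity
\begin{align*}
\left(\Gamma_{Y_\bullet}(D)_{\nu_1=a}\right)_k=\{ak\}\times \left( \Gamma_{{Y_1}_\bullet}({P_a}\vert_{Y_1})_k + k\cdot \nu_{{Y_1}_\bullet}({N_a}\vert_{Y_1})\right)
\end{align*}
from Proposition \ref{propsemigroup} into the cone construction. Concretely, I would take a point of the right-hand side of the desired inclusion, write it as $(a, \xi + \nu_{{Y_1}_\bullet}({N_a}\vert_{Y_1}))$ with $\xi\in\Delta^{val}_{{Y_1}_\bullet}({P_a}\vert_{Y_1})$, and approximate $\xi$ by points $\frac{1}{k}\nu_{{Y_1}_\bullet}(t_k)$ with $t_k\in H^0(Y_1,\mathcal O_{Y_1}(kP_a))$ and $k\to\infty$ (taking $k$ along the subsequence for which $kP_a$ and $kN_a$ are both integral). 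By Proposition \ref{propsemigroup} each such datum produces an element of $(\Gamma_{Y_\bullet}(D)_{\nu_1=a})_k$, hence a section $s_k\in H^0(X,\mathcal O_X(kD))$ with $\frac{1}{k}\nu_{Y_\bullet}(s_k) = (a,\ \frac{1}{k}\nu_{{Y_1}_\bullet}(t_k) + \nu_{{Y_1}_\bullet}({N_a}\vert_{Y_1}))$, and this sequence of rescaled valuation vectors converges to $(a,\xi+\nu_{{Y_1}_\bullet}({N_a}\vert_{Y_1}))$. Since $\Delta^{val}_{Y_\bullet}(D)$ is the closed cone on $\Gamma_{Y_\bullet}(D)$ sliced at height one, the limit lies in $\Delta^{val}_{Y_\bullet}(D)_{\nu_1=a}$, giving the inclusion on a dense subset and hence, by closedness, everywhere.

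\textbf{Main obstacle.} The routine part is the cone/limit bookkeeping; the one genuine subtlety is matching the $k$'s and the integrality hypotheses. Proposition \ref{propsemigroup} only applies for $k$ with $kP_a$ and $kN_a$ integral, and the valuative bodies $\Delta^{val}$ are defined using multiples that clear the denominators of $D$ (resp.\ $P_a\vert_{Y_1}$); I would need to check these arithmetic progressions are compatible, i.e.\ pass to a common divisible subsequence of $k$'s, and verify that $\Delta^{val}_{{Y_1}_\bullet}({P_a}\vert_{Y_1})$ is indeed the closure of $\{\frac1k\nu_{{Y_1}_\bullet}(t) : t\in H^0(Y_1,\mathcal O_{Y_1}(kP_a))\}$ along that subsequence (which is immediate from the definition since subsequences of a divisible sequence still compute the same closed cone). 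A secondary point to state cleanly is that the map $k\mapsto k\cdot\nu_{{Y_1}_\bullet}(N_a\vert_{Y_1})$ appearing in Proposition \ref{propsemigroup} rescales to the constant translation vector $\nu_{{Y_1}_\bullet}(N_a\vert_{Y_1})$ after dividing by $k$, which is what produces the Minkowski summand $+\nu_{{Y_1}_\bullet}({N_a}\vert_{Y_1})$ on the right-hand side. No reverse inclusion is claimed here, so there is no need to worry about whether every section of $kD$ with $\nu_1=ak$ arises this way beyond what Proposition \ref{propsemigroup} already gives.
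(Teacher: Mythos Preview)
Your proposal is correct and follows essentially the same route as the paper: both deduce the inclusion directly from Proposition \ref{propsemigroup} by passing to cones, using that $\text{Cone}(\Gamma_{Y_\bullet}(D)_{\nu_1=a})\subseteq\text{Cone}(\Gamma_{Y_\bullet}(D))_{\nu_1=a}$ (which is exactly what your limit argument unpacks). The paper states this in one line, whereas you spell out the approximation and the divisibility bookkeeping explicitly, but the underlying idea is identical.
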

\begin{proof}
The above inclusion follows from Proposition \ref{propsemigroup} and the fact that $\text{Cone}(\Gamma_{Y_\bullet}(D)_{\nu_1=a})\subseteq\text{Cone}(\Gamma_{Y_\bullet}(D))_{\nu_1=a}$ for all rational $a\geq 0$.

\end{proof}

\begin{lemma}
Let $D$ be an effective divisor on $X_w$. Let $Y_\bullet$ be the horizontal flag.
Then,
\begin{align*}
\Delta_{Y_\bullet}^{num}(D)_{\nu_1=a}\subseteq \{a\}\times \left( \Delta_{{Y_1}_\bullet}^{num}({P_a}\vert_{Y_1})+\nu_{{Y_1}_\bullet}({N_a}\vert_{Y_1})\right)
\end{align*}
for all $a\in \mathbb{Q}$  such that $\Delta^{num}_{Y_\bullet}(D)_{\nu_1=a}\neq \emptyset$.
\end{lemma}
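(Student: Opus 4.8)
The plan is to mirror the structure of the preceding valuative lemma, but to work with the global Newton-Okounkov body instead of the valuative one, exploiting that $\Delta^{num}_{Y_\bullet}(D)$ is by definition the fibre of $\Delta_{Y_\bullet}(X_w)$ over the class $[D]$. First I would reduce to the case where the relevant divisors are big: for a general rational $a$ with $\Delta^{num}_{Y_\bullet}(D)_{\nu_1=a}\neq\emptyset$, perturb $D$ to $D+\varepsilon A$ for an ample $A$ so that $D_a=D-aY_1$ becomes big and $P_a|_{Y_1}=(D_a)|_{Y_1}$ becomes big on $Y_1$ (using that $Y_1$ is a Bott-Samelson variety and bigness is an open condition); then apply Corollary \ref{corbig}, which gives the desired identity $\Delta_{Y_\bullet}(D+\varepsilon A)_{\nu_1=a}=\{a\}\times(\Delta_{{Y_1}_\bullet}(P_a(D+\varepsilon A)|_{Y_1})+\nu_{{Y_1}_\bullet}(N_a(D+\varepsilon A)|_{Y_1}))$, and finally let $\varepsilon\to 0$.

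The step that needs the most care is controlling the behaviour of the Zariski decomposition $D_a=P_a+N_a$ under the perturbation $\varepsilon\to 0$. Here I would use that on a Bott-Samelson variety Theorem \ref{thma} guarantees $\mathrm{Mov}(X)=\mathrm{Nef}(X)$, so the $\sigma$-decomposition is a genuine Zariski decomposition; moreover by the explicit chamber description (Theorem on Zariski chambers) the positive and negative parts vary in a controlled, piecewise-linear fashion as one moves in $N^1(X)_\R$. In particular, for $\varepsilon$ small, $\mathrm{supp}(N_a(D+\varepsilon A))$ is contained in the support attached to the chamber containing $D_a$ in its closure, so that $N_a(D+\varepsilon A)|_{Y_1}\to N_a|_{Y_1}$ and $P_a(D+\varepsilon A)|_{Y_1}\to P_a|_{Y_1}$. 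The continuity of $D\mapsto\Delta_{{Y_1}_\bullet}(D)$ on the big cone (which holds since these are fibres of the globally defined closed convex cone $\Delta_{{Y_1}_\bullet}(Y_1)$, rational polyhedral by Theorem \ref{thmglobal}) then lets us pass to the limit on the right-hand side, while semicontinuity of numerical Newton-Okounkov bodies handles the left-hand side, yielding the inclusion $\Delta^{num}_{Y_\bullet}(D)_{\nu_1=a}\subseteq\{a\}\times(\Delta^{num}_{{Y_1}_\bullet}(P_a|_{Y_1})+\nu_{{Y_1}_\bullet}(N_a|_{Y_1}))$.

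An alternative, cleaner route—which I would actually prefer—avoids perturbation altogether by arguing directly on the level of the global cone. Since $\Delta^{num}_{Y_\bullet}(D)=\Delta_{Y_\bullet}(X_w)\cap(\R^n\times\{D\})$ and the global body is generated by the global semigroup $\Gamma_{Y_\bullet}(X_w)$, any point of $\Delta^{num}_{Y_\bullet}(D)_{\nu_1=a}$ is a limit of normalized valuation vectors $\frac1{m_k}\nu_{Y_\bullet}(s_k)$ with $s_k\in H^0(X,\mathcal{O}_X(m_k D_k))$ and $[D_k]\to[D]$, $\frac1{m_k}\nu_1(s_k)\to a$. Dividing each $s_k$ by $s_{Y_1}^{\lfloor m_k a\rfloor}$ and restricting to $Y_1$, together with the estimate $\mathrm{ord}_{Y_1}(s)\geq m_k\cdot(\text{coefficient of }Y_1\text{ in }N(D_k))$ forced by the Zariski decomposition (as in the proof of Proposition \ref{propsemigroup}, where one shows $Y_1\not\subseteq\mathrm{supp}(N_a)$ when the fibre is nonempty), produces sections on $Y_1$ whose valuation vectors converge into $\Delta^{num}_{{Y_1}_\bullet}(P_a|_{Y_1})+\nu_{{Y_1}_\bullet}(N_a|_{Y_1})$; taking the closure gives the claimed inclusion. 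The main obstacle in either approach is the same: ensuring that the negative part $N_a$, hence the shift $\nu_{{Y_1}_\bullet}(N_a|_{Y_1})$, is the correct one in the limit and does not jump—this is exactly where the Mori-chamber picture of Section 4 is indispensable, and it is why this lemma, unlike its valuative counterpart, only yields an inclusion rather than an equality.
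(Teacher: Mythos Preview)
Your first (perturbation) approach is essentially the paper's proof: one writes $\Delta^{num}_{Y_\bullet}(D)=\bigcap_k \Delta_{Y_\bullet}(D+\tfrac1k A)$ for a fixed ample $A$, applies Corollary~\ref{corbig} to each $D+\tfrac1k A$ at level $\nu_1=a$, and then controls the limit of $N_{a,1/k}:=N(D+\tfrac1k A-aY_1)$ via the Zariski chamber structure---the paper makes this explicit by noting that for $k$ large, $kD_a+A$ and $D_a$ lie in the same chamber, so $N(kD_a+A)=(k-K)N(D_a)+N(KD_a+A)$ and hence $\tfrac1k N(kD_a+A)\to N_a$; this is precisely your ``piecewise-linear variation'' argument. (Minor slip: the bigness hypothesis in Corollary~\ref{corbig} is on $(D_a)|_{Y_1}$, not on $P_a|_{Y_1}$.) Your second approach, arguing directly with sequences $(s_k,D_k)$ in the global semigroup with $D_k\to D$, is not in the paper; it is a reasonable alternative, but the passage from restricted sections to points of $\Delta^{num}_{{Y_1}_\bullet}(P_a|_{Y_1})+\nu_{{Y_1}_\bullet}(N_a|_{Y_1})$ still hinges on the same chamber-based convergence $N(D_k-aY_1)\to N_a$, so it does not really bypass the delicate step.
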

\begin{proof}
We now fix an ample divisor $A$.
Note that it follows from the assumption on $a$ that $D-aY_1$ is effective, and that $Y_1$ is not contained in the support of the negative part of $D$. Therefore, the 
divisor $D-aY_1+1/k\cdot A$, as well as its restriction to $Y_1$, is big. 

Now, we can use Corollary \ref{corbig} to deduce that
\begin{align*}
\Delta_{Y_\bullet}^{num}(D)_{\nu_1=a}&=\left(\bigcap_{k\in \mathbb{N}} \Delta_{Y_\bullet}(D+1/k\cdot A)\right) \cap \left( \{a\}\times \mathbb{R}^{n-1}\right)\\
&=\bigcap_{k\in\mathbb{N}}\left((\Delta_{Y_\bullet}(D+1/k \cdot A)\cap (\{a\}\times \mathbb{R}^{n-1}) )\right)\\
&=\{a\}\times \left( \bigcap_{k\in \mathbb{N}}( \Delta_{{Y_1}_\bullet}({P_{a,1/k}}\vert_{Y_1})+\nu_{{Y_1}_\bullet}({N_{a,1/k}}\vert_{Y_1}))\right).
\end{align*}
where $P_{a,b}:=P(D+bA -a Y_1)$ and $N_{a,b}:=N(D+bA -a Y_1)$.

Let us now show that $\nu_{Y_\bullet}(N_{a,1/k}) \to \nu_{Y_\bullet}(N_a)$ as $k \to \infty$.
Indeed, there is an integer $K>0$ such that $K\cdot D_a+ A$ and $D$ lie in the same Zariski chamber.
Then, for each $k>K$, we have
\begin{align*}
N(kD_a+A)=N((k-K)D_a+(KD_a+A))=(k-K)N(D_a)+ N(KD_a+A).
\end{align*}
Dividing by $k$ and applying the valuation $\nu_{Y_\bullet}$ yields the result.

\noindent The fact that \begin{align*}
\bigcap_{k\in \mathbb{N}}\left(\Delta_{{Y_1}_\bullet}({P_{a,1/k}}\vert_{Y_1})\right)\subseteq \Delta^{num}_{{Y_1}_\bullet}({P_a}\vert_{Y_1}).
\end{align*}
follows by observing that ${P_{a,1/k}}\vert_{Y_1}$ converges to ${P_a}\vert_{Y_1}$ as $k\to \infty$.
Hence, the result follows. 
\end{proof}

\begin{thm}
Let $D$ be an effective divisor on a Bott-Samelson variety $X=X_w$, and $Y_\bullet$ the horizontal flag.
Then
\begin{align*}
\Delta^{num}_{Y_\bullet}(D)=\Delta^{val}_{Y_\bullet}(D).
\end{align*}
Moreover, we have 
\begin{align*}
\Delta_{Y_\bullet}(D)_{\nu_1=a}=\{a\}\times \left( \Delta_{{Y_1}_\bullet}({P_a}\vert_{Y_1})+\nu_{{Y_1}_\bullet}({N_a}\vert_{Y_1})\right)
\end{align*}
for all $a\in \mathbb{Q}$  such that $\Delta_{Y_\bullet}(D)_{\nu_1=a}\neq \emptyset$.
\end{thm}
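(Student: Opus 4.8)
The plan is to combine the two inclusions just proven (the valuative lower bound and the numerical upper bound) with a reverse inclusion relating the numerical body to the valuative body, which will yield a chain of containments that must collapse to equalities. The key observation is that one always has $\Delta^{val}_{Y_\bullet}(D) \subseteq \Delta^{num}_{Y_\bullet}(D)$: the valuative body is built from actual sections of multiples of $D$, whereas the numerical body is the fibre of the global Newton-Okounkov body, and every section of $\mathcal{O}_X(kD)$ contributes a point of the global semigroup $\Gamma_{Y_\bullet}(X)$ lying over $kD$, hence over $[D]$ after rescaling. So the generic containment $\Delta^{val} \subseteq \Delta^{num}$ holds for free, and the real content is the opposite containment, which we extract from the slice formulas.

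\textbf{Main argument.} I would run an induction on $n = \dim X_w$, exactly as in the proof of Theorem \ref{thmglobal}. For $n=1$ the variety is $\mathbb{P}^1$ and both bodies are the same interval, so the base case is immediate. For the inductive step, fix $a \in \mathbb{Q}$ with $\Delta_{Y_\bullet}^{num}(D)_{\nu_1=a} \neq \emptyset$. Applying the numerical upper-bound lemma gives
\begin{align*}
\Delta_{Y_\bullet}^{num}(D)_{\nu_1=a}\subseteq \{a\}\times \left( \Delta_{{Y_1}_\bullet}^{num}({P_a}\vert_{Y_1})+\nu_{{Y_1}_\bullet}({N_a}\vert_{Y_1})\right),
\end{align*}
and by the inductive hypothesis applied to the $(n-1)$-dimensional Bott-Samelson variety $Y_1 = E_n$ we may replace $\Delta^{num}_{{Y_1}_\bullet}({P_a}\vert_{Y_1})$ by $\Delta^{val}_{{Y_1}_\bullet}({P_a}\vert_{Y_1})$. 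On the other hand the valuative lower-bound lemma gives
\begin{align*}
\{a\}\times \left( \Delta^{val}_{{Y_1}_\bullet}({P_a}\vert_{Y_1})+ \nu_{{Y_1}_\bullet}({N_{a}}\vert_{Y_1})\right)\subseteq \Delta^{val}_{Y_\bullet}(D)_{\nu_1=a}.
\end{align*}
Chaining these with the free inclusion $\Delta^{val}_{Y_\bullet}(D)_{\nu_1=a} \subseteq \Delta^{num}_{Y_\bullet}(D)_{\nu_1=a}$ closes the loop, forcing equality at every stage. Since the slices $\{a\} \times \mathbb{R}^{n-1}$ over rational $a$ are dense in both bodies (both are closed convex sets, and the rational values of $a$ for which the slice is nonempty form a dense subset of the projection to the first coordinate), the slicewise equality upgrades to $\Delta^{num}_{Y_\bullet}(D) = \Delta^{val}_{Y_\bullet}(D)$. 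The displayed slice formula for $\Delta_{Y_\bullet}(D)_{\nu_1=a}$ then follows by substituting the now-common body into either of the two lemma inclusions, both of which become equalities.

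\textbf{Expected obstacle.} The routine part is the inductive bookkeeping; the point requiring care is justifying the passage from the slicewise identity to the global identity, i.e. that knowing $\Delta^{num}_{Y_\bullet}(D)_{\nu_1=a} = \Delta^{val}_{Y_\bullet}(D)_{\nu_1=a}$ for a dense set of rational $a$ determines each body. This needs that both $\Delta^{num}_{Y_\bullet}(D)$ and $\Delta^{val}_{Y_\bullet}(D)$ are closed, that their images under the first projection coincide, and that the ``nonempty slice'' condition cuts out the same set of $a$ on both sides — the last of which follows because a slice of $\Delta^{val}$ is nonempty precisely when $a \le $ the maximal $\nu_1$-value on sections, which by Proposition \ref{propsemigroup} (the fact that $Y_1 \not\subseteq \mathrm{supp}(N(D))$ whenever $\Gamma_{Y_\bullet}(D)_{\nu_1 = a} \ne \emptyset$) matches the corresponding bound for the numerical body. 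A secondary subtlety is the convergence argument ${P_{a,1/k}}\vert_{Y_1} \to {P_a}\vert_{Y_1}$ already invoked in the upper-bound lemma, which implicitly uses that $D$ and $KD_a + A$ share a Zariski chamber for $K \gg 0$; this is exactly where the explicit chamber description from the previous section does the heavy lifting, and it is worth flagging that finiteness of the chamber decomposition is what makes this uniform in $k$.
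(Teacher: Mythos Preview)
Your proposal is correct and follows essentially the same approach as the paper: both argue by induction on $\dim X_w$, chain the numerical upper-bound lemma with the valuative lower-bound lemma via the inductive hypothesis on $Y_1$, and close the loop with the free inclusion $\Delta^{val}\subseteq\Delta^{num}$. If anything you are more careful than the paper, which simply asserts ``This proves $\Delta_{Y_\bullet}^{num}(D)\subseteq \Delta_{Y_\bullet}^{val}(D)$'' after establishing the slicewise containment without explicitly addressing the slice-to-global passage you flag as an obstacle.
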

\begin{proof}

We prove this by induction over the dimension $n$ of $X_w$. For $n=1$ this is trivial since on the curve $X_w\cong \mathbb{P}^1$, the only effective non-big divisor is the zero divisor. 

Let us now suppose that equality holds for Bott-Samelson varieties of dimension $n-1$.
Let $D$ be an effective divisor on $X_w$ and $A$ and arbitrary ample class. Let $a\in \mathbb{Q}$ be such that $\Gamma_X(D)_{\nu_1=a}\neq \emptyset$.
Then we have 
\begin{align*}
\Delta_{Y_\bullet}^{num}(D)_{\nu_1=a} &\subseteq\{a\}\times \left( \Delta_{{Y_1}_\bullet}^{num}({P_a}\vert_{Y_1})+\nu_{{Y_1}_\bullet}({N_a}\vert_{Y_1})\right)\\
&=\{a\}\times \left( \Delta_{{Y_1}_\bullet}^{val}({P_a}\vert_{Y_1})+\nu_{{Y_1}_\bullet}({N_a}\vert_{Y_1})\right)\\
&\subseteq\Delta_{Y_\bullet}^{val}(D)_{\nu_1=a}.
\end{align*}
This proves $\Delta_{Y_\bullet}^{num}(D)\subseteq \Delta_{Y_\bullet}^{val}(D)$.
Since the reverse inclusion is always true, this proves the claim.
\end{proof}
\begin{cor}\label{corvalpoints}
Let $D$ be an effective divisor on $X_w$. Let $Y_\bullet$ be the horizontal flag and $b=(b_1,\dots,b_n)\in \Delta_{Y_\bullet}(D)$ be a
rational point.
Then there is an integer $k\in\mathbb{N}$ such that $k\cdot b \in \Gamma_{Y_\bullet}(D)_k$.
\end{cor}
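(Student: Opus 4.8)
The plan is to prove this by induction on $n=\dim X_w$, mirroring the structure of the preceding theorem and using Proposition \ref{propsemigroup} as the engine that descends from $X_w$ to $Y_1$. For $n=1$ the variety is $\mathbb{P}^1$, where the only effective non-big divisor is $0$ and for big divisors the claim is the standard fact that rational points of a Newton--Okounkov body of a finitely generated (indeed very ample up to scaling) divisor on a curve are represented by sections after clearing denominators; this is the base case. So assume the statement holds on every Bott--Samelson variety of dimension $n-1$, in particular on $Y_1=X_{w[1]}$ with its induced horizontal flag ${Y_1}_\bullet$.

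Now let $D$ be effective on $X_w$ and $b=(b_1,\dots,b_n)\in\Delta_{Y_\bullet}(D)$ a rational point. First I would reduce to the case $b\ne 0$ (if $b=0$, use $k=1$ and any nonzero section of some multiple, or note $D$ effective gives $0\in\Gamma$). Set $a:=b_1\in\mathbb{Q}_{\ge 0}$; since $b\in\Delta_{Y_\bullet}(D)$ the slice $\Delta_{Y_\bullet}(D)_{\nu_1=a}$ is nonempty, so by the last theorem of the excerpt we have
\begin{align*}
\Delta_{Y_\bullet}(D)_{\nu_1=a}=\{a\}\times\left(\Delta_{{Y_1}_\bullet}(P_a|_{Y_1})+\nu_{{Y_1}_\bullet}(N_a|_{Y_1})\right),
\end{align*}
where $P_a=P(D-aY_1)$ and $N_a=N(D-aY_1)$, and moreover $Y_1\not\subseteq\operatorname{supp}(N_a)$ by Proposition \ref{propsemigroup}. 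Hence $b'=(b_2,\dots,b_n)$ lies in $\Delta_{{Y_1}_\bullet}(P_a|_{Y_1})+\nu_{{Y_1}_\bullet}(N_a|_{Y_1})$. The translation vector $\nu_{{Y_1}_\bullet}(N_a|_{Y_1})$ is an integer vector (once we pass to an integer $\ell$ with $\ell N_a$ integral, it is $1/\ell$ times an integer vector), and $P_a|_{Y_1}$ is an effective $\mathbb{Q}$-divisor on the $(n-1)$-dimensional Bott--Samelson variety $Y_1$; so after subtracting the translation, $b'-\nu_{{Y_1}_\bullet}(N_a|_{Y_1})$ is a rational point of $\Delta_{{Y_1}_\bullet}(P_a|_{Y_1})$.

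Apply the induction hypothesis on $Y_1$: there is $m\in\mathbb{N}$ with $m\bigl(b'-\nu_{{Y_1}_\bullet}(N_a|_{Y_1})\bigr)\in\Gamma_{{Y_1}_\bullet}(P_a|_{Y_1})_m$, i.e. there is a section $\tilde s_P\in H^0(Y_1,\mathcal{O}_{Y_1}(mP_a))$ (enlarging $m$ so that $mP_a,mN_a,mD,ma$ are all integral) with $\nu_{{Y_1}_\bullet}(\tilde s_P)=m(b'-\nu_{{Y_1}_\bullet}(N_a|_{Y_1}))$. Now I run exactly the lifting construction from the ``$\supseteq$'' direction of Proposition \ref{propsemigroup}: since $P_a$ is nef (Zariski decompositions exist on $X_w$ by Theorem \ref{thma}, so the positive part is nef, and $mP_a$ is nef integral), Lemma \ref{lemcohomology} — applied along the flag, i.e. iterating the surjectivity $H^0(X_w,\mathcal{O}(mP_a))\twoheadrightarrow H^0(E_n,\mathcal{O}(mP_a))=H^0(Y_1,\mathcal{O}_{Y_1}(mP_a))$ — produces $s_P\in H^0(X_w,\mathcal{O}_{X_w}(mP_a))$ with $s_P|_{Y_1}=\tilde s_P$. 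Then set $s:=s_P\otimes s_N\otimes s_{Y_1}^{ma}\in H^0(X_w,\mathcal{O}_{X_w}(mD))$ where $s_N$ is the canonical section of $\mathcal{O}(mN_a)$ and $s_{Y_1}$ the canonical section cutting out $Y_1$; by equation \eqref{eqval} in Proposition \ref{propsemigroup}, $\nu_{Y_\bullet}(s)=(ma,\,\nu_{{Y_1}_\bullet}(s_P|_{Y_1})+m\nu_{{Y_1}_\bullet}(N_a|_{Y_1}))=(ma,mb')=mb$. Thus $mb\in\Gamma_{Y_\bullet}(D)_m$, which is the claim with $k=m$.

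The main obstacle I anticipate is purely bookkeeping of denominators: one must choose a single $k$ clearing denominators simultaneously for $a$, $P_a$, $N_a$, $D$, the translation vector $\nu_{{Y_1}_\bullet}(N_a|_{Y_1})$, and the integrality of the point coming out of the $(n-1)$-dimensional induction, and then confirm that multiplying $k$ further (which replaces the section by a power, or by a section of a larger multiple) keeps all the valuation identities linear and exact. There is a subtlety at $a=0$ versus $a>0$: the slice formula in the cited theorem is stated for all $a$ with nonempty slice, so no approximation argument is needed here, but one should make sure $P_0|_{Y_1}=(P(D))|_{Y_1}$ is genuinely the restriction of the positive part and that $Y_1\not\subseteq\operatorname{supp}(N_0)=\operatorname{supp}(N(D))$, which again is exactly the first claim proved inside Proposition \ref{propsemigroup}. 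Everything else is a direct transcription of the lifting argument already carried out there, now fed by the inductive hypothesis rather than by an arbitrary section on $Y_1$.
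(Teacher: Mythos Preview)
Your proof is correct and follows essentially the same approach as the paper: induction on $n$, use of the slice formula from the preceding theorem to descend to $Y_1$, the induction hypothesis on $Y_1$, and then Proposition~\ref{propsemigroup} to lift back to $X_w$. The only difference is cosmetic: where the paper simply invokes Proposition~\ref{propsemigroup} for the final step, you unpack its ``$\supseteq$'' direction explicitly via Lemma~\ref{lemcohomology}, and you spell out the denominator-clearing bookkeeping more carefully.
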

\begin{proof}
We prove this by induction on the dimension $n$ of $X_w$. If $n=1$, this means that $X_w=\mathbb{P}^1$, the statement is easy to check.

Let now $X_w$ be of dimension $n$ and suppose $(b_1,\dots,b_n)\in \Delta_{Y_\bullet}(D)$. Hence, 
\begin{align*}
(b_1,b_2,\dots,b_n)\in \Delta_{Y_\bullet}(D)_{\nu_1=b_1}=\{b_1\}\times \left( \Delta_{{Y_1}_\bullet}^{num/val}({P_{b_1}}\vert_{Y_1})+\nu_{{Y_1}_\bullet}({N_{b_1}}\vert_{Y_1})\right).
\end{align*}
Now, we can use the induction hypothesis to deduce that 
\begin{align*}
k\cdot(b_1,\dots,b_n)\in \{b_1k\}\times \left( \left(\Gamma_{{Y_1}_\bullet}({P_{b_1}}\vert_{Y_1})\right)_k + k\cdot \nu_{{Y_1}_\bullet}(({N_{b_1}}\vert_{Y_1})\right).
\end{align*}
But by Proposition \ref{propsemigroup}, we have 
\begin{align*}
k\cdot(b_1,\dots,b_n)\in \left(\Gamma_{Y_\bullet}(D)_{\nu_1=b_1}\right)_k\subseteq \Gamma_{Y_\bullet}(D).
\end{align*}
This proves the claim.
\end{proof}

\begin{thm}\label{thmglobalsem}
Let $X=X_w$ be a Bott-Samelson variety, $Y_\bullet$ the horizontal flag.
Then the global semigroup
\begin{align*}
\Gamma_{Y_\bullet}(X_w):= \{ (\nu_{Y_\bullet}(s), D) \ \vert \ D \in \Pic(X_w), s \in H^0(X_w,\mathcal{O}_{X_w}(D)) \}  
\end{align*}
is  finitely generated.
\end{thm}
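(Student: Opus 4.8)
The plan is to deduce the finite generation of the global semigroup $\Gamma_{Y_\bullet}(X_w)$ from the rational polyhedrality of the global Newton-Okounkov body established in Theorem \ref{thmglobal}, together with the pointwise lifting statement of Corollary \ref{corvalpoints} and an induction on $n = \dim X_w$. The point is that rational polyhedrality of $\Delta_{Y_\bullet}(X_w) = \overline{\text{Cone}(\Gamma_{Y_\bullet}(X_w))}$ gives us finitely many rational generators of this cone, but to conclude finite generation of the \emph{semigroup} one must control the lattice points of the cone, i.e., verify that $\Gamma_{Y_\bullet}(X_w)$ is (up to a finite correction) saturated in the cone it generates — or at least that every lattice point of the cone is a sum of boundedly many semigroup elements.

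First I would recall the slicing structure: by the final theorem of the previous subsection, for every effective (or $\Q$-effective) $D$ and every rational $a$ with $\Gamma_{Y_\bullet}(D)_{\nu_1=a}\neq\emptyset$ we have the fibration formula
\begin{align*}
\Delta_{Y_\bullet}(D)_{\nu_1=a}=\{a\}\times\bigl(\Delta_{{Y_1}_\bullet}({P_a}\vert_{Y_1})+\nu_{{Y_1}_\bullet}({N_a}\vert_{Y_1})\bigr),
\end{align*}
and on the semigroup level Proposition \ref{propsemigroup} gives the corresponding identity
\begin{align*}
\bigl(\Gamma_{Y_\bullet}(D)_{\nu_1=a}\bigr)_k=\{ak\}\times\bigl(\Gamma_{{Y_1}_\bullet}({P_a}\vert_{Y_1})_k+k\cdot\nu_{{Y_1}_\bullet}({N_a}\vert_{Y_1})\bigr).
\end{align*}
Globalizing in $D$, these show that $\Gamma_{Y_\bullet}(X_w)$ maps under the map $q$ of \eqref{E: linmapq}, fibrewise over the first coordinate $\nu_1$ and over $\Pic$, onto a translate of the corresponding semigroup $\Gamma_{{Y_1}_\bullet}(Y_1)$ for the $(n-1)$-dimensional Bott-Samelson variety $Y_1$; the translation vectors are $\nu_{{Y_1}_\bullet}({N_a}\vert_{Y_1})$ and $\nu_1=ak$, both of which depend piecewise-linearly on $(a,D)$ because Zariski decomposition is piecewise-linear on the (finitely many) Zariski/Mori chambers established in Section \ref{secprelim}'s successors.

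Next I would set up the induction. The base case $n=1$, $X_w\cong\P^1$, is immediate since $\Gamma_{Y_\bullet}(\P^1)$ is generated by the two obvious sections. For the inductive step, assume $\Gamma_{{Y_1}_\bullet}(Y_1)$ is finitely generated. Combine this with: (i) finitely many Mori/Zariski chambers in $\text{Eff}(X_w)$, on each of which $a\mapsto N_a$, $P_a$ are linear with rational coefficients; (ii) for each chamber, finitely many lifts via Lemma \ref{lemcohomology} of a finite generating set of the relevant nef semigroups on $Y_1$; (iii) the divisor $Y_1=E_n$ itself and the section $s_{Y_1}$, which accounts for the $\nu_1$-direction. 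Concretely: every element of $\Gamma_{Y_\bullet}(X_w)$ has the form $(\nu_{Y_\bullet}(s),D)$ with $a:=\nu_1(s)/k$ rational; dividing by $s_{Y_1}^{ak}$ and decomposing as in the proof of Proposition \ref{propsemigroup} writes it as $\{ak\}$-shifted combination of a $Y_1$-section for $P_a$ and the fixed section for $N_a$; the $Y_1$-section is in $\Gamma_{{Y_1}_\bullet}(Y_1)$, hence a finite sum of generators, each of which lifts to $X_w$ by Lemma \ref{lemcohomology}; and $N_a|_{Y_1}$, being a non-negative rational combination of the fixed classes $E_2,\dots,E_n$ from the $n-1$-dim'l effective basis, contributes its valuation, which lies in the monoid generated by the (finitely many) vectors $\nu_{{Y_1}_\bullet}(E_i|_{Y_1})$. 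Collecting, one gets an explicit finite generating set: the lifts of a finite generating set of $\Gamma_{{Y_1}_\bullet}(Y_1)$ together with $s_{Y_1}$ and the generating sections of the fixed divisors $E_2,\dots,E_n$. I would then use Corollary \ref{corvalpoints} to ensure that these candidate generators really reach every lattice point — i.e. that the rational-point statement upgrades to an integral-point statement with a uniform denominator, which is exactly what Corollary \ref{corvalpoints} provides together with Gordan-type normalization on each chamber.

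The main obstacle I expect is controlling the \emph{uniformity} of the construction across all infinitely many degrees $k$ and all infinitely many divisor classes $D$ simultaneously: the lifts from $Y_1$ via Lemma \ref{lemcohomology} are not canonical, the translation vectors $\nu_{{Y_1}_\bullet}(N_a|_{Y_1})$ vary with the chamber, and one must check that no "new" generators appear in high degree — i.e., that the semigroup is generated in bounded degree. This is where the rational polyhedrality of $\Delta_{Y_\bullet}(X_w)$ from Theorem \ref{thmglobal} does the essential work: it gives a finite set of rational cone generators, each clearable by a single bounded denominator; combined with the chamber-wise linearity of Zariski decomposition and Corollary \ref{corvalpoints}'s lifting of rational points of $\Delta_{Y_\bullet}(D)$ to lattice points of $\Gamma_{Y_\bullet}(D)$, one concludes via the standard fact that a semigroup whose rational cone is finitely generated and which contains all lattice points of that cone up to a bounded multiple is itself finitely generated (Gordan's lemma applied to the saturation, plus finitely many "defect" elements). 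Assembling these pieces carefully — rather than any single hard estimate — is the crux.
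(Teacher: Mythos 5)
Your proposal correctly identifies the two essential inputs — the rational polyhedrality of $\Delta_{Y_\bullet}(X_w)$ from Theorem \ref{thmglobal} and the ``multiple in the semigroup'' statement of Corollary \ref{corvalpoints} — and correctly senses that some Gordan-type lemma must close the deal. But the route you take is both more complicated than necessary and, at its final step, rests on an unverified hypothesis. You propose to re-run the entire inductive machinery of the preceding subsections (lifting generators from $Y_1$ via Lemma \ref{lemcohomology}, tracking the chamber-wise linear dependence of $P_a$, $N_a$, and so on) to assemble an explicit finite generating set, and you yourself flag the resulting uniformity problem as the ``main obstacle'' — but then you do not resolve it. Instead you fall back on the claim that a subsemigroup of $\mathbb{Z}^N$ whose cone is rational polyhedral and which ``contains all lattice points of that cone up to a \emph{bounded} multiple'' is finitely generated. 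That hypothesis is not what Corollary \ref{corvalpoints} gives you: it produces, for each rational point $b \in \Delta_{Y_\bullet}(D)$, \emph{some} $k$ with $kb \in \Gamma_{Y_\bullet}(D)_k$, with no uniform control over $k$. So the hypothesis of your ``standard fact'' is not established, and this is a genuine gap.

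The paper's proof shows that no uniformity is needed and that the inductive/explicit construction is superfluous. The only thing one needs to extract from Corollary \ref{corvalpoints} is that $\text{Cone}(\Gamma_{Y_\bullet}(X_w))$ is \emph{closed}: every rational point of the closed rational polyhedral cone $\Delta_{Y_\bullet}(X_w)$ has a positive multiple in $\Gamma_{Y_\bullet}(X_w)$, hence lies in $\text{Cone}(\Gamma_{Y_\bullet}(X_w))$, and since a rational polyhedral cone is generated by finitely many rational rays this forces $\text{Cone}(\Gamma_{Y_\bullet}(X_w)) = \Delta_{Y_\bullet}(X_w)$. (Observe that only the existence of \emph{some} multiple per rational point is used, so no bounded-denominator statement is required.) Once the cone of the semigroup is a finitely generated rational cone, \cite[Corollary 2.10]{BG09} gives directly that the submonoid of $\mathbb{Z}^{2n}$ is finitely generated — there is no need for bounded denominators, ``defect'' elements, induction on dimension, or explicit lifts of generators. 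I would encourage you to restate Gordan-type results carefully: the version you need here (closed rational polyhedral cone $\Rightarrow$ submonoid finitely generated) is precisely the cited result in Bruns--Gubeladze, and mismatching it with the ``bounded multiple'' variant is where your argument comes apart.
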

\begin{proof}
It was proved in Theorem \ref{thmglobal} that $\Delta_{Y_\bullet}(X_w)=\overline{\text{Cone}(\Gamma_{Y_\bullet}(X_w))}$ is rational polyhedral. First note that $\text{Cone}(\Gamma_{Y_\bullet}(X_w))$ is 
already closed: let  $(a,D)\in \Delta_{Y_\bullet}(X_w)$ be a rational point. This means that $a\in \Delta^{num}_{Y_\bullet}(D)$. Then, by Corollary \ref{corvalpoints} we can deduce that $ka\in \Gamma_{Y_\bullet}(D)_k$. 
But this clearly proves that $(ka,kD)\in \Gamma_{Y_\bullet}(X_w)$, and therefore
$(a,D)\in \text{Cone}(\Gamma_{Y_\bullet}(X_w))$.



It follows then from \cite[Corollary 2.10]{BG09} that $\Gamma_{Y_\bullet}(X)$ is finitely generated.

\end{proof}

We end this section with a sufficient condition for  the global semigroup $\Gamma_{Y_\bullet}(X)$ to be  normal, namely with the existence of an integral Zariski decomposition.
\begin{defin}
We say that $X$ admits  \emph{integral Zariski decompositions} if it admits a Zariski decomposition, and both the divisors $N(D)$ and $P(D)$ of an integral divisor $D$ are integral.
\end{defin}
\begin{prop} \label{propnormal}
Let $X_w$ be a Bott-Samelson variety that admits integral Zariski decompositions. Let  $Y_\bullet$ be the horizontal flag. Then $\Gamma_{Y_\bullet}(X_w)$ is a normal semigroup.
\end{prop}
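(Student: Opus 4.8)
The plan is to deduce normality of $\Gamma_{Y_\bullet}(X_w)$ from the explicit fibrewise description of the global Newton-Okounkov body together with the integrality of Zariski decompositions, proceeding by induction on $n = \dim X_w$. Recall that a finitely generated affine semigroup $\Gamma \subseteq \Z^N$ is normal precisely when it coincides with the intersection of its group $\Z\Gamma$ with the rational cone $\overline{\mathrm{Cone}(\Gamma)}$; since $\Gamma_{Y_\bullet}(X_w)$ is finitely generated by Theorem \ref{thmglobalsem}, it suffices to prove this saturation property. Concretely: given a lattice point $(v,D)$ in $\Z\Gamma_{Y_\bullet}(X_w)$ that also lies in $\overline{\mathrm{Cone}(\Gamma_{Y_\bullet}(X_w))} = \Delta_{Y_\bullet}(X_w)$, we must produce a section $s \in H^0(X_w,\mathcal{O}_{X_w}(D))$ with $\nu_{Y_\bullet}(s) = v$.

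First I would pin down the lattice $\Z\Gamma_{Y_\bullet}(X_w)$. Because $Y_n$ is a smooth point of each $Y_i$ and local coordinates can be chosen compatibly along the flag, one checks that the valuation $\nu_{Y_\bullet}$ surjects onto $\Z^n$ on sections of suitable divisors, so $\Z\Gamma_{Y_\bullet}(X_w) = \Z^n \oplus \Pic(X_w) = \Z^n \oplus \Z^n$ (using $\Pic(X_w)\cong\Z^n$). Hence the claim reduces to: every rational point $(v,D) \in \Delta_{Y_\bullet}(X_w)$ with $(v,D) \in \Z^n\oplus\Z^n$ actually lies in $\Gamma_{Y_\bullet}(X_w)$ — i.e. $\mathrm{Cone}(\Gamma_{Y_\bullet}(X_w))\cap(\Z^n\oplus\Z^n) = \Gamma_{Y_\bullet}(X_w)$. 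For a given integral $(v,D)$ in the cone, Corollary \ref{corvalpoints} already gives \emph{some} $k$ with $k\cdot v \in \Gamma_{Y_\bullet}(D)_k$; the content of normality is that we may take $k=1$. The key extra input is the integral Zariski decomposition: write $D = P(D) + N(D)$ with $P(D), N(D)$ integral. Since $\mathrm{Mov}(X_w)=\mathrm{Nef}(X_w)$ (Theorem \ref{thma}), $P(D)$ is base-point-free, so $\Gamma_{Y_\bullet}(P(D))$ is generated in degree one; and $N(D)$ contributes the fixed vector $\nu_{Y_\bullet}(s_{N(D)})$. One then mimics the argument of Proposition \ref{propsemigroup} at level $k=1$: decompose along $Y_1$, use that $Y_1 \not\subseteq \mathrm{supp}(N(D_a))$ where $a = \nu_1(v)$ (here $D_a = D - aY_1$, again with \emph{integral} $P_a, N_a$ by hypothesis), apply Lemma \ref{lemcohomology} to lift a section realizing the truncated vector $(v_2,\dots,v_n) - \nu_{{Y_1}_\bullet}(N_a|_{Y_1})$ from ${P_a}|_{Y_1}$ on $Y_1$ — which exists in degree one by the induction hypothesis applied to the $(n-1)$-dimensional Bott-Samelson variety $Y_1$, whose integral Zariski decompositions are inherited from those of $X_w$ — and multiply back by $s_{N_a}$ and $s_{Y_1}^{a}$.

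The base case $n=1$ ($X_w = \P^1$) is immediate. The main obstacle is the inductive lifting step: one must verify that the restriction $P_a|_{Y_1}$ is again nef on $Y_1$ (so that Lemma \ref{lemcohomology}, applied inside $Y_1$ along the flag ${Y_1}_\bullet$, applies) and that its Zariski-type data stays integral, so that the induction hypothesis on $Y_1$ delivers a degree-one section with the prescribed valuation vector; one also needs that restricting $N_a$ to $Y_1$ does not destroy integrality, which follows since $Y_1$ is a prime divisor not contained in $\mathrm{supp}(N_a)$ and $N_a$ is an integral effective combination of the $E_i$'s. Once these compatibilities are in place, the chain of equalities $\left(\Gamma_{Y_\bullet}(D)_{\nu_1=a}\right)_1 = \{a\}\times\left(\Gamma_{{Y_1}_\bullet}(P_a|_{Y_1})_1 + \nu_{{Y_1}_\bullet}(N_a|_{Y_1})\right)$ from Proposition \ref{propsemigroup} (now usable at $k=1$ thanks to integrality and base-point-freeness) closes the induction, and the saturation property — hence normality — follows.
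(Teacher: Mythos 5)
Your proposal is correct and follows essentially the same route as the paper: induction on $\dim X_w$, reduction to $Y_1$ via the fibrewise description of the value semigroup (Proposition~\ref{propsemigroup}), surjectivity of restriction for nef divisors (Lemma~\ref{lemcohomology}) to lift sections from $Y_1$, and the integral Zariski decomposition to make the intermediate divisors $P_a,N_a$ integral at level $k=1$. The only differences are cosmetic: the paper phrases normality as the ``no extraction of roots'' condition (if $m\gamma\in\Gamma$ then $\gamma\in\Gamma$) applied to each per-divisor semigroup $\Gamma_{Y_\bullet}(D)$ and then passes to the global semigroup, whereas you use the equivalent cone-saturation characterization for the global semigroup directly, which forces you to additionally identify $\Z\Gamma_{Y_\bullet}(X_w)=\Z^n\oplus\Z^n$ --- a correct but avoidable extra step; and where you gesture at $Y_1$ inheriting integral Zariski decompositions, the paper makes this precise via the section/retraction pair $\iota\colon Y_1\hookrightarrow X_w$ and $\pi\colon X_w\to Y_1$ with $\pi\circ\iota=\mathrm{id}$, showing $D'=\iota^*P(\pi^*D')+\iota^*N(\pi^*D')$ is the Zariski decomposition on $Y_1$, which you would still need to supply.
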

\begin{proof}
First of all, we prove that for an effective divisor $D$ the semigroup $\Gamma_{Y_\bullet}(D)$ is normal.
We proceed by induction on the dimension. 

Let $\dim X_w=1$. Then $X_w\cong \mathbb{P}^1$, and it is not difficult to see that $\Gamma_{P}(D)$ is a normal semigroup for every point $P\in X_w$.
Now let us suppose the claim holds in dimension $n-1$.
In order to use the induction hypothesis we need to prove that $Y_1$ admits integral Zariski decompositions.
Let $D$ be an effective divisor on $Y_1$. Let $\iota \colon Y_1\to X_w$ be the closed embedding of $Y_1$ into $X_w$, and
let $\pi\colon X_w\to Y_1=X_{w[1]}$ be the projection to the first $n-1$ coordinates. It then follows that $\text{id}=\pi\circ \iota$. We claim that
$D=\iota^*P(\pi^*D)+\iota^*N(\pi^*D)$ is the Zariski decomposition of $D$. This proves that $Y_1$ admits integral Zariski decompositions.

Since $\iota^*P(\pi^*D)$ is nef and $\iota^*N(\pi^*D)$ is effective, we have $P(D)\geq \iota^*P(\pi^*D)$.
Consider now $\pi^*D=\pi^*P(D)+\pi^*N(D)$. Again, since $\pi^*P(D)$ is nef and $\pi^*N(D)$ is effective, we conclude
$\pi^*P(D)\leq P(\pi^*D)$. Applying $\iota^*$ on both sides, we get $P(D)\leq \iota^*P(\pi^*D)$. This proves the claim
 
Now suppose that $((ma_1,\dots,ma_n),mk)\in \left(\Gamma_{X}(D)\right)_{mk}$ for a tuple of nonnegative integers $(a_1,\dots,a_n,k)\in\mathbb{N}^{n+1}$.
The divisor $mkD_{a_1/k}$ is integral. Since $X_w$ admits integral Zariski decompositions, the divisors $mkP_{a_1/k}$ and $mkN_{a_1/k}$ are integral as well. 
We can thus use Proposition \ref{propsemigroup} to deduce that 
\begin{align*}
((ma_2,\dots,ma_n), mk)\in \Gamma_{{Y_1}_\bullet }({P_{a_1/k}}\vert_{Y_1})_{mk}+mk\cdot \nu_{{Y_1}_\bullet}({N_{a_1/k}}\vert_{Y_1}).
\end{align*}
Put $(b_2,\dots,b_n):=k\cdot \nu_{{Y_1}_\bullet}({N_{a_1/k}}\vert_{Y_1})\in \mathbb{Z}^{n-1}$.
Then, 
\begin{align*}
m\cdot(a_2-b_2,\dots, a_n-b_n, k)\in \Gamma_{Y_1}({P_{a_1/k}}\vert_{Y_1})_{mk}.
\end{align*}
Hence, we can use the induction hypothesis for $Y_1$ and $k{P_{a_1/k}}\vert_{Y_1}$ to conclude that 
\begin{align*}
(a_2-b_2,\dots,a_n-b_n,k)\in\Gamma_{Y_1}({P_{a_1/k}}\vert_{Y_1})_k.
\end{align*}
We again use the fact that $X_w$ induces integral Zariski decompositions to deduce that $kP_{a_1/k}$ and $kN_{a_1/k}$ are integral.
Hence, we can use Proposition \ref{propsemigroup}  to deduce that $(a_1,\dots,a_n,k)\in  \Gamma_{Y_\bullet}(D)_{k}$.
This proves that $\Gamma_{Y_\bullet}(D)$ is a normal semigroup. But then it follows easily that $\Gamma_{Y_\bullet}(X_w)$ is normal.
\end{proof}

\subsection{Connection between the global semigroup and the Cox ring}
The finite generation of the global semigroup is connected to the finite generation of the Cox ring.
More precisely we have the following theorem.
\begin{thm}\label{thmcox}
Let $X$ be a  $\mathbb{Q}$-factorial variety with $N^1(X)=\Pic(X)$. Let $Y_\bullet$ be an admissible flag.
Suppose $\Gamma_{Y_\bullet}(X)$ is finitely generated by 
\begin{align*}
(\nu_{Y_\bullet}(s_1),D_1),\ldots, (\nu_{Y_\bullet}(s_N),D_n).
\end{align*}
 Then $X$ is a Mori dream space, and the Cox ring $\operatorname{Cox}(X)$ is generated by the sections $s_1,\dots, s_N$.
\end{thm}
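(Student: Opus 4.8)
The plan is to exhibit a natural surjection from a polynomial ring onto the Cox ring via the given semigroup generators, and to show this surjection is in fact an isomorphism using the valuation $\nu_{Y_\bullet}$. First I would recall that $\operatorname{Cox}(X)=\bigoplus_{D\in\Pic(X)}H^0(X,\mathcal{O}_X(D))$ carries a $\Pic(X)$-grading, and that since $N^1(X)=\Pic(X)$ and the semigroup $\Gamma_{Y_\bullet}(X)$ is generated by $(\nu_{Y_\bullet}(s_i),D_i)$ for $i=1,\dots,N$, every pair $(\nu_{Y_\bullet}(s),D)$ with $s\in H^0(X,\mathcal{O}_X(D))\setminus\{0\}$ can be written as a finite sum $\sum_i c_i(\nu_{Y_\bullet}(s_i),D_i)$ with $c_i\in\mathbb{N}$. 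In particular $D=\sum_i c_iD_i$ and $\nu_{Y_\bullet}(s)=\sum_i c_i\nu_{Y_\bullet}(s_i)$.

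**The inductive descent on valuation.** The key step is to show the $s_i$ generate $\operatorname{Cox}(X)$ as a $\mathbb{C}$-algebra. I would argue by induction on the (well-ordered, lexicographically ordered) value $\nu_{Y_\bullet}(s)$ for a nonzero homogeneous element $s\in H^0(X,\mathcal{O}_X(D))$. Given such an $s$, by finite generation of the semigroup there are $c_i\in\mathbb{N}$ with $\nu_{Y_\bullet}(s)=\sum_i c_i\nu_{Y_\bullet}(s_i)$ and $D=\sum_i c_iD_i$. Set $t:=\prod_i s_i^{c_i}$, a nonzero element of $H^0(X,\mathcal{O}_X(D))$ in the same graded piece. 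Since $\nu_{Y_\bullet}$ is a valuation-like function with one-dimensional leaves (this is the standard property of Okounkov valuations coming from an admissible flag: the associated graded has one-dimensional graded pieces), the equality $\nu_{Y_\bullet}(s)=\nu_{Y_\bullet}(t)$ forces the existence of $\lambda\in\mathbb{C}^\times$ with $\nu_{Y_\bullet}(s-\lambda t)>\nu_{Y_\bullet}(s)$ (or $s-\lambda t=0$). By the induction hypothesis $s-\lambda t$ lies in the subalgebra generated by the $s_i$, and hence so does $s$. This shows $\operatorname{Cox}(X)=\mathbb{C}[s_1,\dots,s_N]$, so $\operatorname{Cox}(X)$ is a finitely generated $\mathbb{C}$-algebra.

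**Deducing the Mori dream space property.** Once finite generation of the Cox ring is established, $X$ being a Mori dream space follows from the standard characterization: for a $\mathbb{Q}$-factorial variety with $N^1(X)=\Pic(X)$ of finite rank (which holds here since the semigroup being finitely generated forces $\Pic(X)$ to be finitely generated, hence of finite rank), finite generation of $\operatorname{Cox}(X)$ is equivalent to being a Mori dream space — this is \cite{hk}. I would also note one should check that the pseudoeffective cone is the image of the support of the semigroup under the second projection, which is rational polyhedral, and that the $\Pic(X)$-grading matches the Cox ring grading; these are routine given the hypotheses.

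**Main obstacle.** The technical heart is the descent step: one must know that $\nu_{Y_\bullet}$, extended to all of $\bigsqcup_D H^0(X,\mathcal{O}_X(D))$, genuinely behaves like a valuation — additive on products across different graded pieces ($\nu_{Y_\bullet}(s\cdot s')=\nu_{Y_\bullet}(s)+\nu_{Y_\bullet}(s')$) and with one-dimensional leaves — so that the difference $s-\lambda t$ can be arranged to have strictly larger value. For an admissible flag this multiplicativity and the one-dimensional leaf property are standard (cf. \cite{LM09}), but it is worth stating explicitly since the argument depends entirely on it. A secondary point requiring care is ensuring the well-ordering argument terminates: one uses that $\nu_{Y_\bullet}$ takes values in $\mathbb{Z}^n$ with a fixed total order refining the one used in the construction, and within a fixed graded piece $H^0(X,\mathcal{O}_X(D))$ only finitely many values occur, so the induction is legitimate.
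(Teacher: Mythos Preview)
Your proof is correct and follows essentially the same approach as the paper: both arguments use the one-dimensional leaves property of the flag valuation to show that the subalgebra generated by the $s_i$ fills out each graded piece $H^0(X,\mathcal{O}_X(D))$. The paper phrases this as a dimension count---producing $h^0(X,\mathcal{O}_X(D))$ monomials in the $s_i$ with pairwise distinct valuations, hence linearly independent by \cite[Proposition~2.3]{KK12}---while you phrase it as a valuation-descent, but the content is the same.
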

\begin{proof}
Let $R$ be the $\mathbb{C}$-algebra which is generated by the sections $s_1,\dots,s_N$.
Let $D$ be any effective divisor in $X$. Let $$k:=h^0(X,\mathcal{O}_X(D))=\vert \nu_{Y_\bullet}(H^0(X,\mathcal{O}_X(D))\setminus\{0\})\vert.$$
Since the $(\nu_{Y_\bullet}(s_1),D_1),\dots (\nu_{Y_\bullet}(s_N),D_n)$ generate $\Gamma_{Y_\bullet}(X)$, it follows that there are 
sections $f_1,\dots,f_k\in R\cap H^0(X,\mathcal{O}_X(D))\setminus\{0\}$ with distinct values, and it then follows from \cite[Proposition 2.3]{KK12}
that $f_1,\dots, f_k$ are linearly independent. This proves that they yield a basis form $H^0(X,\mathcal{O}_X(D))$ and that every section $s\in H^0(X,\mathcal{O}_X(D))$ belongs to the algebra $R$. This 
proves that $R\cong \operatorname{Cox}(D)$.
\end{proof}
In particular, the above theorem shows that $\Gamma_{Y_\bullet}(X)$ cannot be finitely generated unless $X$ is a Mori dream space.

\subsection{Newton-Okounkov bodies of Schubert varieties}
We can now use our results on Bott-Samelson varieties to deduce some consequences for Schubert varieties.

Let $P \subseteq G$ be any parabolic subgroup containing $B$, and let $w=(s_1,\ldots, s_n)$ be a reduced expression for which there is a birational morphism 
\begin{align*}
p\colon X_w\to Z_{\overline{w}}
\end{align*}
with $Z_{\overline{w}}$ denoting the Schubert variety corresponding to $\overline{w}:=s_1\cdots s_n$ in the partial flag variety $G/P$. In 
particular, one special case is $Z_{\overline{w}}=G/P$.

As $Z_{\overline{w}}$ is normal, for every effective divisor $D$ on $Z_{\overline{w}}$ we have 
$$
	H^0(Z_{\overline{w}},\mathcal{O}_{Z_{\overline{w}}}(D))\cong H^0(X_{w},p^*\mathcal{O}
	_{Z_{\overline{w}}}(D)).
$$
Hence, we can use the horizontal flag on $X_w$ to define a valuation
\begin{align*}
\nu_{Y_\bullet}\colon \bigsqcup_{D \in \mbox{Pic}(Z_{\overline{w}})} H^0(Z_{\overline{w}},\mathcal{O}_{Z_{\overline{w}}}(D))\setminus\{0\}\to \mathbb{N}^n
\end{align*}
and a corresponding (global) Newton-Okounkov body $\Delta_{Y_\bullet}(D)$ (resp. $\Delta_{Y_\bullet}(Z_{\overline{w}})$).

We can now use our previous findings to deduce the following.

\begin{thm}
Let $Z_{\overline{w}} \subseteq G/P$ be the Schubert variety for the reduced word $w$. Let $\nu_{Y_\bullet}$ be the above described 
valuation-like function on $Z_{\overline{w}}$. Then the global semigroup $\Gamma_{Y_\bullet}(Z_{\overline{w}})$ is finitely generated. 
Hence, $\Delta_{Y_\bullet}(Z_{\overline{w}})$ is rational polyhedral.\\
\noindent In particular, the global semigroup $\Gamma_{Y_\bullet}(G/P)$ for any partial flag variety $G/P$ is finitely generated.
\end{thm}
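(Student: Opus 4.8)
The plan is to deduce the statement directly from Theorem~\ref{thmglobalsem} by transporting everything along the birational morphism $p\colon X_w\to Z_{\overline w}$. The first step is to make the relation between the two global semigroups precise. Since $p$ is proper and birational and $Z_{\overline w}$ is normal, $p_*\mathcal O_{X_w}=\mathcal O_{Z_{\overline w}}$; in particular $p^*\colon\Pic(Z_{\overline w})\to\Pic(X_w)$ is injective, and for every $D\in\Pic(Z_{\overline w})$ pullback induces an isomorphism $H^0(Z_{\overline w},\mathcal O(D))\xrightarrow{\sim}H^0(X_w,p^*\mathcal O(D))$, which is exactly the identification already recorded before the theorem. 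By the very definition of $\nu_{Y_\bullet}$ on $Z_{\overline w}$ one has $\nu_{Y_\bullet}(s)=\nu_{Y_\bullet}(p^*s)$. Writing $\Lambda:=p^*\Pic(Z_{\overline w})\subseteq\Pic(X_w)=\mathbb Z^n$ and identifying $\Pic(Z_{\overline w})$ with $\Lambda$, these identifications give, as semigroups,
\begin{align*}
\Gamma_{Y_\bullet}(Z_{\overline w})=\Gamma_{Y_\bullet}(X_w)\cap\bigl(\mathbb N^n\times\Lambda\bigr),
\end{align*}
the intersection taking place inside $\mathbb N^n\times\Pic(X_w)$.

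The second step is a general semigroup fact: if $S\subseteq\mathbb Z^m$ is a finitely generated submonoid and $L\subseteq\mathbb Z^m$ is a subgroup, then $S\cap L$ is again finitely generated. One way to see this is to grade the affine $\mathbb C$-algebra $\mathbb C[S]$ by the finitely generated abelian group $Q:=\mathbb Z^m/L$; since the characters of $Q$ separate its points, the diagonalizable --- hence linearly reductive --- group $\mathcal D:=\operatorname{Hom}(Q,\mathbb C^{*})$ acts on $\operatorname{Spec}\mathbb C[S]$ with invariant ring $\mathbb C[S]^{\mathcal D}=\mathbb C[S\cap L]$, which is finitely generated by Hilbert's finiteness theorem; being $\mathbb Z^m$-graded with one-dimensional graded pieces indexed precisely by $S\cap L$, the monoid $S\cap L$ is finitely generated (alternatively one can quote this from \cite{BG09}). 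Applying this with $S=\Gamma_{Y_\bullet}(X_w)$, which is finitely generated by Theorem~\ref{thmglobalsem}, and $L=\mathbb Z^n\times\Lambda$ (note $\Gamma_{Y_\bullet}(X_w)\subseteq\mathbb N^n\times\mathbb Z^n$, so this intersection agrees with the one above), we conclude that $\Gamma_{Y_\bullet}(Z_{\overline w})$ is finitely generated. Consequently $\Delta_{Y_\bullet}(Z_{\overline w})=\overline{\operatorname{Cone}(\Gamma_{Y_\bullet}(Z_{\overline w}))}$ is the cone over a finite set of integral points, hence rational polyhedral.

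For the final assertion about $G/P$ it remains only to produce one reduced word $w$ for which $p\colon X_w\to G/P$ is birational: taking $w$ to be any reduced expression of the maximal-length element $w^P$ among the minimal-length representatives of $W/W_P$, one has $\ell(w^P)=\dim G/P$, so $Z_{\overline{w^P}}=G/P$ and $p$ is a Bott--Samelson resolution of it; thus $\Gamma_{Y_\bullet}(G/P)=\Gamma_{Y_\bullet}(Z_{\overline{w^P}})$ is finitely generated by the case just treated.

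The part I expect to require the most attention is the second step: verifying that intersecting the finitely generated semigroup $\Gamma_{Y_\bullet}(X_w)$ with the sublattice corresponding to $\Pic(Z_{\overline w})$ again yields a finitely generated semigroup. The subtlety is that $\Lambda$ need not be saturated in $\mathbb Z^n$, so one cannot simply intersect the rational polyhedral cone $\Delta_{Y_\bullet}(X_w)$ with a linear subspace and invoke Gordan's lemma; the invariant-theoretic argument above (or the corresponding statement in \cite{BG09}) is what handles this. Everything else is a routine translation through $p^{*}$ together with the normality of $Z_{\overline w}$ and Theorem~\ref{thmglobalsem}.
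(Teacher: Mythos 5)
Your proof is correct, but it follows a genuinely different route from the paper's. The paper identifies $\Gamma_{Y_\bullet}(Z_{\overline w})$ as the subset of $\Gamma_{Y_\bullet}(X_w)$ where the divisor component lies in the sub\emph{semigroup} $\Gamma(p^*D_1,\dots,p^*D_k)$ generated by the generators of $\operatorname{Eff}(Z_{\overline w})$, notes that $\Delta_{Y_\bullet}(Z_{\overline w})=\Delta_{Y_\bullet}(X_w)\cap(\R^n\times\operatorname{Cone}(p^*D_1,\dots,p^*D_k))$ is rational polyhedral, and then says that finite generation follows ``completely analogously to the proof of Theorem \ref{thmglobalsem}'' --- i.e., by re-running the closure argument of that proof (via Corollary \ref{corvalpoints}) in the Schubert setting, followed by a citation of \cite{BG09}. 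You instead describe $\Gamma_{Y_\bullet}(Z_{\overline w})$ as the intersection of $\Gamma_{Y_\bullet}(X_w)$ with the \emph{sublattice} $\mathbb Z^n\times p^*\Pic(Z_{\overline w})$ and invoke the purely monoid-theoretic fact that a finitely generated submonoid of $\mathbb Z^m$ intersected with a subgroup remains finitely generated, which you justify via Hilbert's finiteness theorem for the linearly reductive diagonalizable group $\operatorname{Hom}(\mathbb Z^m/L,\C^*)$. This buys you a clean black-box reduction to Theorem \ref{thmglobalsem} that does not require re-examining the Newton--Okounkov machinery (Corollary \ref{corvalpoints}, the closure of the cone, etc.) in the Schubert case, and your remark that Gordan's lemma alone is insufficient when $p^*\Pic(Z_{\overline w})$ is not saturated in $\Pic(X_w)$ correctly identifies the point where the naive cone-intersection argument would need care. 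The observation that the final $G/P$ assertion reduces to choosing a reduced word for the longest minimal-length coset representative is a detail the paper leaves implicit, and it is handled correctly.
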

\begin{proof}
Let $D_1,\dots, D_k$ be the generators of the effective cone $\operatorname{Eff}(Z_{\overline{w}})$. 
Then $\Delta_{Y_\bullet}(Z_{\overline{w}})=\Delta_{Y_\bullet}(X_w) \cap (\mathbb{R}^n\times \text{Cone}(p^*D_1,\dots,p^*D_n))$ is clearly rational polyhedral.
Furthermore, we have
\begin{align*}
\Gamma_{Y_\bullet}(Z_{\overline{w}})=\{(\nu_{Y_\bullet}(s),[D]) \ \vert \ D\in \Gamma(p^*D_1,\dots,p^*D_k), \ s\in H^0(X_w,\mathcal{O}_{X_w}(D)) \}.
\end{align*}

It follows then, completely analogously to the proof of Theorem \ref{thmglobalsem}, that $\Gamma_{Y_\bullet}(Z_{\overline{w}})$ is finitely generated.
\end{proof}

\begin{rem}
 In \cite{FFL}, Feigin, Fourier, and Littelmann show that partial flag varieties $G/P$ for the groups $SL_n, Sp_n$, and $G_2$ admit rational polyhedral 
 local Okounkov bodies with respect to a valuation defined in local coordinates. The global description of their valuation seems to us to amount to considering 
 the blow-up $Bl_x(X_w)$ at a point $x \in X_w$ of a Bott-Samelson resolution $X_w$ of $G/P$ and choosing a suitable linear flag in the projective space 
 $\mathbb{P}(T_x(X_w))$.
\end{rem}

\section{Example of a global Newton-Okounkov body}\label{s:ex2}

Let us consider the 3-dimensional incidence variety $Y$ from Section \ref{exthreedim} again. In this section we compute the global Newton-Okounkov 
body of $Y$ with respect to the horizontal flag as well as the global semigroup $\Gamma_{Y_\bullet}(Y)$.
The necessary computations were facilitated by the use of Sagemath.
\subsection{Integrality of the Zariski decomposition}
First of all, we note that $Y$ admits an integral Zariski decomposition. This can be deduced as follows.
It can be checked by hand that the three different triples of generators of the Mori chambers $(D_3,D_2,E_2)$, $(D_1,D_2,D_3)$ and $(D_3,E_3,E_1)$ each form a $\mathbb{Z}$-basis of $\Pic_{\mathbb{Z}}(X_w)$. Hence every integral effective divisor $D$ can be written as a $\mathbb{N}$-linear combination of the generators of its corresponding Mori chamber. But this induces  the Zariski decomposition of $D$, which proves that it is integral.

\subsection{Global Newton-Okounkov body of the surface $E_3$}

By Proposition \ref{propnormal}, we have $\Gamma_{Y_\bullet}(Y)=\text{Cone}(\Gamma_{Y_\bullet}(Y))\cap \mathbb{Z}^6$. It suffices therefore to compute $\Delta_{Y_\bullet}(Y)=\text{Cone}(\Gamma_{Y_\bullet}(Y))$ in order to determine $\Gamma_{Y_\bullet}(Y)$.
We start with computing the global Newton-Okounkov body of the surface $E_3$, with respect to the induced horizontal flag.
The divisor $E_3$ is isomorphic to the Blowup $X$ of $\mathbb{P}^2$ in one point. Since $E_2$ is an extremal ray of the effective cone which is not nef, it is the exceptional divisor. Since $D_1=E_1$ is a nef divisor which is an extremal divisor of the effective cone, it is linear equivalent to the strict transform of a line going through the blown up point. Furthermore, it follows that $D_2$ is linear equivalent to the pullback of a line of $\mathbb{P}^2$ to $X$. Hence, we get 
\begin{align*}
(E_1)^2=0 \quad (E_1\cdot D_2)=1 \quad  (E_2\cdot D_2)=0 \quad (E_1\cdot E_2)=1.
\end{align*}
Now, it follows with the help of \cite{SS16} that $\Delta_{{Y_1}_\bullet}(Y_1)$ is generated by the following vectors
\begin{align*}
(1,0,E_2), \ (0,0,D_1), \ (0,0,D_2), \ (0,1,D_1).
\end{align*}

\subsection{Global Newton-Okounkov body of $Y$}
In order to compute $C(S_1):=\text{Cone}(S_1(D_1,\dots,D_3))$, we need to intersect $\Delta_{{Y_1}_\bullet}(Y_1)$ with $\mathbb{R}^2\times\text{Cone}(\{D_1,D_2)\}$.
A computation yields to the following generators of $C(S_1)$
\begin{align*}
(0,0,D_1), \ (0,0,D_2), \ (0,1,D_1), \ (1,0,D_2),\ (1,1,D_2).
\end{align*}
Choosing the basis $E_1,E_2,E_3$ for $N^1(Y)$, the following are the defining inequalities of $C(S_1)$
\begin{align*}
-x_1+x_4\geq 0, \ x_1\geq 0,\ x_3-x_4\geq 0,\\ x_2\geq 0,\ x_1-x_2+x_3-x_4\geq 0.
\end{align*}
Now we consider the restriction morphism $q\colon \text{Cone}(S(D_1,D_2,D_3))\to C(S_1)$ which induces a linear morphism on the corresponding linear spaces.
By realizing that ${E_3}_{\vert E_3}=E_1-E_2$, the morphism $q$ can be written as
\begin{align*}
q\colon \mathbb{R}^6\to \mathbb{R}^4 \quad (a_1,\dots, a_6)\mapsto (a_2,a_3,a_4+a_6,a_5-a_6).
\end{align*}
Hence, the defining inequalities of $q^{-1}(C(S_1))$ are given by
\begin{align*}
-x_2+x_5-x_6\geq 0, \ x_2\geq 0, \ x_4-x_5+2\cdot x_6\geq 0, \\ x_3\geq 0, \ x_2-x_3+x_4-x_5+2x_6 \geq 0.
\end{align*}

The set
$\text{Cone}(S(D_1,D_2,D_3)$ is given by $q^{-1}(C(S_1))\cap \left( \{0\}\times \text{Cone}(D_1,D_2,D_3)\right)$.
The ray generators are then computed as
\begin{align*} 
&(0, 0, 0, D_3), \ (0, 0, 0, D_1), \ (0, 0, 0, D_2), \ (0, 0, 1,D_3),\\
& (0, 0, 1, D_1), \ (0, 1, 0,D_2), \ (0, 1, 1, D_2). 
\end{align*}

In order to obtain the generators of $\Delta_{Y_\bullet}(Y)$ we simply need to add the ray $(1,0,0,E_3)$ as well as $(0,1,0,E_2)$.
This yields to the following minimal set of generators of $\Delta_{Y_\bullet}(Y)$:
\begin{align*}
&(0, 0, 0,D_3), \ (0, 0, 0, D_1), \ (0, 0, 0,D_2), \ (0, 0, 1, D_3),\\  
& (0, 0, 1, D_1), \ (0, 1, 0, E_2), \ (1, 0, 0, E_3).
\end{align*}

\subsection{Generators of the global semigroup/Cox ring}

In fact, the above generators of $\Delta_{Y_\bullet}(Y)$ are actually a Hilbert basis and consequently they generate the global semigroup $\Gamma_{Y_\bullet}(Y)$.
Furthermore, we can use the above generator and Theorem \ref{thmcox} to deduce that the following sections generate the Cox ring $\operatorname{Cox}(Y)$
\begin{itemize}
\item a general section of $D_i$ for $i=1,2,3$
\item the generating section of $E_i$ for $i=2,3$
\item a section of $D_i$ for $i=1,3$ which vanishes exactly once at the chosen point of the horizontal flag and not on $E_3\cap E_2$.
\end{itemize}

\subsection{Connections to the flag variety $Fl(\mathbb{C}^3)$}
The birational morphism $p\colon X_w\to Y_{\overline{w}}$ in our example is the following:
\begin{align*}
p\colon Y\to Fl(\mathbb{C}^3)\quad  (V_1,V_2,V_2^\prime)\mapsto (V_1,V_2^\prime).
\end{align*}
The divisors $D_2=\det (\mathcal{V}_1)^*$ and $D_3=\det(\mathcal{V}_2^\prime)^*$ generate the effective cone of $Fl(\mathbb{C}^3)$.
Hence, we can compute the global Newton-Okounkov body as 
\begin{align*}
\Delta_{Y_\bullet}(Fl(\mathbb{C}^3))= \Delta_{Y_\bullet}(Y) \cap \left( \mathbb{R}^3\times \text{Cone}(D_2,D_3)\right).
\end{align*}
A computation shows that the following points are generators of the global Newton-Okounkov body $\Delta_{Y_\bullet}(Fl(\mathbb{C}^3)$:
\begin{align*}
 &(0, 0, 0, D_2),\ (0, 0, 0, D_3), \ (0, 1, 1, D_2), \ (0, 0, 1, D_3),\\
  &(1, 1, 0, D_3), \ (0, 1, 0, D_2).
\end{align*}
As in the previous case for $Y$ it turns out that the above generators are indeed a Hilbert basis for the global semigroup $\Gamma_{Y_\bullet}(Y_{\overline{w}})$.
Moreover, with the help of Theorem \ref{thmcox}, we can deduce that the following sections generate the Cox ring $\operatorname{Cox}(Fl(\mathbb{C}^3))$:
\begin{itemize}
\item A general section $s_i$ of $D_i$ for $i=2,3$.
\item A section $s_3^\prime$ of $D_3$ which vanishes exactly once at a fixed point and not on $C=E_2\cap E_3$.
\item The section of  $s_{E_2}\otimes s_{E_3}$ in $D_3$ where $s_{E_i}$ is a defining section of $E_i$ for $i=2,3$.
\item A section of the form $s_{E_2}\otimes s^\prime_1$ where $s^\prime_1$ is a section in $D_1$ which vanishes exactly once at a fixed point and not on $C=E_2\cap E_3$.
\end{itemize}

\subsection{Computation of an example for a (local) Newton-Okounkov body}
We can now use the description of the global Newton-Okounkov body to compute Newton-Okounkov bodies corresponding to special divisors.
Let us, for example, fix the divisor $D=D_1+D_2+D_3$.
Then $\Delta_{Y_\bullet}(D)=\Delta_{Y_\bullet}(Y)\cap \left( \mathbb{R}^3\times \{D\}\right)$.
A computation shows that the above Newton-Okounkov body is the convex hull of the vertices
\begin{align*}
&(1, 0, 0), \ (1, 2, 0), \ (1, 2, 2),\ (0, 1, 3),\\ &(0, 1, 0),\ (0, 0, 2),\ (0, 0, 0).
\end{align*}
This Newton-Okounkov body has been computed in \cite[Example 4.1]{HY} by different methods. 
Since $\Gamma_{Y_\bullet}(D)$ is normal, we can compute 
the Hilbert polynomial of $D$ as the Ehrhart polynomial of this polytope.
Note that the Ehrhart polynomial $P$  of a lattice polytope $\Delta\subset \mathbb{R}^d$ is the polynomial function given on integers $k\in\mathbb{N}$ by:
\begin{align*}
P(k)=\# \left(k\Delta\cap \mathbb{Z}^d\right). 
\end{align*}
It is given by
\begin{align*}
P_D(t)=5/2 t^3+11/2t^2+4t+1.
\end{align*}
\begin{figure}
\center
\includegraphics[scale=1]{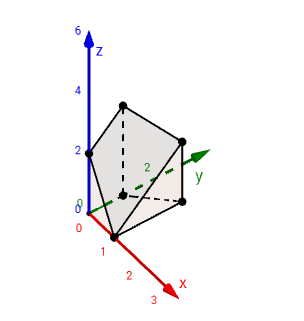}
\caption{Newton-Okounkov body of $D=D_1+D_2+D_3$}
\end{figure}

\section{Open problems and conjectures}
We end this article with some open questions and conjectures.

\subsection{Equality of moving cone and nef cone}
One of the main reasons which lead us to nice characterizations of the Mori chambers and Newton-Okounkov bodies was the fact that we have a Zariski decomposition on Bott Samelson varieties $X_w$. This 
is a consequence of the fact that $\text{Mov}(X)=\text{Nef}(X)$. Besides for surfaces there are not many varieties known which have this property. Therefore, we raise the following question.
\begin{question}
For which varieties $X$ do the cones  $\text{Mov}(X)$ and $\text{Nef}(X)$ agree?
\end{question}

\subsection{Finite generation of the global semigroup}
We have seen that the finite generation of the global semigroup $\Gamma_{Y_\bullet}(X)$ was quite restrictive, i.e. we really needed a lot of nice properties (like existence of Zariski decomposition, vanishing of cohomology) in order to establish this result.
However, it was proven in \ 
in \cite{PU16}, that for a Mori dream space $X$, there always exist a flag $Y_\bullet$ such that $\Delta_{Y_\bullet}(X)$ is rational polyhedral. It was also proven in this article that for an ample divisor $A$, the semigroup $\Gamma_{Y_\bullet}(A)$ is finitely generated.
We can now pose the following problem.
\begin{question}
Let $X$ be a Mori dream space. Does there always exist a flag $Y_\bullet$ such that the corresponding semigroup $\Gamma_{Y_\bullet}(X)$ is finitely generated?
\end{question}

\subsection{Toric degenerations}
As the finite generation of the global semigroup $\Gamma_{Y_\bullet}(X)$ is an interesting question per se, we believe that, in analogy to the fact that the finite generation of the semigroups $\Gamma_{Y_\bullet}(D)$ 
induce toric degenerations  of $X$ to $\Delta_{Y_\bullet}(D)$ (\cite{A13}), the following holds.
\begin{conjecture}
Let $X$ be a Mori dream space.
Then the finite generation of $\Gamma_{Y_\bullet}(X)$ induces a degeneration of $\text{Spec}(\text{Cox}(X))$, which is compatible with the toric degenerations of $\Gamma_{Y_\bullet}(D)$, considered in \cite{A13}.
\end{conjecture}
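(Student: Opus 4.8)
The plan is to realise the conjectural degeneration concretely as the Rees-algebra (Gr\"obner-type) degeneration attached to the $\Pic(X)$-homogeneous valuation $\nu=\nu_{Y_\bullet}$ on the Cox ring, and then to recover all of the toric degenerations of \cite{A13} as quotients of this one family by the N\'eron-Severi torus. The first step is to extend $\nu_{Y_\bullet}$ to a valuation on the $\Pic(X)$-graded ring $A:=\Cox(X)=\bigoplus_{D}H^0(X,\mathcal O_X(D))$. On each homogeneous component $\nu_{Y_\bullet}$ is already given, and the usual properties of a flag valuation (additivity on products, the ultrametric inequality on sums) show that $\nu$ is a valuation on all of $A$. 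The decisive point is that $\nu$ has \emph{one-dimensional leaves}: after fixing a term order on $\Z^n$, for every $(v,D)$ the quotient $\{s\in H^0(\mathcal O_X(D)):\nu(s)\geq v\}/\{s:\nu(s)>v\}$ is at most one-dimensional; this is exactly the linear independence statement applied in the proof of Theorem \ref{thmcox} via \cite[Proposition 2.3]{KK12}. Consequently $\operatorname{gr}_\nu A\cong\C[\Gamma_{Y_\bullet}(X)]$ as $\Pic(X)$-graded $\C$-algebras.

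The second step is to produce an actual one-parameter flat family. Since $\Gamma_{Y_\bullet}(X)$ is finitely generated, Theorem \ref{thmcox} lets us pick generators $(\nu(s_i),D_i)$ of $\Gamma_{Y_\bullet}(X)$ whose sections $s_1,\dots,s_N$ generate $A$; let $I\subseteq\C[x_1,\dots,x_N]$ be the kernel of $x_i\mapsto s_i$ and let $I_\Gamma$ be the lattice ideal presenting $\C[\Gamma_{Y_\bullet}(X)]$ on the same generators. Because $\Eff(X)$ is a pointed cone and $\nu$ is nonnegative on sections, $\Gamma_{Y_\bullet}(X)$ lies in a pointed cone, so the chosen term order on $\Z^n\times\Pic(X)$ is refined by an integral weight $w$ on $\C[x_1,\dots,x_N]$; the fact that the $(\nu(s_i),D_i)$ generate $\Gamma_{Y_\bullet}(X)$ means precisely that the $s_i$ form a Khovanskii basis for $\nu$, and hence $\operatorname{in}_w(I)=I_\Gamma$. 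The $w$-homogenisation $\widetilde I\subseteq\C[x_1,\dots,x_N,t]$ then defines a flat $\C[t]$-algebra $\mathcal R$ with $\mathcal R/(t-1)\cong A$ and $\mathcal R/(t)\cong\C[\Gamma_{Y_\bullet}(X)]$, i.e. a flat family $\mathfrak X\to\mathbb A^1$ with $\mathfrak X_1=\spec\Cox(X)$ and $\mathfrak X_0=\spec\C[\Gamma_{Y_\bullet}(X)]$, an affine (in general non-normal, cf. Proposition \ref{propnormal}) toric variety.

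The third step is to check compatibility with \cite{A13}. Since $\nu$ is $\Pic(X)$-homogeneous, $\widetilde I$ is $\Pic(X)$-homogeneous as well, so the N\'eron-Severi torus $T:=\mathrm{Hom}(\Pic(X),\C^*)$ acts on $\mathfrak X$ over $\mathbb A^1$, restricting on $\mathfrak X_1$ to the usual $T$-action on $\spec\Cox(X)$ and on $\mathfrak X_0$ to the torus action on $\spec\C[\Gamma_{Y_\bullet}(X)]$. For a big divisor $D$ with associated GIT linearisation $\theta$ one has $\proj R(X,D)=\spec\Cox(X)/\!\!/_\theta T$, and $\nu$ restricted to $R(X,D)=\bigoplus_kH^0(X,\mathcal O_X(kD))$ has value semigroup $\Gamma_{Y_\bullet}(D)$; taking the relative GIT quotient $\mathfrak X/\!\!/_\theta T\to\mathbb A^1$ therefore yields exactly the degeneration of \cite{A13} of $X$ to $\proj\C[\Gamma_{Y_\bullet}(D)]$, whose normalisation has Newton-Okounkov body $\Delta_{Y_\bullet}(D)$. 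As $D$ and $\theta$ range over all choices, all of these degenerations are cut out of the single family $\mathfrak X\to\mathbb A^1$, which is the asserted compatibility.

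The main obstacle is the existence of one integral weight $w$ with $\operatorname{in}_w(I)=I_\Gamma$: one must know that a finite generating set of $\Gamma_{Y_\bullet}(X)$ really lifts to a Khovanskii basis of $\Cox(X)$ for the multigraded valuation $\nu$ and that this yields a genuine one-parameter flat family rather than merely a multiparameter or formal one. The subtlety is that $\Z^n\times\Pic(X)$ is not $\N$-graded, so the term order and the auxiliary $\N$-weight must be chosen compatibly, using the pointedness of $\Eff(X)$ and the nonnegativity of the flag valuation on effective sections. A second, softer point is carrying the GIT-quotient argument through in families: one needs flatness of the relative quotient and openness of $\theta$-semistability across $t=0$, which should follow from $T$-equivariance and properness of the quotient but has to be verified chamber by chamber.
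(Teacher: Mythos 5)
The statement you address is posed in the paper only as a conjecture in the closing section; the paper contains no proof, so there is no argument of the authors to compare yours against. Your strategy --- extend $\nu_{Y_\bullet}$ to a full-rank valuation on $\Cox(X)$, use finite generation of $\Gamma_{Y_\bullet}(X)$ to produce a Khovanskii basis and hence a Rees/Gr\"obner degeneration of $\spec \Cox(X)$ to $\spec\C[\Gamma_{Y_\bullet}(X)]$, and recover the degenerations of \cite{A13} as relative GIT quotients by the N\'eron--Severi torus --- is the natural one and is the route a proof would be expected to take. As written, however, it is a programme rather than a proof: the two points you yourself defer to the end are precisely where the mathematical content of the conjecture lies, and there is a third issue you do not raise at all.

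Concretely: (1) you must first make $\nu_{Y_\bullet}$ multiplicative \emph{across} different $\Pic$-degrees. The flag valuation on $H^0(X,\mathcal O_X(D))$ is defined via local trivializations, and the identity $\nu(s\otimes s')=\nu(s)+\nu(s')$ for sections of different line bundles requires fixing a group section $\Pic(X)\to \operatorname{Div}(X)$ and compatible local equations along $Y_\bullet$; for a general Mori dream space the grading group of the Cox ring is only finitely generated and may have torsion, so even the existence of such a section is not automatic. (2) The equality $\operatorname{in}_w(I)=I_\Gamma$ for a single integral weight $w$ is not a formality. One-dimensional leaves (via \cite[Proposition 2.3]{KK12}, as in Theorem \ref{thmcox}) do give $\operatorname{gr}_\nu A\cong\C[\Gamma_{Y_\bullet}(X)]$ and show that the $s_i$ are a Khovanskii basis; but collapsing the $(\Z^n\times\Pic(X))$-filtration to an $\N$-filtration requires a weight that is strictly positive on the generators \emph{and} computes the same initial ideal, which needs pointedness of $\R^n_{\geq0}\times\Eff(X)$ together with a finite Gr\"obner-type verification on a generating set of $I$ that you do not carry out. (3) For compatibility you need: the identification of the degree-$\N D$ Veronese subsemigroup of $\Gamma_{Y_\bullet}(X)$ with $\Gamma_{Y_\bullet}(D)$ (true by construction, but it is also needed to know that each $\Gamma_{Y_\bullet}(D)$ is itself finitely generated, so that Anderson's construction applies at all); flatness of the $T$-invariants over $\mathbb{A}^1$, which should be deduced from the constancy of the Hilbert function $\dim\C[\Gamma_{Y_\bullet}(X)]_{kD}=h^0(X,\mathcal O_X(kD))$; and the semistability statement on the special fibre, which you only assert. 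None of these obstacles looks fatal --- (2) and (3) are essentially the technology of \cite{A13} and of Kaveh--Manon --- but until they are carried out the argument does not establish the conjecture.
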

\subsection{Normality of the semigroup}
We have seen in Proposition \ref{propnormal} that the normality of the semigroup $\Gamma_{Y_\bullet}(X_w)$ is connected to the existence of  integral Zariski decompositions. We have also seen in the last 
section that for our example the variety $Y$ induces integral Zariski decompositions. It is also not difficult to prove that the four dimensional example $X$ from Section \ref{fourdim}, induces integral Zariski decompositions.
It is now natural to ask the following question.
\begin{question}
Under which circumstances does the Bott-Samelson variety $X_w$ induce integral Zariski decompositions?
\end{question}



\end{document}